\LetLtxMacro\amsproof\proof                      
\LetLtxMacro\amsendproof\endproof                
  \LetLtxMacro\proof\amsproof                    %
  \LetLtxMacro\endproof\amsendproof              %
\newcommand{\MyThmtoolsConstructor}[3]{
  \declaretheorem[
  numberlike=COUNTERHACK,
  numbered=yes,
  name=#2,
  Refname={#2}, 
  refname={\MakeLowercase{#2}},  
  shaded={bgcolor=#3, margin=0pt, padding=5pt},
  ]{#1}
  \declaretheorem[
  numbered=no,
  name=Informal #2,
  Refname={#2}, 
  refname={\MakeLowercase{#2}},  
  ]{#1-informal}
  \declaretheorem[
  numbered=no,
  name=#2,
  Refname={#2}, 
  refname={\MakeLowercase{#2}},  
  ]{#1*}
  \declaretheorem[
  numbered=yes,
  numberlike=COUNTERHACK,
  name=#2,
  Refname={#2}, 
  refname={\MakeLowercase{#2}},  
  style=plain,
  shaded={
    rulecolor=Black,  
    rulewidth=0.5pt, 
    margin=5pt, 
    bgcolor=white
    }
  ]{#1-known}
}
\newcommand{\MySubThmtoolsConstructor}[3]{
  \declaretheorem[
  numberlike=COUNTERHACKSUB,
  numbered=yes,
  name=#2,
  Refname={#2}, 
  refname={\MakeLowercase{#2}},  
  ]{#1}
  \declaretheorem[
  numbered=no,
  name=Informal #2,
  Refname={#2}, 
  refname={\MakeLowercase{#2}},  
  ]{#1-informal}
  \declaretheorem[
  numbered=no,
  name=#2,
  Refname={#2}, 
  refname={\MakeLowercase{#2}},  
  ]{#1*}
}
\newcommand{\MyThmtoolsConstructorRemarkNoBackground}[3]{
  \declaretheorem[
  style=remark,
  numberlike=COUNTERHACK,
  numbered=yes,
  name=#2,
  Refname={#2}, 
  refname={\MakeLowercase{#2}},  
  ]{#1}
  \declaretheorem[
  style=remark,
  numbered=no,
  name=Informal #2,
  Refname={#2}, 
  refname={\MakeLowercase{#2}},  
       shaded={bgcolor=Gray!10, margin=0pt, padding=5pt},
  ]{#1-informal}
  \declaretheorem[
  style=remark,
  numbered=no,
  name=#2,
  Refname={#2}, 
  refname={\MakeLowercase{#2}},  
       shaded={bgcolor=Gray!35, margin=0pt, padding=5pt},
  ]{#1*}
  \declaretheorem[
  style=remark,
  numberlike=COUNTERHACK,
  numbered=yes,
  name=#2,
  Refname={#2}, 
  refname={\MakeLowercase{#2}},  
  ]{#1-breakable}
  \declaretheorem[
  style=remark,
  numbered=no,
  name=Informal #2,
  Refname={#2}, 
  refname={\MakeLowercase{#2}},  
  ]{#1-informal-breakable}
  \declaretheorem[
  style=remark,
  numbered=no,
  name=#2,
  Refname={#2}, 
  refname={\MakeLowercase{#2}},  
  ]{#1-breakable*}
}
\declaretheorem[
    name=Note,
    numbered=no,
    style=remark,
    shaded={
        rulecolor=NavyBlue,  
        rulewidth=1pt, 
        margin=5pt, 
        bgcolor=white
    }
]{readernote}
\newcommand*{\subclaimproofname}{Proof of Claim}
\newenvironment{subclaimproof}[1][\subclaimproofname]{\begin{proof}[#1]}{\end{proof}}
\newcommand{\clos}{\overline}
\newcommand{\N}{\mathbb{N}}     
\renewcommand{\R}{\mathbb{R}}     
\renewcommand{\P}{\mathcal{P}}  
\renewcommand{\C}{\mathcal{C}}  
\renewcommand{\A}{\mathcal{A}}
\newcommand{\ballo}[2]{B^{\circ}(#1,#2)}
\newcommand{\ballc}[2]{\clos{B}(#1,#2)}
\newcommand{\ballon}[3]{B^{\circ}_{#3}(#1,#2)} 
\newcommand{\ballcn}[3]{\clos{B}_{#3}(#1,#2)} 
\newcommand{\balloinf}[2]{\ballon{#1}{#2}{\infty}} 
\newcommand{\ballcinf}[2]{\ballcn{#1}{#2}{\infty}} 
\newcommand{\ballostd}[2]{\ballon{#1}{#2}{{\scriptscriptstyle\norm{\cdot}}}} 
\newcommand{\ballcstd}[2]{\ballcn{#1}{#2}{{\scriptscriptstyle\norm{\cdot}}}} 
\newcommand{\unitballmeasurestd}{v_{{\scriptscriptstyle \norm{\cdot}},d}}
\newcommand{\unitballmeasureinf}{v_{{\scriptscriptstyle \norm{\cdot}_\infty},d}}
\DeclareMathOperator{\range}{range}
\DeclareMathOperator{\defeq}{\overset{def}{=}}
\DeclarePairedDelimiter{\ceil}{\lceil}{\rceil}
\DeclarePairedDelimiter\floor{\lfloor}{\rfloor}
\DeclarePairedDelimiter\brackets{[}{]}
\DeclarePairedDelimiter\set{\lbrace}{\rbrace}
\DeclarePairedDelimiter\abs{\lvert}{\rvert}
\DeclarePairedDelimiter\norm{\lVert}{\rVert}
\let\oldabs\abs
\def\abs{\@ifstar{\oldabs}{\oldabs*}}
\let\oldnorm\norm
\def\norm{\@ifstar{\oldnorm}{\oldnorm*}}
\let\oldceil\ceil
\def\ceil{\@ifstar{\oldceil}{\oldceil*}}
\let\oldfloor\floor
\def\floor{\@ifstar{\oldfloor}{\oldfloor*}}
\let\oldbrackets\brackets
\def\brackets{\@ifstar{\oldbrackets}{\oldbrackets*}}
\newcommand*{\numberfullref}[1]{\hyperref[{#1}]{\Autoref*{#1} (\nameref*{#1})}}
\newcommand*{\namefullref}[1]{\hyperref[{#1}]{\nameref*{#1} (\Autoref*{#1})}}
\renewcommand{\epsilon}{\varepsilon}
\title{Neighborhood Variants of the KKM Lemma,\\Lebesgue Covering Theorem, and Sperner's Lemma on the Cube}
\begin{document}
\maketitle

\begin{abstract}
We establish a ``neighborhood'' variant of the cubical KKM lemma and the Lebesgue covering theorem and deduce a discretized version which is a ``neighborhood'' variant of Sperner's lemma on the cube. The main result is the following: for any coloring of the unit $d$-cube $[0,1]^d$ in which points on opposite faces must be given different colors, and for any $\epsilon>0$, there is an $\ell_\infty$ $\epsilon$-ball which contains points of at least $(1+\frac{\epsilon}{1+\epsilon})^d$ different colors, (so in particular, at least $(1+\frac23\epsilon)^d$ different colors for all sensible $\epsilon\in(0,\frac12]$).
\end{abstract}

\vfill
\tableofcontents
\vfill

\newpage

\begin{readernote}
The main result of this paper (\numberfullref{kkm-lebesgue-variant}) along with most contents of the appendices originally appeared in a theoretical computer science paper of ours currently in the computer science category of ArXiv \cite{FOCS23_submission}. The purpose of the present article is to highlight this result on its own and make it more visible to the mathematical community. It is reproduced here along with a detailed motivation, additional discussion, and a discretized version of the result that does not appear in \cite{FOCS23_submission}. We note, however, that \cite{FOCS23_submission} contains additional results  including an analogous result for $\R^d$ (as compared to $[0,1]^d$ here) which will not be discussed in this paper; in that context a slightly better bound of $(1+2\epsilon)^d$ (as compared to $(1+\frac23\epsilon)^d$ here) can be achieved. Furthermore, \cite{FOCS23_submission} includes results for {\em every norm} on $\R^d$ (not just $\ell_\infty$), but this is less natural on $[0,1]^d$, so in this paper we focus only on the $\ell_\infty$ norm.
\end{readernote}
\vspace{0.25in}

\section{Introduction}

The Lebesgue covering theorem (see \cite{lebesgue_sur_1911} or \cite[Theorem~IV~2]{dimension_theory_lebesgue_covering}),  Sperner's lemma on the cube (see \cite{de_loera_polytopal_2002}), and the KKM lemma on the cube (see \cite{Kuhn66, wolsey_cubical_1977, komiya_simple_1994, van_der_laan_intersection_1999}) are all known to be naturally equivalent in that any of the three results can be used to fairly directly prove any of the others. Informally, they guarantee that in any well-behaved coloring/covering of the $d$-dimensional cube, there exists a point at the closure of at least $d+1$ colors/sets. We prove the following ``neighborhood'' variant(s) of these results by considering an open ball instead of a point: {\em for any well-behaved coloring/covering of the $d$-dimensional cube and any $\epsilon>0$, there exists a placement of the $\ell_\infty$ $\epsilon$-ball (i.e. a cube of side length $2\epsilon$) which intersects at least $(1+\frac{\epsilon}{1+\epsilon})^d$ colors/sets.} Thus, while the traditional results give a linear bound ($d+1$) on the number of colors/sets, for any fixed $\epsilon$, our neighborhood variant gives an exponential (in $d$) bound on the number of colors/sets.

We first state a theorem which is naturally equivalent\footnote{
    The equivalence is discussed in \Autoref{subsec:kkm-lebesgue} and proved in \Autoref{subsec:kkm-lebesgue-coloring-equivalence}.
} to both the 
cubical KKM lemma 
and the 
Lebesgue covering theorem 
but stated in terms of colorings to be more convenient to work with (\Autoref{kkm-lebesgue}). Then we formally state our main result (\Autoref{kkm-lebesgue-variant}) which is a neighborhood variant of \Autoref{kkm-lebesgue}. As direct corollaries of \Autoref{kkm-lebesgue-variant}, we obtain neighborhood variants of both the 
cubical KKM lemma 
and the 
Lebesgue covering theorem.
We also demonstrate that under an appropriate perspective, our main result implies a neighborhood variant of Sperner's lemma on the cube.


Below, we consider $[0,1]^d$ to be the standard unit $d$-cube and $V=\set{0,1}^{d}$ to be its set of vertices. A face $F$ of the cube $[0,1]^d$ is a product set $F=\prod_{i=1}^d F_i$ where each $F_i$ is one of three sets: $\set{0}$, $\set{1}$, or $[0,1]$. 
For a set $X\subseteq\R^d$ and coordinate $i\in[d]$, we will use the projection notation $\pi_i(X)=\set{x_i:\vec{x}\in X}$.
Two faces $F,F'$ are said to be opposite each other if there is some coordinate $i_0\in[d]$ such that $F_{i_0}=\set{0}$ and $F_{i_0}'=\set{1}$ (or vice versa). We will frequently say that a pair of points $\vec{x},\vec{x}\,'\in[0,1]^d$ belong to opposite faces if there is an opposite pair of faces $F,F'$ with $\vec{x}\in F$ and $\vec{x}\,'\in F'$. Equivalently, a pair of points $\vec{x},\vec{x}\,'\in[0,1]^d$ belong to opposite faces if there is some coordinate $i_0\in[d]$ such that $x_{i_0}=0$ and $x'_{i_0}=1$ (or vice versa). A final equivalent characterization is that a pair of points $\vec{x},\vec{x}\,'\in[0,1]^d$ belong to opposite faces if $\norm{\vec{x}-\vec{x}\,'}_\infty=1$.
We will say that a set $S$ does not contain points of opposite faces if for every pair of opposite faces $F,F'$ at least one of $S\cap F$ and $S\cap F'$ is empty. Equivalently, $S$ does not contain points of opposite faces if no pair of points from $S$ are $\ell_\infty$ distance $1$ apart.


\begin{definition}[Sperner-Lebesgue-Knaster-Kuratowski-Mazurkiewicz Coloring]
For a set $\Lambda\subseteq[0,1]^d$ (possibly $\Lambda=[0,1]^d$), an {\em SLKKM coloring} of $\Lambda$ is a function $\chi:\Lambda\to C$ for some set $C$ such that $\chi$ does not map points on opposite faces to the same color. If $\abs{C}<\infty$, we call $\chi$ a {\em finite SLKKM coloring}.
\end{definition}


\begin{theorem-known}[KKM-Lebesgue Theorem]\label{kkm-lebesgue}
    Given a finite SLKKM coloring $\chi$ of $[0,1]^d$, there exists a point $\vec{p}\in[0,1]^d$ belonging to the closure of at least $d+1$ color sets (i.e. $\abs*[\big]{\set{c\in C:\vec{p}\in \clos{\chi^{-1}(c)}}}\geq d+1$).
\end{theorem-known}


The main result of this paper is that if we are interested in a small cubical region (an open $\ell_\infty$ $\epsilon$-ball), rather than a single point as in the \nameref{kkm-lebesgue}, then for any SLKKM coloring we can find a point $\vec{p}$ where the open $\ell_\infty$ $\epsilon$-ball at $\vec{p}$ intersects a significant number of colors. The focus on an open ball instead of a point is why we consider this to be a ``neighborhood'' variant of the cubical KKM lemma and the Lebesgue covering theorem. In the statement below, $\balloinf{\epsilon}{\vec{p}}$ denotes the open $\epsilon$-ball at $\vec{p}$ with respect to the $\ell_\infty$ norm.

\begin{restatable}[Neighborhood KKM-Lebesgue Theorem]{theorem}{restatableNeighborhoodVariant}\label{kkm-lebesgue-variant}
Given an SLKKM coloring of $[0,1]^d$, for any $\epsilon\in(0,\infty)$ there exists a point $\vec{p}\in[0,1]^d$ such that $\balloinf{\epsilon}{\vec{p}}$ contains points of at least $\ceil{\big(1+\frac{\epsilon}{1+\epsilon}\big)^d}$ different colors. In particular, if $\epsilon\in(0,\frac12]$ then $\balloinf{\epsilon}{\vec{p}}$ contains points of at least $\ceil{\big(1+\frac23 \epsilon\big)^d}$ different colors.
\end{restatable}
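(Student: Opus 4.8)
The plan is to prove the stated inequality by induction on the dimension $d$, peeling off one coordinate at a time so that the exponent builds up multiplicatively, gaining the factor $1+\frac{\epsilon}{1+\epsilon}=\frac{1+2\epsilon}{1+\epsilon}$ at each step. Throughout I would use the equivalent form of the SLKKM condition noted in the introduction: no color class contains a pair of points at $\ell_\infty$-distance exactly $1$. Since the number of colors met by a ball is an integer, it suffices to produce some $\balloinf{\epsilon}{\vec{p}}$ meeting at least $\big(1+\frac{\epsilon}{1+\epsilon}\big)^d$ colors, the ceiling then being automatic. For the base case $d=1$ the target is just $2$: if every open interval $(p-\epsilon,p+\epsilon)$ were monochromatic, then, since two such intervals whose centers differ by less than $2\epsilon$ overlap, a chain of overlapping monochromatic intervals running from $0$ to $1$ would force $\chi(0)=\chi(1)$, contradicting the SLKKM condition; so some $\epsilon$-interval already exhibits two colors.

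For the inductive step I would single out the last coordinate and write $[0,1]^d=[0,1]^{d-1}\times[0,1]$. Two structural facts drive the argument. First, for every $t\in[0,1]$ the restriction of $\chi$ to the slice $[0,1]^{d-1}\times\{t\}$ is an SLKKM coloring of $[0,1]^{d-1}$, and more generally so is $\vec{x}\mapsto\chi\big(\vec{x},f(\vec{x})\big)$ for \emph{any} function $f\colon[0,1]^{d-1}\to[0,1]$ — because a pair of points on opposite faces of $[0,1]^{d-1}$ is separated in one of the first $d-1$ coordinates and so lies on opposite faces of $[0,1]^d$. Second, fixing a $(d-1)$-dimensional $\epsilon$-box $Q=\prod_{i<d}(q_i-\epsilon,q_i+\epsilon)$, for each color $c$ let $T_c=\{t\in[0,1]:c\in\chi(Q\times\{t\})\}$; since two points of a common color are never at $\ell_\infty$-distance $1$, each $T_c$ fails to contain both $0$ and $1$ (it is a one-dimensional SLKKM set), the $T_c$ over colors appearing in the tube $Q\times[0,1]$ cover $[0,1]$, and the number of colors met by the full $d$-ball $Q\times(p_d-\epsilon,p_d+\epsilon)$ is exactly the number of $T_c$ meeting the window $(p_d-\epsilon,p_d+\epsilon)$. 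By the inductive hypothesis each slice already supports a $(d-1)$-box in which $\chi$ shows at least $\big(1+\frac{\epsilon}{1+\epsilon}\big)^{d-1}$ colors.

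The hard part will be converting this into the $d$-dimensional bound — exhibiting a single box $Q$ and a width-$2\epsilon$ window that together meet at least $\big(1+\frac{\epsilon}{1+\epsilon}\big)^{d-1}\cdot\frac{1+2\epsilon}{1+\epsilon}$ of the sets $T_c$. Two intertwined issues arise. One is that the ``good box'' supplied by the inductive hypothesis depends on the slice; I expect to handle this either by a compactness/finite-subcover argument over the slice parameter or by replacing slices with a single, carefully chosen slicing function $f$ and invoking the inductive hypothesis for $\chi(\cdot,f(\cdot))$. The other — and the genuine crux — is to extract \emph{exactly} the multiplicative surplus $\frac{1+2\epsilon}{1+\epsilon}$: one must show that a width-$2\epsilon$ window necessarily meets that much more of the family $\{T_c\}$ than a single slice does, using only that the $T_c$ cover $[0,1]$ while each omits an endpoint. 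The precise appearance of $\frac{1+2\epsilon}{1+\epsilon}$ strongly suggests this surplus is governed by a one-dimensional extremal problem — sliding a length-$2\epsilon$ window against a covering family of ``endpoint-omitting'' sets — and that making this quantitative, carrying it out uniformly in $Q$, and threading it back through the product structure (while tracking the ceiling) is where essentially all the difficulty lies.
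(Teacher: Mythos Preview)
Your proposal sets up an induction on $d$, which is a genuinely different route from the paper, but the inductive step is not carried out --- you explicitly label it ``the hard part'' and ``the genuine crux'' and then stop. You correctly handle $d=1$ and correctly identify the two obstacles: the slice-dependence of the good box $Q$, and the extraction of the precise surplus factor $\tfrac{1+2\epsilon}{1+\epsilon}$. But you resolve neither. The one-dimensional extremal statement you gesture at is never formulated, and in the form your setup suggests it is simply false: for a \emph{fixed} $Q$, the family $\{T_c\}$ is only required to cover $[0,1]$ with each member omitting one endpoint, and such a family can consist of exactly two sets (e.g.\ $[0,1)$ and $(0,1]$), so every width-$2\epsilon$ window meets both and there is zero surplus over what each slice already shows. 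Any genuine gain must therefore come from coordinating the choice of $Q$ with the vertical structure of the coloring, and neither the compactness idea nor the slicing-function idea you mention explains how to produce exactly the factor $\tfrac{1+2\epsilon}{1+\epsilon}$ rather than merely ``some'' gain. As written, this is an outline with the main lemma missing.

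For contrast, the paper's proof is non-inductive and measure-theoretic; the exponent $d$ appears in one shot rather than being accumulated coordinate by coordinate. The coloring is extended to the enlarged cube $[-\tfrac12-\epsilon,\tfrac12+\epsilon]^d$ by coordinatewise truncation, and one checks that each extended color class $Y_c$ lies in a box of volume $(1+\epsilon)^d$ bounded away from one face in every coordinate. Minkowski-summing $Y_c$ with the appropriately oriented open orthant $B_{\vec v}$ of the $\ell_\infty$ $\epsilon$-ball keeps $Y_c+B_{\vec v}$ inside the enlarged cube, and the Brunn--Minkowski inequality together with the elementary bound $(x^{1/d}+\alpha)^d\ge x(1+\alpha)^d$ for $x\in[0,1]$ gives $m(Y_c+B_{\vec v})\ge m_{out}(Y_c)\cdot\big(1+\tfrac{\epsilon}{1+\epsilon}\big)^d$. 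Summing over colors and applying a continuous pigeonhole inside the enlarged cube produces a point covered by at least $\big\lceil\big(1+\tfrac{\epsilon}{1+\epsilon}\big)^d\big\rceil$ of the thickened sets, hence an $\epsilon$-ball meeting that many colors; a final truncation returns the center to the unit cube.
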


The proof will be given in \Autoref{sec:proof}, but for now we offer three comments about this theorem. First, we no longer need a finiteness assumption (as in the \nameref{kkm-lebesgue}) because we are not working with closures\footnote{
    The reason the \nameref{kkm-lebesgue} requires finiteness is because for a finite collection of sets, the union of closures equals the closure of unions, but this property does not hold in general for infinite collections. Even without the finiteness condition, it is still the case that there exists a point $\vec{p}$ such that every open set containing $\vec{p}$ intersects at least $d+1$ colors. See for example the proof of \Autoref{kkm-implies-kkm-lebesgue} where this is demonstrated. The \nameref{kkm-lebesgue} does not hold in general without the finiteness hypothesis as demonstrated by the non-finite SLKKM coloring which assigns a unique color to each point of $[0,1]^d$.
}.
Second, restricting to $\epsilon\leq\frac12$ in the above statement is reasonable because for $\epsilon>\frac12$ the $\ell_\infty$ $\epsilon$-ball can be placed at the center of the unit cube and is then a superset of the cube and hence it contains all the points in the cube. Note that no two vertices of the cube can have the same color in an SLKKM coloring as they belong to opposite faces. This means that every SLKKM coloring uses at least $2^d$ colors (and there exist SLKKM colorings with only $2^d$ colors---e.g. color each of the $2^d$ orthants a distinct color). Thus, for $\epsilon>\frac12$, we don't need the bound of $\ceil{\big(1+\frac{\epsilon}{1+\epsilon}\big)^d}$ in the \nameref{kkm-lebesgue-variant} since we just demonstrated a better (tight) bound of $2^d$. For this reason, it is sensible to only consider $\epsilon\leq\frac12$ in the statement of the \nameref{kkm-lebesgue-variant} and consequently obtain the cleaner (and only moderately worse) bound of $\ceil{\big(1+\frac23 \epsilon\big)^d}$.

Third, while the statement is given for fixed $\epsilon$ and $d$, we have already encountered applications of this theorem where $\epsilon$ is a function of the dimension $d$ (see \cite{geometry_of_rounding, FOCS23_submission}), so we briefly note the asymptotic behavior of this bound. If $\epsilon\in O\left(\frac1{d}\right)$, then our lower bound on the number of colors given by the \nameref{kkm-lebesgue-variant} is an $O(1)$ bound\footnote{
    This is because $\lim_{d\to\infty}(1+\frac{c}{d})^d=e^c$.
} which is asymptotically worse than the value $d+1$ given in the \nameref{kkm-lebesgue}. However, if $\epsilon\in\omega\left(\frac{\ln(d)}{d}\right)$ then our bound is super-polynomial in the dimension\footnote{
    Let $k=(1+\frac23\epsilon)^d$. Because $\epsilon\in(0,\frac12]$ we have $\frac23\epsilon\in(0,\frac13]$. Using the inequality $\ln(1+x)\geq\frac x2$ for small enough $x$ (in particular for $x\in(0,\frac13]$) we have $\ln(k)=d\ln(1+\frac23\epsilon)\geq \frac13 d\epsilon$, so $\epsilon\leq\frac{3\ln(k)}{d}$. Thus, if the bound $k$ is polynomial in $d$, then $\epsilon\in O\left(\frac{\ln(d)}{d}\right)$, so if $\epsilon\in\omega\left(\frac{\ln(d)}{d}\right)$ the $k$ is not polynomial in $d$.
}. In particular, if $\epsilon\in\Theta(1)$, then our bound is asymptotically exponential in $d$.

\subsection{The Lebesgue Covering Theorem and the KKM Lemma on the Cube}
\label{subsec:kkm-lebesgue}

We will now briefly discuss the standard statements of the Lebesgue covering theorem and the cubical version of the KKM Lemma and how they relate to the \namefullref{kkm-lebesgue}. We can capture the hypotheses of the Lebesgue covering theorem and the cubical KKM lemma with the following two definitions. A Lebesgue cover is a finite closed cover of $[0,1]^d$ with no set containing points on opposite faces. Informally, a KKM cover is a finite closed cover of $[0,1]^d$ with the sets of the cover associated to the vertices of the cube; we require that any point in a face $F$ is covered by one of the sets corresponding to the vertices defining $F$.

\begin{definition-known}[Lebesgue Cover]
A {\em Lebesgue cover} of $[0,1]^d$ is an indexed family $\C=\set{C_{n}}_{n\in[N]}$ of closed subsets of $\R^d$ (for some $N\in\N$) which covers $[0,1]^d$ such that for each $n\in[N]$, $C_n$ does not contain points on opposite faces of the cube. 
\end{definition-known}

\begin{definition-known}[KKM Cover]
A {\em KKM cover} of $[0,1]^d$ is an indexed family $\C=\set{C_{\vec{v}}}_{\vec{v}\in\set{0,1}^d}$ of closed subsets of $\R^d$ such that for each face $F$ of the cube, $F\subseteq\bigcup_{\vec{v}\in F\cap\set{0,1}^d} C_{\vec{v}}$. (In particular, the cube itself is a face, so the cube is covered.)
\end{definition-known}

While the two notions of covers are different from each other\footnote{
    For example, consider the indexed family $\C=\set{C_{\vec{v}}}_{\vec{v}\in\set{0,1}^d}$ where $C_{\vec{v}}=[0,1]^d$ for each $\vec{v}$; this is a KKM cover, but not a Lebesgue cover. Conversely, a finite Lebesgue cover with cardinality exceeding $\abs{\set{0,1}^d}=2^d$ is not a KKM cover.
}, they are both sufficient to guarantee the same conclusion as in the \nameref{kkm-lebesgue}: at least $d+1$ sets in the cover meet at some point.

\begin{theorem-known}[Lebesgue Covering Theorem]\label{true-lebesgue-covering-theorem}
Given a Lebesgue cover of $[0,1]^d$, there exists a point $\vec{p}\in[0,1]^d$ belonging to at least $d+1$ sets in the cover (i.e. $\abs{\set{n\in[N]:\vec{p}\in C_{n}}}\geq d+1$).
\end{theorem-known}

\begin{theorem-known}[Cubical KKM Lemma]\label{true-kkm-lemma}
Given a KKM cover of $[0,1]^d$, there exists a point $\vec{p}\in[0,1]^d$ belonging to at least $d+1$ sets in the cover (i.e. $\abs{\set{\vec{v}\in\set{0,1}^d:\vec{p}\in C_{\vec{v}}}}\geq d+1$).
\end{theorem-known}

These results are well known and can be found for example in \cite{Kuhn66, wolsey_cubical_1977, komiya_simple_1994, van_der_laan_intersection_1999, de_loera_polytopal_2002} for the \nameref{true-kkm-lemma} and in \cite{lebesgue_sur_1911} or \cite[Theorem~IV~2]{dimension_theory_lebesgue_covering} for the \nameref{true-lebesgue-covering-theorem}. Though the \nameref{true-lebesgue-covering-theorem}, the \nameref{true-kkm-lemma}, and the \nameref{kkm-lebesgue} (our coloring combination of the two) are all naturally equivalent to each other (a careful proof of this folklore result is given in \Autoref{subsec:kkm-lebesgue-coloring-equivalence}), we chose to use the \nameref{kkm-lebesgue} to represent these two more famous results and motivate our main result (the \nameref{kkm-lebesgue-variant}) for the following reasons.

For our purposes, we don't want to work directly with a KKM cover because we need to deal with extensions along boundaries that are cumbersome to work with for a KKM cover since sets can intersect opposite faces. We also don't want to work with Lebesgue covers because the definition of a Lebesgue cover actually implies that every set in the cover (when viewed as a subset of $[0,1]^d$) has $\ell_\infty$ diameter strictly less than $1$, but we are okay with sets that have diameter $1$ as long as they don't contain points on opposite faces (i.e. no points in the set attain $\ell_\infty$ distance $1$ from each other). Also, we don't want to require the sets to be closed which is formally a requirement of both types of covers. 




Nonetheless, as one may expect, our main result (the \nameref{kkm-lebesgue-variant}) can be equivalently restated by replacing the SLKKM coloring hypothesis with either a Lebesgue cover
(\Autoref{corollary:neighborhood-lebesgue}) 
or a KKM cover 
(\Autoref{corollary:neighborhood-kkm}). 
The proofs that these corollaries follow from the \nameref{kkm-lebesgue-variant} are given after the statements below. As this direction of the implications is sufficient to prove all three results, we don't provide the proofs of the reverse implications of the equivalence, but the main ideas are identical to those used to prove the equivalence of the \nameref{true-kkm-lemma}, the \nameref{true-lebesgue-covering-theorem}, and the \nameref{kkm-lebesgue} (found in \Autoref{subsec:kkm-lebesgue-coloring-equivalence}).


\begin{corollary}[Neighborhood Lebesgue Theorem]\label{corollary:neighborhood-lebesgue}
Given a Lebesgue cover of $[0,1]^d$, for any $\epsilon\in(0,\infty)$ there exists a point $\vec{p}\in[0,1]^d$ such that $\balloinf{\epsilon}{\vec{p}}$ intersects at least $\ceil{\big(1+\frac{\epsilon}{1+\epsilon}\big)^d}$ sets in the cover. In particular, if $\epsilon\in(0,\frac12]$ then $\balloinf{\epsilon}{\vec{p}}$ intersects at least $\ceil{\big(1+\frac23 \epsilon\big)^d}$ sets in the cover.
\end{corollary}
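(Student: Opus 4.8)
The plan is to reduce the statement to the \nameref{kkm-lebesgue-variant} by manufacturing an SLKKM coloring from the given Lebesgue cover. Let $\C=\set{C_n}_{n\in[N]}$ be a Lebesgue cover of $[0,1]^d$. Since $\C$ covers $[0,1]^d$ and $N$ is finite, for every $\vec{x}\in[0,1]^d$ the set $\set{n\in[N]:\vec{x}\in C_n}$ is nonempty and has a minimum; define $\chi:[0,1]^d\to[N]$ by $\chi(\vec{x})=\min\set{n\in[N]:\vec{x}\in C_n}$.

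First I would verify that $\chi$ is an SLKKM coloring. Suppose $\vec{x},\vec{x}\,'\in[0,1]^d$ lie on opposite faces and $\chi(\vec{x})=\chi(\vec{x}\,')=n$. Then $\vec{x},\vec{x}\,'\in C_n$, so $C_n$ contains points on opposite faces, contradicting the defining property of a Lebesgue cover. Hence $\chi$ maps points on opposite faces to distinct colors, so it is a (finite) SLKKM coloring of $[0,1]^d$.

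Next I would apply the \nameref{kkm-lebesgue-variant} to $\chi$: for any $\epsilon\in(0,\infty)$ there is a point $\vec{p}\in[0,1]^d$ such that $\balloinf{\epsilon}{\vec{p}}$ contains points of at least $\ceil{\big(1+\frac{\epsilon}{1+\epsilon}\big)^d}$ distinct colors, say colors $c_1,\dots,c_k\in[N]$ with $k\geq\ceil{\big(1+\frac{\epsilon}{1+\epsilon}\big)^d}$. For each $i$, pick $\vec{x}^{(i)}\in\balloinf{\epsilon}{\vec{p}}$ with $\chi(\vec{x}^{(i)})=c_i$; by definition of $\chi$ this gives $\vec{x}^{(i)}\in C_{c_i}$, so $C_{c_i}\cap\balloinf{\epsilon}{\vec{p}}\neq\emptyset$. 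Since $c_1,\dots,c_k$ are distinct, the sets $C_{c_1},\dots,C_{c_k}$ are distinct members of the cover, so $\balloinf{\epsilon}{\vec{p}}$ intersects at least $k\geq\ceil{\big(1+\frac{\epsilon}{1+\epsilon}\big)^d}$ sets of $\C$. The ``in particular'' clause for $\epsilon\in(0,\tfrac12]$ is inherited verbatim from the corresponding clause of the \nameref{kkm-lebesgue-variant}.

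There is essentially no serious obstacle here; the only points requiring care are that the cover condition plus finiteness make $\chi$ well defined, and that the ``no points on opposite faces'' hypothesis is exactly what forces $\chi$ to be an SLKKM coloring. (Note that closedness of the $C_n$ is not used in this direction — it is irrelevant because we work with the coloring $\chi$ and its literal color classes rather than with closures.) The reverse implication, showing the \nameref{kkm-lebesgue-variant} follows from this corollary, would be handled exactly as in the classical equivalence of \Autoref{subsec:kkm-lebesgue-coloring-equivalence} and is omitted as discussed above.
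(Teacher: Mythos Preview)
Your proposal is correct and follows essentially the same approach as the paper: define $\chi(\vec{x})=\min\set{n\in[N]:\vec{x}\in C_n}$, verify it is an SLKKM coloring using the Lebesgue-cover hypothesis, apply the \nameref{kkm-lebesgue-variant}, and transfer the conclusion via $\chi^{-1}(n)\subseteq C_n$. The only cosmetic difference is that the paper phrases the last step as a set containment $\set{n:\balloinf{\epsilon}{\vec{p}}\cap\chi^{-1}(n)\neq\emptyset}\subseteq\set{n:\balloinf{\epsilon}{\vec{p}}\cap C_n\neq\emptyset}$ rather than picking explicit witnesses, and it counts distinct \emph{indices} $n$ rather than asserting the $C_{c_i}$ are distinct as sets (which need not hold for an indexed family), but your argument already delivers exactly that.
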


\begin{corollary}[Neighborhood KKM Theorem]\label{corollary:neighborhood-kkm}
Given a KKM cover of $[0,1]^d$, for any $\epsilon\in(0,\infty)$ there exists a point $\vec{p}\in[0,1]^d$ such that $\balloinf{\epsilon}{\vec{p}}$ intersects at least $\ceil{\big(1+\frac{\epsilon}{1+\epsilon}\big)^d}$ sets in the cover. In particular, if $\epsilon\in(0,\frac12]$ then $\balloinf{\epsilon}{\vec{p}}$ intersects at least$\ceil{\big(1+\frac23 \epsilon\big)^d}$ sets in the cover.
\end{corollary}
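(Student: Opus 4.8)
The plan is to deduce this from the \nameref{kkm-lebesgue-variant} by converting the KKM cover into an SLKKM coloring, mirroring the classical reduction of the \nameref{true-kkm-lemma} to the \nameref{kkm-lebesgue}. Given a KKM cover $\C=\set{C_{\vec{v}}}_{\vec{v}\in\set{0,1}^d}$, I would define a coloring $\chi\colon[0,1]^d\to\set{0,1}^d$ as follows. For $\vec{x}\in[0,1]^d$, let $F_{\vec{x}}$ be the carrier of $\vec{x}$, i.e.\ the unique smallest face of $[0,1]^d$ containing $\vec{x}$; concretely $F_{\vec{x}}=\prod_{i=1}^d F_i$ with $F_i=\set{x_i}$ when $x_i\in\set{0,1}$ and $F_i=[0,1]$ otherwise. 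Applying the defining property of a KKM cover to the face $F_{\vec{x}}$ yields some vertex $\vec{v}\in F_{\vec{x}}\cap\set{0,1}^d$ with $\vec{x}\in C_{\vec{v}}$; set $\chi(\vec{x})$ equal to one such $\vec{v}$.

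The first substantive step is to verify that $\chi$ is an SLKKM coloring. If $\vec{x},\vec{x}\,'$ lie on opposite faces, there is a coordinate $i_0$ with, say, $x_{i_0}=0$ and $x'_{i_0}=1$. Then the $i_0$-th factor of $F_{\vec{x}}$ is $\set{0}$ and the $i_0$-th factor of $F_{\vec{x}\,'}$ is $\set{1}$, so $\chi(\vec{x})_{i_0}=0\neq1=\chi(\vec{x}\,')_{i_0}$, whence $\chi(\vec{x})\neq\chi(\vec{x}\,')$. Thus $\chi$ satisfies the SLKKM hypothesis (and is trivially finite, though finiteness is not needed).

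Next I would apply the \nameref{kkm-lebesgue-variant} to $\chi$: for the given $\epsilon$ there is $\vec{p}\in[0,1]^d$ such that $\balloinf{\epsilon}{\vec{p}}$ contains points of at least $\ceil{\big(1+\frac{\epsilon}{1+\epsilon}\big)^d}$ distinct colors, and at least $\ceil{\big(1+\frac23\epsilon\big)^d}$ when $\epsilon\in(0,\frac12]$. Finally I would translate back: each color $\vec{v}$ occurring in $\balloinf{\epsilon}{\vec{p}}$ is witnessed by some $\vec{x}\in\balloinf{\epsilon}{\vec{p}}\cap[0,1]^d$ with $\chi(\vec{x})=\vec{v}$, and then $\vec{x}\in C_{\vec{v}}$ by construction, so $\balloinf{\epsilon}{\vec{p}}$ meets $C_{\vec{v}}$. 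Since distinct colors are distinct indices $\vec{v}$, the ball meets at least that many members of $\C$, which is the claim.

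There is no genuinely hard step — the whole argument is a routine translation — but the point requiring care is the verification that $\chi$ is an SLKKM coloring, which relies crucially on always choosing $\chi(\vec{x})$ from the carrier $F_{\vec{x}}$; selecting an arbitrary $\vec{v}$ with $\vec{x}\in C_{\vec{v}}$ would fail (e.g.\ for the all-$[0,1]^d$ KKM cover). A minor subtlety is that, although $\balloinf{\epsilon}{\vec{p}}$ may stick out of the cube, the witnessing points returned by the \nameref{kkm-lebesgue-variant} necessarily lie in its domain $[0,1]^d$, so the intersections with the $C_{\vec{v}}$ that they certify are genuine.
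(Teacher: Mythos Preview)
Your proposal is correct and follows essentially the same approach as the paper: define $\chi(\vec{x})$ to be a vertex of the carrier face $F_{\vec{x}}$ whose covering set contains $\vec{x}$, verify this gives an SLKKM coloring via the coordinate argument, apply the \nameref{kkm-lebesgue-variant}, and use $\chi^{-1}(\vec{v})\subseteq C_{\vec{v}}$ to transfer the conclusion. The only cosmetic difference is that the paper makes the choice of $\vec{v}$ explicit by taking the lexicographic minimum, whereas you simply pick ``one such $\vec{v}$''; since the codomain $\set{0,1}^d$ is finite this is harmless.
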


\begin{proof}[Proof of \Autoref{corollary:neighborhood-lebesgue} from the \namefullref{kkm-lebesgue-variant}]
~\\
Let $N\in\N$ and $\C=\set{C_n}_{n\in[N]}$ be a Lebesgue cover of $[0,1]^d$. Because this is a cover, every point of $[0,1]^d$ belongs to some set, so define $\chi$ as follows:
\begin{align*}
    \chi &: [0,1]^d \to [N]\\
    \chi(\vec{x}) &= \min\set{n\in[N]: \vec{x}\in C_{n}}.
\end{align*}
This is trivially a finite SLKKM coloring of $[0,1]^d$ because the codomain of $\chi$ is finite and for $\vec{x}^{(0)}$ and $\vec{x}^{(1)}$ on opposite faces, there is no $n\in[N]$ for which both $\vec{x}^{(0)}\in C_n$ and $\vec{x}^{(1)}\in C_n$ and thus $\set{n\in[N]: \vec{x}^{(0)}\in C_{n}}$ and $\set{n\in[N]: \vec{x}^{(1)}\in C_{n}}$ are disjoint, so $\chi(\vec{x}^{(0)})\not=\chi(\vec{x}^{(1)})$.

Note that $\chi^{-1}(n)$ is the set of points of color $n$. By the \nameref{kkm-lebesgue-variant}, there exists $\vec{p}\in[0,1]^d$ such that 
\[
\abs{\set{n\in[N]:\balloinf{\epsilon}{\vec{p}}\cap\chi^{-1}(n)}}\geq  \left(1+\tfrac{\epsilon}{1+\epsilon}\right)^d.
\]
Fix such a $\vec{p}$ for the remainder of the proof. For each $n\in[N]$, observe that $\chi^{-1}(n)\subseteq C_n$ because for any $\vec{x}\in\chi^{-1}(n)$ we have $\chi(\vec{x})=n$, so by definition of $\chi$ we have $\vec{x}\in C_n$. The following subset containment then follows immediately:
\[
\set{n\in[N]:\balloinf{\epsilon}{\vec{p}}\cap\chi^{-1}(n)\not=\emptyset} \quad\subseteq\quad \set{n\in[N]:\balloinf{\epsilon}{\vec{p}}\cap C_{n}\not=\emptyset}.
\]
and since the former has cardinality at least $(1+\frac{\epsilon}{1+\epsilon})^d$, so does the latter, which proves the result. (The ``in particular'' part of the result is just an arithmetic fact which is demonstrated in the proof of the \namefullref{kkm-lebesgue-variant}.)
\end{proof}

\begin{proof}[Proof of \Autoref{corollary:neighborhood-kkm} from the \namefullref{kkm-lebesgue-variant}]
~\\
Let $\C=\set{C_{\vec{v}}}_{\vec{v}\in\set{0,1}^d}$ be a KKM cover of $[0,1]^d$. For each $\vec{x}\in[0,1]^d$, let $F_{\vec{x}}$ denote the smallest face of the cube containing $\vec{x}$ (i.e. $F_{\vec{x}}$ is the intersection of all faces containing $\vec{x}$, and it is well-known and easily verified that this intersection is also a face). By the defining property of a KKM cover, we have $F_{\vec{x}}\subseteq\bigcup_{\vec{v}\in F_{\vec{x}}\cap\set{0,1}^d}C_{\vec{v}}$, so in particular there exists some $\vec{v}\in F_{\vec{x}}\cap\set{0,1}^d$ with $\vec{x}\in C_{\vec{v}}$. Define the function $\chi$ as follows where $\min_{\mathrm{lex}}$ denotes the minimum element in a subset of $\set{0,1}^d$ under the lexicographic ordering:
\begin{align*}
    \chi &:[0,1]^d \to \set{0,1}^d\\
    \chi(\vec{x}) &= \min_{\mathrm{lex}}\set{\vec{v}\in F_{\vec{x}}\cap\set{0,1}^d: \vec{x}\in C_{\vec{v}}}
\end{align*}
We have already demonstrated that the the set in the definition is not empty, so $\chi$ is well-defined.

We claim that $\chi$ is a finite SLKKM coloring of $[0,1]^d$. The finiteness is trivial because the codomain of $\chi$ is finite, so we need only show it is an SLKKM coloring. Suppose $F^{(0)}$ and $F^{(1)}$ are opposite faces of the cube (i.e. there is some coordinate $j\in[d]$ such that $\pi_j(F^{(0)})=\set{0}$ and $\pi_j(F^{(1)})=\set{1}$) and let $\vec{x}^{(0)}\in F^{(0)}$ and $\vec{x}^{(1)}\in F^{(1)}$. Because $\pi_j(F^{(0)})\cap \pi_j(F^{(1)})=\emptyset$, it follows that $F^{(0)}\cap F^{(1)}=\emptyset$, so $F^{(0)}$ and $F^{(1)}$ are disjoint sets.

Now consider the faces $F_{\vec{x}^{(0)}}$ and $F_{\vec{x}^{(1)}}$. Because $\vec{x}^{(0)}\in F^{(0)}$ and $F_{\vec{x}^{(0)}}$ is by definition the intersection of all faces containing $\vec{x}^{(0)}$, we have $F_{\vec{x}^{(0)}}\subseteq F^{(0)}$ (and similarly replacing ``$0$'' with ``$1$'') so that also $F_{\vec{x}^{(0)}}$ and $F_{\vec{x}^{(1)}}$ are disjoint. By definition of $\chi$ we have $\chi(\vec{x}^{(0)})\in F_{\vec{x}^{(0)}}$ and $\chi(\vec{x}^{(1)})\in F_{\vec{x}^{(1)}}$ showing that $\chi(\vec{x}^{(0)})\not=\chi(\vec{x}^{(1)})$, so $\chi$ is an SLKKM coloring.

By the \nameref{kkm-lebesgue-variant}, there exists $\vec{p}\in[0,1]^d$ such that 
\[
\abs{\set{\vec{v}\in\set{0,1}^d:\balloinf{\epsilon}{\vec{p}}\cap\chi^{-1}(\vec{v})\not=\emptyset}}\geq \left(1+\tfrac{\epsilon}{1+\epsilon}\right)^d.
\]
Fix such a $\vec{p}$ for the remainder of the proof. For each $\vec{v}\in\set{0,1}^d$, observe that $\chi^{-1}(\vec{v})\subseteq C_{\vec{v}}$ because for any $\vec{x}\in\chi^{-1}(\vec{v})$ we have $\chi(\vec{x})=\vec{v}$, so by definition of $\chi$ we have $\vec{x}\in C_{\vec{v}}$. The following subset containment then follows immediately:
\[
\set{\vec{v}\in\set{0,1}^d:\balloinf{\epsilon}{\vec{p}}\cap\chi^{-1}(\vec{v})\not=\emptyset} \quad\subseteq\quad \set{\vec{v}\in\set{0,1}^d:\balloinf{\epsilon}{\vec{p}}\cap C_{\vec{v}}\not=\emptyset}.
\]
and since the former has cardinality at least $(1+\frac{\epsilon}{1+\epsilon})^d$, so does the latter which proves the result. (The ``in particular'' part of the result is just an arithmetic fact which is demonstrated in the proof of the \namefullref{kkm-lebesgue-variant}.)
\end{proof}


\subsection{Sperner's Lemma on the Cube}

In light of the fact that our main result can (and should) be viewed as a neighborhood variant of the \nameref{true-lebesgue-covering-theorem} and the \nameref{true-kkm-lemma}, it is natural to ask whether we can obtain a discretized version of our result to serve as a neighborhood version of Sperner's lemma on the cube, and the answer is yes.

It is important to note that there are multiple natural ways to adapt Sperner's lemma from the simplex to the cube. For example, \cite{de_loera_polytopal_2002} considers subdividing the cube into simplices and using $2^d$ colors (because this is natural in the broader polytope setting in which they work) while \cite{Kuhn66} considers subdividing the cube into smaller cubes but still using $2^d$ colors, and \cite{wolsey_cubical_1977} considers a subdivision into smaller cubes with either $2^d$ colors or with $d+1$ colors. 

Clearly, it would not make sense to try to consider our style of result for the cubical adaptions of Sperner's lemma which use $d+1$ colors because we cannot possibly hope in general for some $\epsilon$-ball to intersect $\ceil{\big(1+\frac23\epsilon\big)^d}$-many colors if only $d+1$ colors are used. Thus, the natural context for us is the cubical adaptions of Sperner's lemma in which $2^d$ colors are used. 

Another observation is that arbitrary triangulations or cube decompositions of $[0,1]^d$ defined by some set of points $\Lambda$ are not strong enough for us. We are working with an $\epsilon$-ball, so we need to have some information about how close together points in $\Lambda$ are from each other. We define below the notion we will use to make our main result discrete; informally, a set $\Lambda$ is called $\rho$-proximate if every point within a face $F$ is distance at most $\rho$ from a point in $\Lambda$ which belongs to the same face $F$. In the formal definition below, while $\Lambda$ can be thought of as a discrete set, this is not necessary.

\begin{definition}[$\rho$-Proximate Set]\label{defn:proximate}
Let $\Lambda\subseteq[0,1]^d$. For $\rho\in[0,\infty)$, $\Lambda$ is called $\rho$-proximate if for every face $F$ of $[0,1]^d$ and for every $\vec{x}\in F$, there exists $\vec{y}\in F\cap\Lambda$ such that $\norm{\vec{x}-\vec{y}}_\infty\leq\rho$.
\end{definition}

\begin{remark}\label{proximate-rephrasing-remark}
We could have equivalently rephrased the last part of the definition as ``$\ldots$ for every face $F$ of $[0,1]^d$ and for every $\vec{x}\in F$, we have $F\cap\Lambda\cap\ballcinf{\rho}{\vec{x}}\not=\emptyset$''. 
Or we could have equivalently rephrased it as ``$\ldots$ for every face $F$ of $[0,1]^d$, we have $F\subseteq \bigcup_{\vec{p}\in F\cap\Lambda}\ballcinf{\rho}{\vec{p}}$''.
Or we could have equivalently rephrased it as ``$\ldots$ for every face $F$ of $[0,1]^d$, we have $F\subseteq (F\cap\Lambda)+\ballcinf{\rho}{\vec{0}}$'' where ``$+$'' denotes the Minkowski sum of the two sets.
\end{remark}

Of particular note is that for any $\rho\in[0,1]$ (taking $n=\floor{1/\rho}$) the grid $\set{0,\rho,2\rho,\ldots,(n-1)\rho,n\rho,1}^d$ of $(n+2)^d$ points is $\rho$-proximate. This grid occurs naturally in many contexts and is one of the simplest settings in which we can consider discretizing the main result.

Now, if we discretize the coloring hypothesis of the \nameref{kkm-lebesgue-variant} to a set of points that are $\rho$-proximate, then we get the same conclusion as the \nameref{kkm-lebesgue-variant} but have to account for this spacing using the triangle inequality. Therefore, an ``effective $\epsilon$'' of ``$\epsilon-\rho$'' appears in the result.

\begin{restatable}[Neighborhood Sperner Lemma]{theorem}{restatableNeighborhoodSperner}\label{sperner-variant}
Let $\rho\in[0,\infty)$ and $\Lambda\subseteq [0,1]^d$ be $\rho$-proximate. Let $\rho'=\min(\rho,\frac12)$. Given an SLKKM coloring of $\Lambda$, for any $\epsilon\in(0,\infty)$ there exists a point $\vec{p}\in[0,1]^d$ such that $\balloinf{\epsilon}{\vec{p}}$ contains points of at least $\ceil{\Big(1+\frac{(\epsilon-\rho')}{1+(\epsilon-\rho')}\Big)^d}$ different colors. In particular, if $\epsilon\in(0,\frac12]$ then $\balloinf{\epsilon}{\vec{p}}$ contains points of at least $\ceil{\big(1+\frac23 (\epsilon-\rho')\big)^d}$ different colors.
\end{restatable}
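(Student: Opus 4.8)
The plan is to deduce \Autoref{sperner-variant} from the \nameref{kkm-lebesgue-variant} by extending the given SLKKM coloring of $\Lambda$ to an SLKKM coloring of all of $[0,1]^d$, applying the continuous result there with the shrunken radius $\epsilon-\rho'$, and then transferring the colored points back into $\Lambda$ using a single triangle inequality.

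First I would record the observation that every $\rho$-proximate set is automatically $\rho'$-proximate for $\rho'=\min(\rho,\tfrac12)$, so that we may assume $\Lambda$ is $\rho'$-proximate from the outset. When $\rho\le\tfrac12$ this is vacuous. When $\rho>\tfrac12$, applying $\rho$-proximateness to each singleton face $F=\set{\vec v}$ forces $\vec v\in\Lambda$, so $\set{0,1}^d\subseteq\Lambda$; then for any face $F$ and any $\vec x\in F$, rounding each free coordinate of $\vec x$ to its nearer endpoint produces a vertex of $F$ --- hence a point of $F\cap\Lambda$ --- within $\ell_\infty$ distance $\tfrac12$ of $\vec x$.

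Next I would construct the extension. For $\vec x\in[0,1]^d$ let $F_{\vec x}$ be the smallest face containing $\vec x$; by $\rho'$-proximateness applied to $F_{\vec x}$, the set $F_{\vec x}\cap\Lambda\cap\ballcinf{\rho'}{\vec x}$ is nonempty, so fix a point $\vec y(\vec x)$ in it (by any selection rule) and put $\tilde\chi(\vec x)=\chi(\vec y(\vec x))$. To see $\tilde\chi$ is an SLKKM coloring of $[0,1]^d$, suppose $\vec x,\vec x\,'$ lie on opposite faces, say $x_{i_0}=0$ and $x'_{i_0}=1$. Then $F_{\vec x}$ is contained in the face $\set{\vec z:z_{i_0}=0}$ and $F_{\vec x\,'}$ in $\set{\vec z:z_{i_0}=1}$, so $\vec y(\vec x)$ and $\vec y(\vec x\,')$ lie on opposite faces; as both are in $\Lambda$ and $\chi$ is an SLKKM coloring of $\Lambda$, we get $\tilde\chi(\vec x)=\chi(\vec y(\vec x))\neq\chi(\vec y(\vec x\,'))=\tilde\chi(\vec x\,')$.

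Finally I would apply the \nameref{kkm-lebesgue-variant} to $\tilde\chi$. If $\epsilon\le\rho'$ then $-\tfrac12<\epsilon-\rho'\le0$, so the claimed bound $\ceil{(1+\tfrac{\epsilon-\rho'}{1+(\epsilon-\rho')})^d}$ equals $1$ and the conclusion is trivial (any point of the nonempty set $\Lambda$, with the open $\epsilon$-ball around it, witnesses one color). Otherwise $\epsilon-\rho'\in(0,\infty)$, and the \nameref{kkm-lebesgue-variant} gives $\vec p\in[0,1]^d$ with $\balloinf{\epsilon-\rho'}{\vec p}$ meeting at least $\ceil{(1+\tfrac{\epsilon-\rho'}{1+(\epsilon-\rho')})^d}$ distinct $\tilde\chi$-colors. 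For each such color $c$, pick $\vec z\in\balloinf{\epsilon-\rho'}{\vec p}$ with $\tilde\chi(\vec z)=c$; then $\vec y(\vec z)\in\Lambda$ has $\chi(\vec y(\vec z))=c$ and $\norm{\vec p-\vec y(\vec z)}_\infty\le\norm{\vec p-\vec z}_\infty+\norm{\vec z-\vec y(\vec z)}_\infty<(\epsilon-\rho')+\rho'=\epsilon$, so $\vec y(\vec z)\in\balloinf{\epsilon}{\vec p}\cap\Lambda$. Hence $\balloinf{\epsilon}{\vec p}$ contains points of at least $\ceil{(1+\tfrac{\epsilon-\rho'}{1+(\epsilon-\rho')})^d}$ colors, and the ``in particular'' clause follows from the elementary inequality $1+\tfrac{t}{1+t}\ge1+\tfrac23 t$ valid for $t\in(0,\tfrac12]$ (the bound being trivially $1$ when $t=\epsilon-\rho'\le0$), exactly as in the proof of the \nameref{kkm-lebesgue-variant}. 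The only mildly delicate steps are the $\rho>\tfrac12$ reduction (the reason $\rho'$ rather than $\rho$ appears) and checking that $\tilde\chi$ inherits the SLKKM property; beyond those, the argument is a black-box invocation plus a single triangle inequality, so I do not expect a genuine obstacle.
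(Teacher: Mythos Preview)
Your proposal is correct and follows essentially the same route as the paper: reduce $\rho$ to $\rho'=\min(\rho,\tfrac12)$ via the vertex argument, extend the coloring to all of $[0,1]^d$ by picking a nearby $\Lambda$-point in the minimal face, verify the extension is still SLKKM, apply the \nameref{kkm-lebesgue-variant} with radius $\epsilon-\rho'$, and recover points of $\Lambda$ by a single triangle inequality. The only cosmetic difference is that the paper first disposes of the case where infinitely many colors are used (so that the selection of $\vec y(\vec x)$ can be made without invoking the axiom of choice), whereas you appeal to ``any selection rule'' directly.
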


The proof is given in \Autoref{sec:sperner}.

\begin{remark}\label{equivalence-of-neighborhood-sperner-kkm-lebesgue}
The \nameref{sperner-variant} and the \nameref{kkm-lebesgue-variant} are in fact naturally equivalent. We will prove the \nameref{sperner-variant} as a corollary of the \nameref{kkm-lebesgue-variant} showing one direction of the equivalence. For the other direction, we recover the statement of the \nameref{kkm-lebesgue-variant} from the \nameref{sperner-variant} in the special case that $\Lambda=[0,1]^d$ and $\rho=0$ (because $[0,1]^d$ is $0$-proximate).
\end{remark}

It should be unsurprising that the definition of $\rho$-proximate includes the property that every point of the cube is within distance $\rho$ of a point in $\Lambda$ because we want to show that some point $\vec{p}$ is $\epsilon$-close to many colors, so we need to know that the color classes (and thus the points in $\Lambda$) are not all mutually far apart\footnote{
    A set with this requirement is called a $\rho$-net in some contexts.
}. The condition that such points must belong to the same face may be less obvious but probably not surprising considering the nature of Sperner's lemma and the KKM lemma; the reason we need such a property is demonstrated by the set $\Lambda=(0,1)^d$. If we didn't require that the point $\vec{y}$ in \Autoref{defn:proximate} is in both $F$ and $\Lambda$, then the set $\Lambda=(0,1)^d$ would be $\rho$-proximate for each $\rho\in(0,\infty)$, and yet it would be a valid SLKKM coloring to assign every point of $\Lambda$ the same color, and thus we could not guarantee the existence of an $\epsilon$-ball intersecting more than $1$ color. As a consequence of this requirement in \Autoref{defn:proximate}, we obtain the following simple property.

\begin{remark}\label{vertices-in-proximate-set-remark}
If $\Lambda$ is $\rho$-proximate for some $\rho\in[0,\infty)$, then $\Lambda$ contains all vertices of the cube (i.e. $\Lambda\supseteq\set{0,1}^d$). This is because for any vertex $\vec{v}\in\set{0,1}^d$, the singleton set $F=\prod_{i=1}^d\set{v_i}=\set{\vec{v}}$ is a face of the cube, so by definition of $\rho$-proximate, for each $\vec{x}\in F$, there is some associated $\vec{y}\in F\cap\Lambda$, so in particular $F\cap\Lambda\not=\emptyset$. Since $F$ contains only the point $\vec{v}$, we must have $\vec{v}\in\Lambda$.
\end{remark}



\subsection{Outline}

The remainder of the paper is laid out as follows. In \Autoref{sec:notation}, we briefly state additional notation we use. In \Autoref{sec:proof}, we prove the \nameref{kkm-lebesgue-variant}. In \Autoref{sec:sperner}, we prove the \nameref{sperner-variant} as a corollary. We finish in \Autoref{sec:discussion} with a short discussion of how we hope the bounds might be improved, and we mention some limitations on what is possible. Proofs of lemmas omitted from the main text are included in \Autoref{sec:appendix}. For completeness, we include a proof of the folklore \namefullref{lower-bound-cover-number-Rd} in \Autoref{appendix:measure-theory} which is stated later and is central to the proof of the \nameref{kkm-lebesgue-variant}.

\subsection{Notation}
\label{sec:notation}

We write $\ballostd{\epsilon}{\vec{p}}$ (resp. $\balloinf{\epsilon}{\vec{p}}$) to indicate the open ball of radius $\epsilon$ at $\vec{p}$ with respect to a generic norm $\norm{\cdot}$ (resp. the $\ell_\infty$ norm). Similarly, we use
$\ballcstd{\epsilon}{\vec{p}}$ and $\ballcinf{\epsilon}{\vec{p}}$ for closed balls.
We use $\unitballmeasurestd$
to denote the volume of the unit radius ball in dimension $d$ with respect to a generic norm $\norm{\cdot}$.
We write $m$ for the Lebesgue measure and $m_{out}$ for the induced outer measure. For two sets $A,B\subseteq\R^d$, we write $A+B$ to indicate the Minkowski sum $A+B=\set*{\vec{a}+\vec{b}:\vec{a}\in A\text{ and }\vec{b}\in B}$. 

\section{Proof of the \nameref*{kkm-lebesgue-variant}}
\label{sec:proof}

We will require a few lemmas for the proof which we state here. Proofs are provided in \Autoref{appendix-sub:brunn-minkowski}. The first result will later allow us to pass a result through a limit because the answer will be an integer.

\begin{restatable*}{fact}{restatableSmallerCeiling}\label{smaller-ceiling}
For any $\alpha\in\R$, there exists $\gamma\in\R$ such that $\gamma<\alpha$ and $\ceil{\gamma}=\ceil{\alpha}$.
\end{restatable*}

The second result is an inequality that can be proved using basic calculus. The interpretation of the result is that (for appropriate parameters) in the expression $(x^{1/d}+\alpha)^d$ we can ``approximately factor out'' the $x$ term because there is both a $d$th root and $d$th power involved to get the no larger expression $x(1+\alpha)^d$.
\begin{restatable*}{lemma}{restatableBrunnMinkowskiBoundLemma}\label{brunn-minkowski-bound-lemma}
For $d\in[1,\infty)$ and $x\in[0,1]$ and $\alpha\in[0,\infty)$, it holds that $(x^{1/d}+\alpha)^d\geq x(1+\alpha)^d$.
\end{restatable*}

The third result is a consequence of the \nameref{generalized-brunn-minkowski-inequality} (discussed in \Autoref{appendix-sub:brunn-minkowski}) which says that for non-empty measurable sets $A,B\subseteq\R^d$ for which $A+B$ is also measurable, it holds that $m(A+B)\geq\big(m(A)^{\frac1d}+m(B)^{\frac1d}\big)^d$. The main content of our corollary is dealing with non-measurable sets in such a way that we can apply the \nameref{generalized-brunn-minkowski-inequality}.
\begin{restatable*}{corollary}{restatableOuterMeasureBrunnMinkowskiBoundLInfty}\label{outer-measure-brunn-minkowsi-bound-l-infty}
Let $d\in\N$ and $Y\subseteq\R^d$ and $\epsilon\in(0,\infty)$. Then $Y+\balloinf{\epsilon}{\vec{0}}$ is open (and thus Borel measurable), and $m(Y+\balloinf{\epsilon}{\vec{0}})\geq \left(m_{out}(Y)^\frac1d+2\epsilon\right)^d$.
\end{restatable*}

The final result we need is heavily based on the ideas behind the common proof of Blitchfeldt's theorem and can be viewed as a continuous version of the pigeonhole principle with multiplicity. It says that if we have a family of subsets of $S$ and in total their measure is at least $k\cdot m(S)$, then there is a point belonging to at least $\ceil{k}$ sets in the family. While this is a known result, it is frequently enough stated informally or only in the (very simple) special case $\ceil{k}=2$ that for completeness and convenience we include a proof in \Autoref{appendix:measure-theory}.

\begin{restatable*}[Continuous Multi-Pigeonhole Principle]{proposition-known}{restatableLowerBoundCoverNumberRd}\label{lower-bound-cover-number-Rd}
  Let $d\in\N$ and $S\subset\R^d$ be measurable with finite measure. Let $\A$ be a family of measurable subsets of $S$ and let $k=\ceil{\frac{\sum_{A\in\A}m(A)}{m(S)}}$. If $k<\infty$, then there exists $\vec{p}\in S$ such that $\vec{p}$ belongs to at least $k$ members of $\A$. If $k=\infty$, then for any integer $n$, then there exists $\vec{p}\in S$ such that $\vec{p}$ belongs to at least $n$ members of $\A$.
\end{restatable*}

Now we are ready to prove the main result of this work restated below. The proof is illustrated in \Autoref{fig:sperner-kkm-coloring}.

\begin{figure}
\sbox1{%
    \begin{subfigure}{.3\textwidth}
        \includegraphics[width=\textwidth]{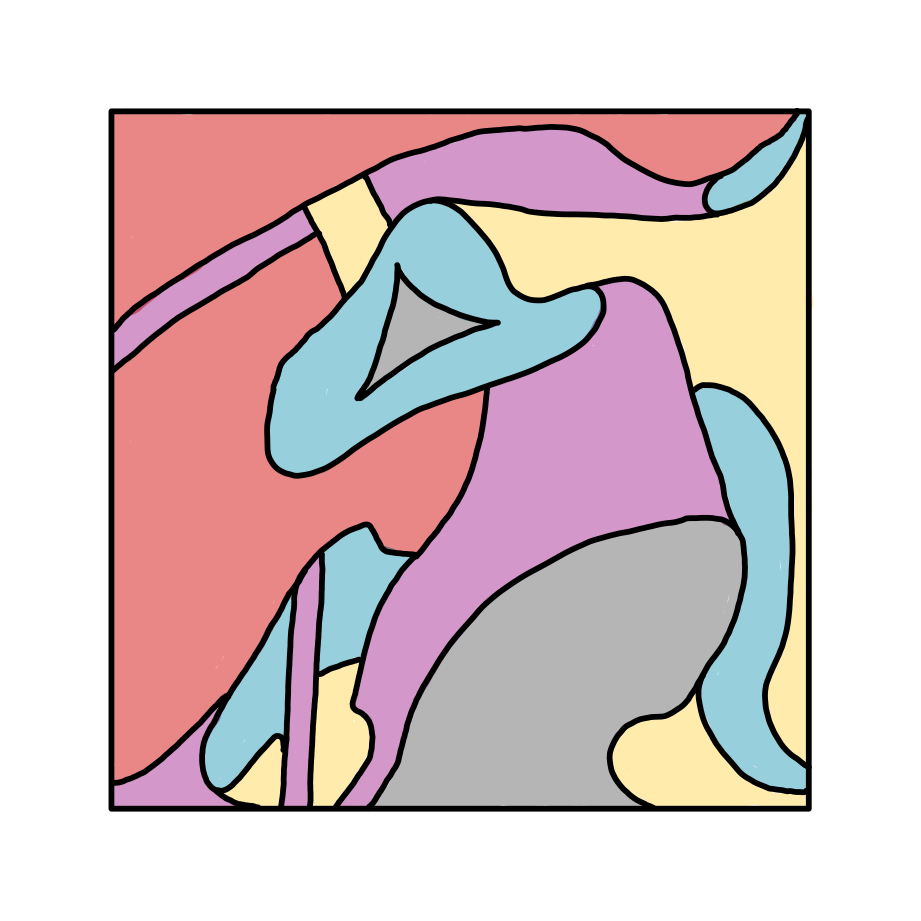}
        \subcaption{Initial coloring $\chi$}
        \label{subfig:1_non_spanning_coloring}
    \end{subfigure}
}
\sbox2{%
    \begin{subfigure}{.3\textwidth}
        \includegraphics[width=\textwidth]{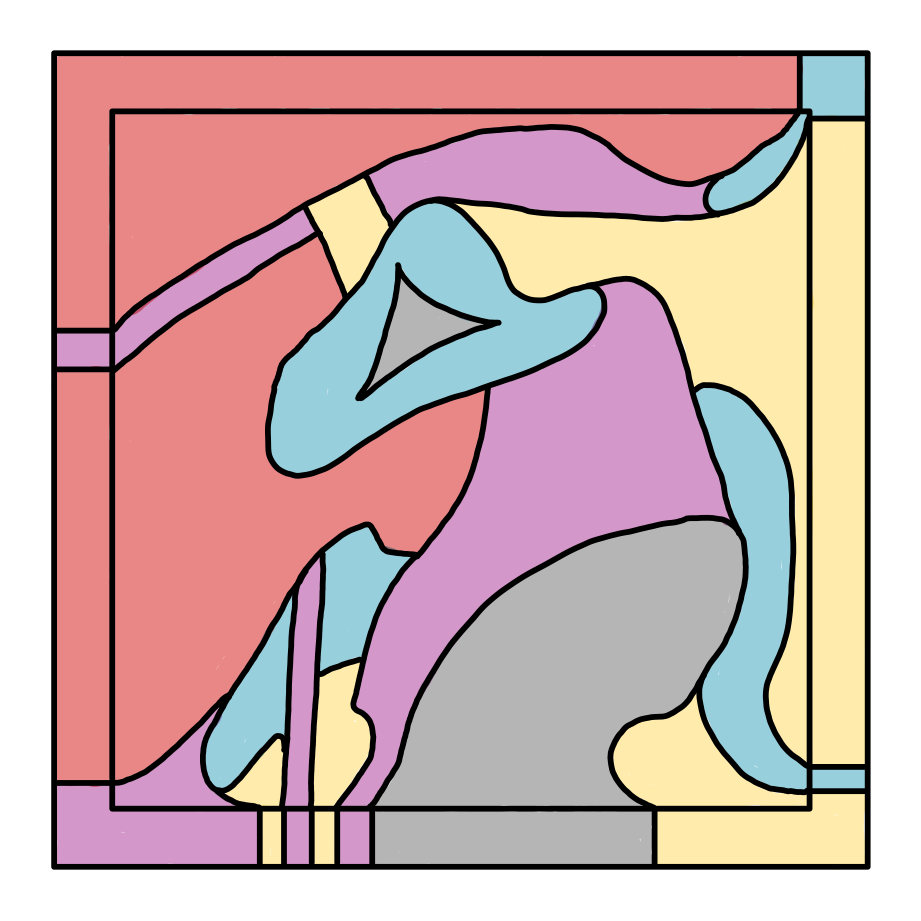}
        \subcaption{Extended coloring $\gamma$}
        \label{subfig:2_extended_coloring}
    \end{subfigure}
}
\sbox3{
    \begin{subfigure}{.3\textwidth}
        \includegraphics[width=\textwidth]{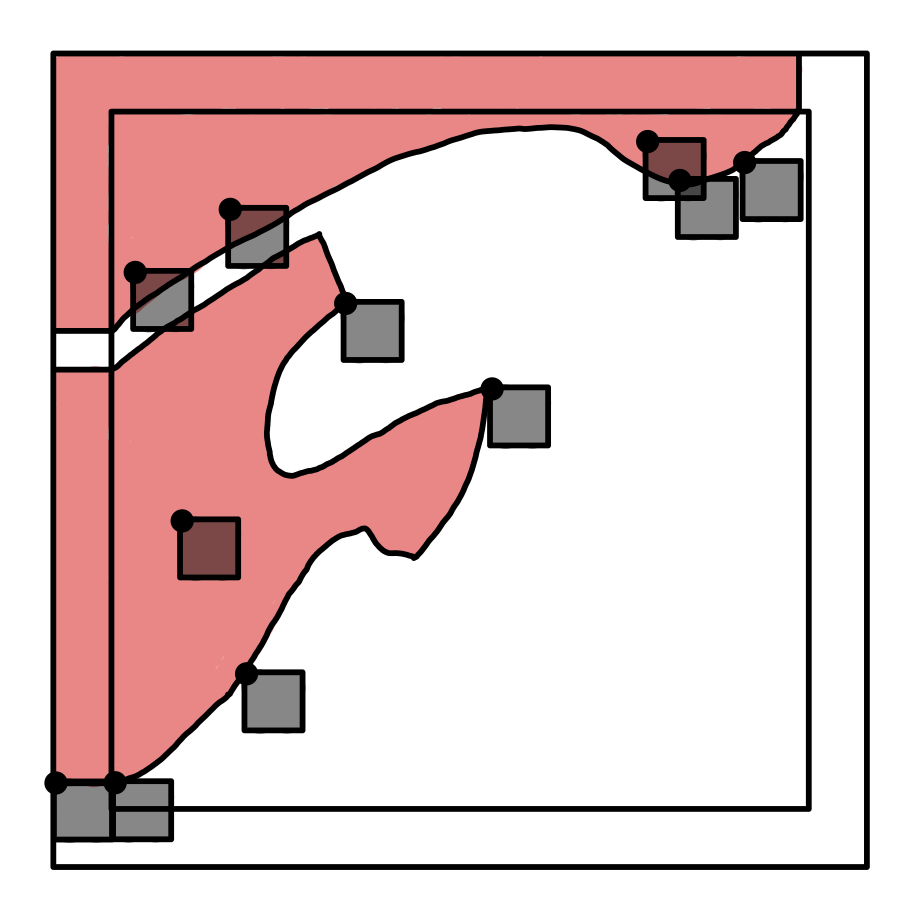}
        \subcaption{Red points ($Y_{\text{red}}$)}
        \label{subfig:3_red}
    \end{subfigure}
}
\sbox4{
    \begin{subfigure}{.3\textwidth}
        \includegraphics[width=\textwidth]{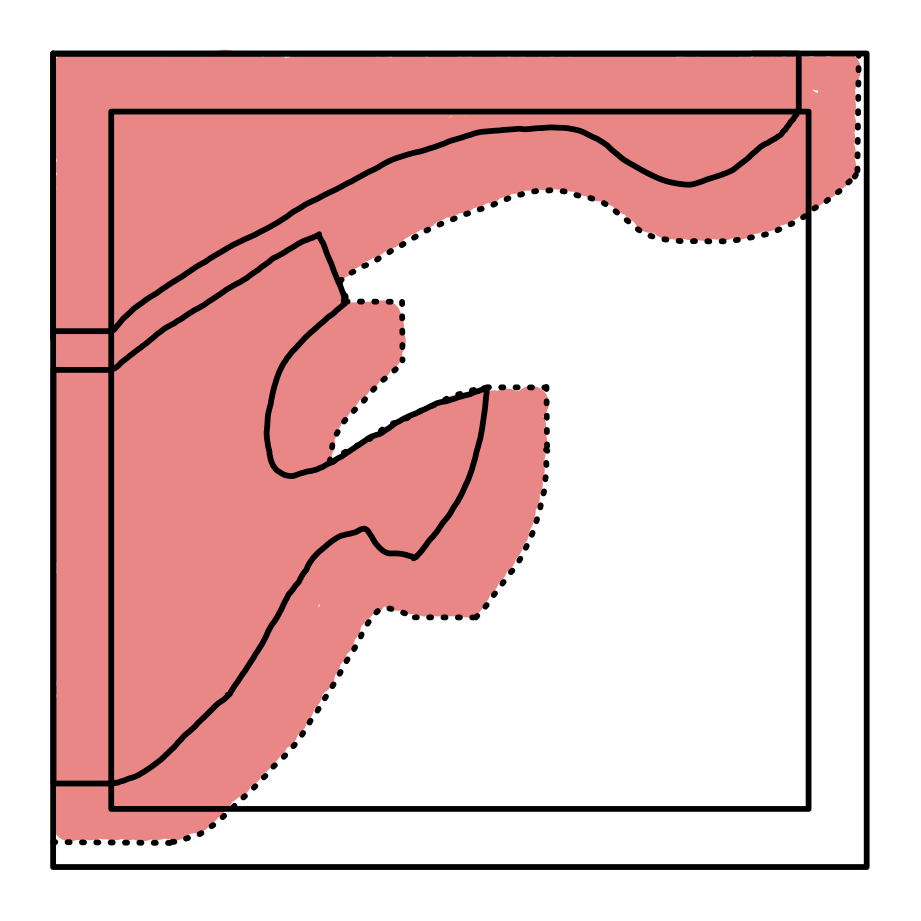}
        \subcaption{Ball added ($Y_{\text{red}}+B_{\vec{v}}$)}
        \label{subfig:4_red}
    \end{subfigure}
}
\sbox5{
    \begin{subfigure}{.3\textwidth}
        \includegraphics[width=\textwidth]{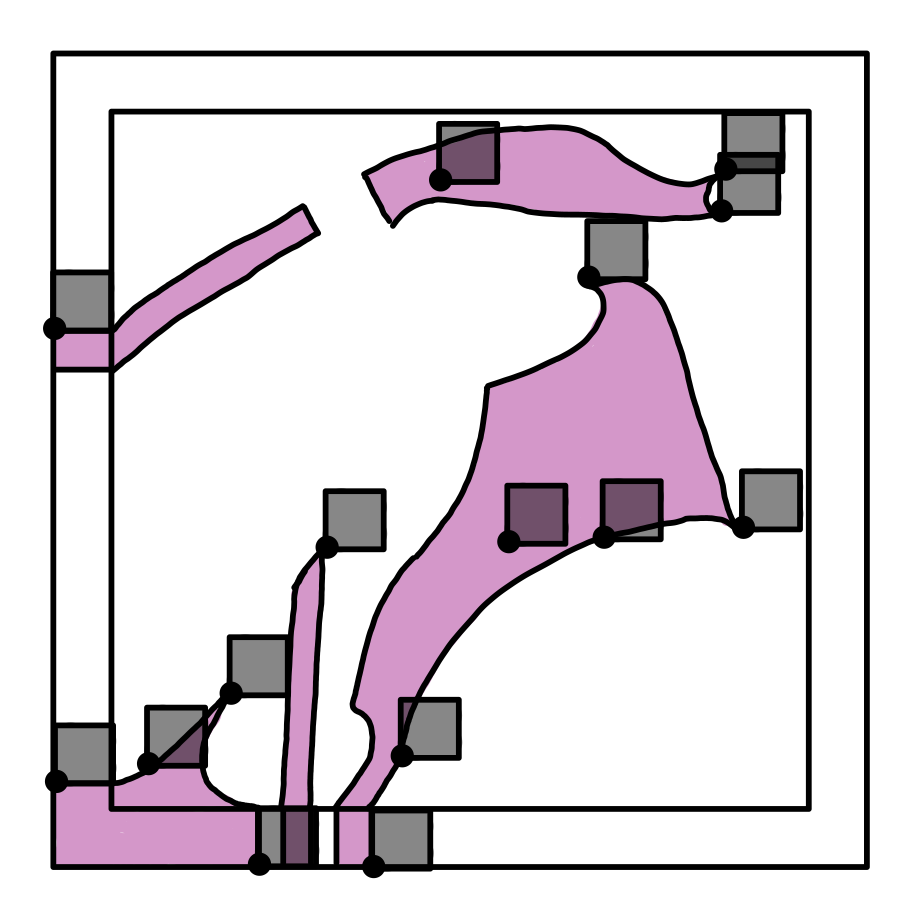}
        \subcaption{Purple points ($Y_{\text{purple}}$)}
        \label{subfig:5_purple}
    \end{subfigure}
}
\sbox6{
    \begin{subfigure}{.3\textwidth}
        \includegraphics[width=\textwidth]{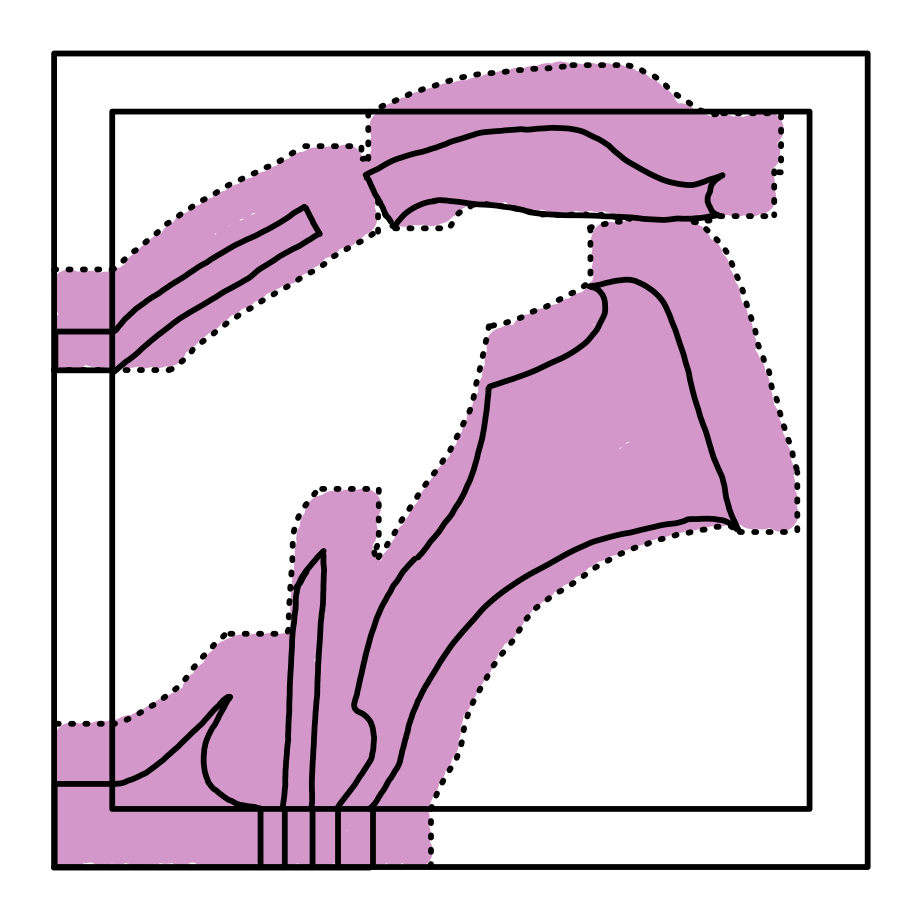}
        \subcaption{Ball added ($Y_{\text{purple}}+B_{\vec{v}}$)}
        \label{subfig:6_purple}
    \end{subfigure}
}
\sbox7{
    \begin{subfigure}{.3\textwidth}
        \includegraphics[width=\textwidth]{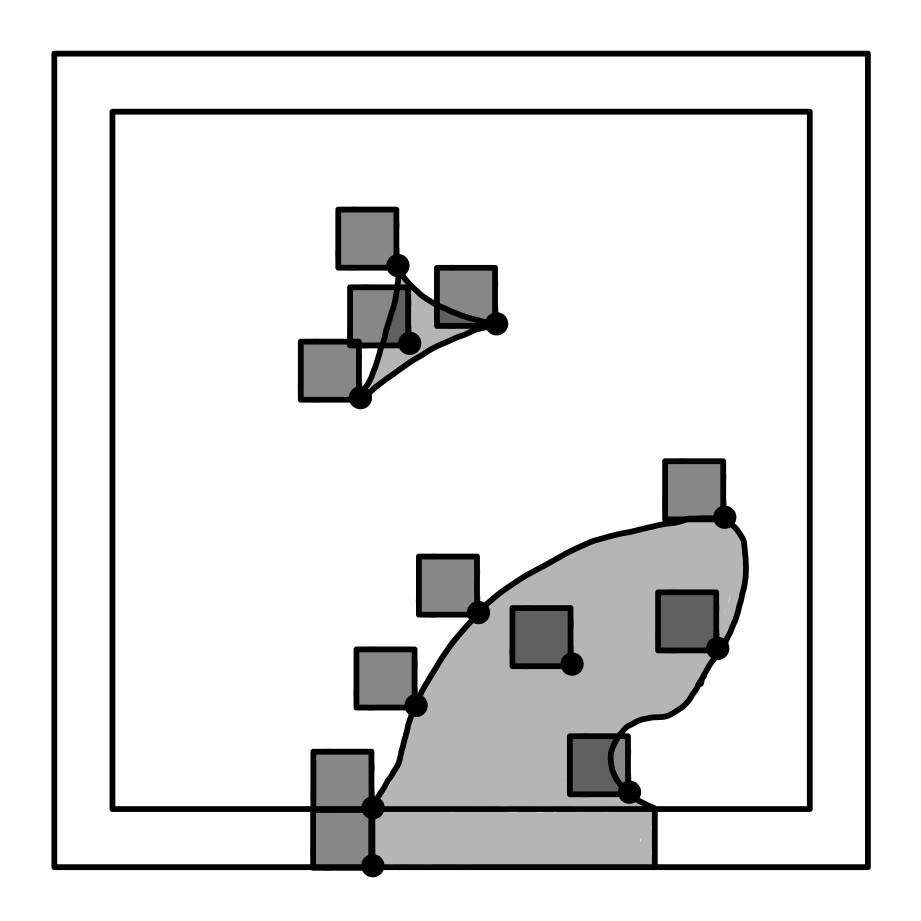}
        \subcaption{Gray points ($Y_{\text{gray}}$)}
        \label{subfig:7_gray}
    \end{subfigure}
}
\sbox8{
    \begin{subfigure}{.3\textwidth}
        \includegraphics[width=\textwidth]{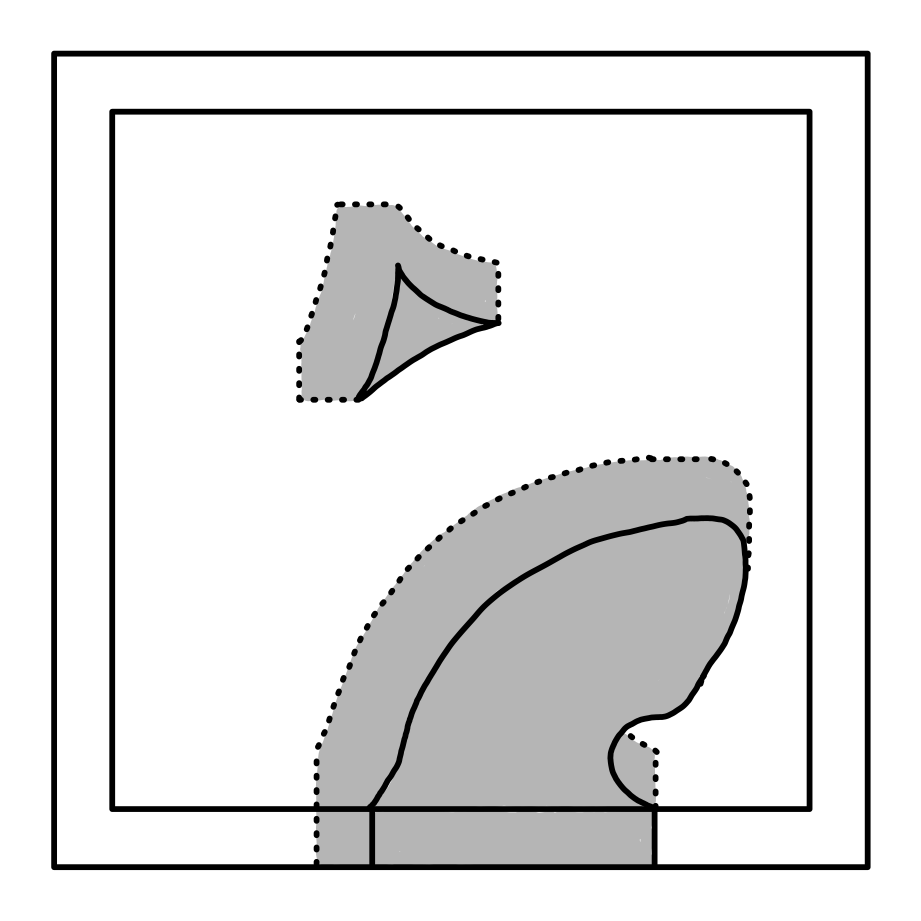}
        \subcaption{Ball added ($Y_{\text{gray}}+B_{\vec{v}}$)}
        \label{subfig:8_gray}
    \end{subfigure}
}
\sbox0{%
    \begin{subfigure}{.15\textwidth}
        \includegraphics[width=\textwidth]{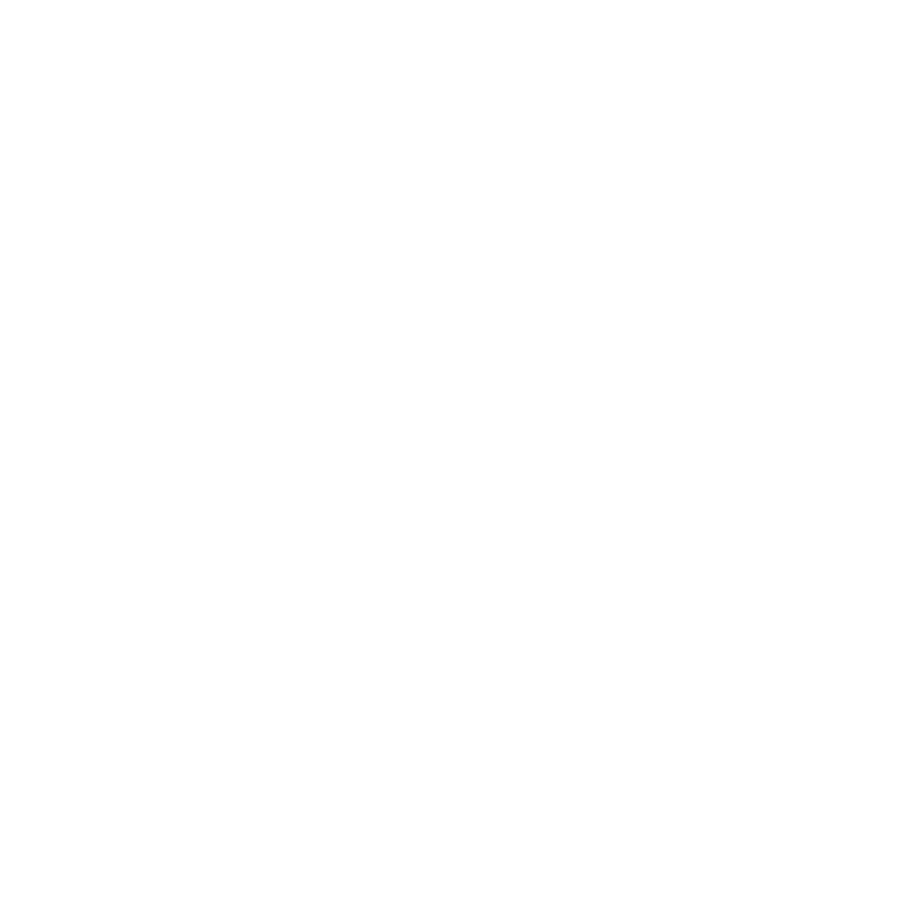}
    \end{subfigure}
}

\centering
\usebox0 \hfill \usebox1 \hfill \usebox2 \hfill \usebox0
\usebox3 \hfill \usebox5 \hfill \usebox7
\usebox4 \hfill \usebox6 \hfill \usebox8

\caption{
\subref{subfig:1_non_spanning_coloring} shows an SLKKM coloring $\chi$ of the unit cube $[-\frac12,\frac12]^d$ for $d=2$ (i.e. no color includes points on opposite edges).
\subref{subfig:2_extended_coloring} shows the natural extension $\gamma$ of that coloring to $[-\frac12-\epsilon,\frac12+\epsilon]^d$. The extension is obtained by mapping each point $\vec{y}\in[-\frac12-\epsilon,\frac12+\epsilon]^d$ to the point $\vec{x}\in[-\frac12,\frac12]^d$ for which each coordinate value is restricted to be within $[-\frac12,\frac12]$, and then $\vec{y}$ is given whatever color $\vec{x}$ had. \subref{subfig:3_red}, \subref{subfig:5_purple}, and \subref{subfig:7_gray} show three of the five colors and demonstrate that there is at least one quadrant of the $\epsilon$-ball that can be Minkowski summed with the color so that the sum remains a subset of the extended cube. For red it is the lower right quadrant, for purple it is the upper right, and for gray it could be the upper left (shown) or the upper right. \subref{subfig:4_red}, \subref{subfig:6_purple}, and \subref{subfig:8_gray} show the resulting Minkowski sum for each color. Utilizing the Brunn-Minkowski inequality, this set will have substantially greater area---by a factor of at least $(1+\frac{\epsilon}{1+\epsilon})^d$.
}

\label{fig:sperner-kkm-coloring}

\end{figure}

\restatableNeighborhoodVariant*
\begin{proof}
For convenience, we will assume that the cube is $[-\frac12,\frac12]^d$ rather than $[0,1]^d$. Let $C$ be a set (of colors) and $\chi\colon[-\frac12,\frac12]^d\to C$ be an SLKKM coloring of the unit cube $[-\frac12,\frac12]^d$. Let $C'=\range(\chi)$ so that we know every color in $C'$ appears for some point in the cube.

We first deal with the case where $C'$ has infinite cardinality\footnote{ 
    If one accepts the axiom of choice, then we don't need to deal with this as a special case, but by doing so, we can avoid requiring the axiom of choice in the proof. 
}. If $C'$ has infinite cardinality, then because we can cover the cube with finitely many $\epsilon$-balls, one of these balls must contain points of infinitely many colors, so the result is true. Thus, we assume from now on that $C'$ has finite cardinality.

For each color $c\in C'$ we will let $X_c$ denote the set of points assigned color $c$ by $\chi$---that is, $X_c=\chi^{-1}(c)$. Note that the hypothesis that no color includes points of opposite faces formally means that that for every color $c\in C'$, the set $X_c$ has the property that for each coordinate $i\in[d]$, the projection $\pi_i(X_c)=\set{x_i:\vec{x}\in X_c}$ does not contain both $-\frac12$ and $\frac12$.

The first step in the proof is to extend the coloring $\chi$ to the larger cube $[-\frac12-\epsilon,\frac12+\epsilon]^d$ in a natural way. Consider the following function $f$ which truncates points in the larger interval to be in the smaller interval:
\begin{align*}
&f\colon[-\tfrac12-\epsilon,\tfrac12+\epsilon]\to[-\tfrac12,\tfrac12] \\
&f(y)\defeq\begin{cases}
-\frac12 & y\leq-\frac12 \\
y        & y\in(-\frac12,\frac12) \\
\frac12 & y\geq\frac12
\end{cases}
\end{align*}
Let $\vec{f}\colon[-\tfrac12-\epsilon,\tfrac12+\epsilon]^d\to[-\tfrac12,\tfrac12]^d$ be the function which is $f$ in each coordinate: $\vec{f}(\vec{y})\defeq\langle f(y_i)\rangle_{i=1}^d$.

Now extend the coloring $\chi$ to the coloring $\gamma\colon[-\frac12-\epsilon,\frac12+\epsilon]^d\to C'$ defined by
\[
\gamma(\vec{x}) \defeq \chi\big(\vec{f}\left(\vec{x}\right)\big).
\]

For each color $c\in C'$, let $Y_c=\gamma^{-1}(c)$ denote the points assigned color $c$ by $\gamma$ and note that $X_{c}\subseteq Y_{c}$. Consistent with this notation, we will typically refer to a point in the unit cube as $\vec{x}$ and a point in the extended cube as $\vec{y}$.

We make the following claim which implies that for each color $c\in C'$, the set $Y_c$ of points of that color in the extended coloring are contained in a set bounded away from one side of the extended cube $[-\frac12-\epsilon,\frac12+\epsilon]^d$ in each coordinate.

\begin{subclaim}\label{kkm-lebesgue-variant-subclaim-1}
For each color $c\in C'$ there exists an orientation $\vec{v}\in\set{-1,1}^d$ such that $Y_c\subseteq\prod_{i=1}^d v_i\cdot(-\tfrac12,\tfrac12+\epsilon]$.
\end{subclaim}
\begin{subclaimproof}
Fix an arbitrary coordinate $i\in[d]$. Note that for every $\vec{y}\in Y_c$ we have $\vec{f}(\vec{y})\in X_c$ which is to say that the $\vec{y}$ has the same color in the extended coloring as $f(\vec{y})$ does in the original coloring (see justification\footnote{\label{footnote:matching-colors}
    For every $\vec{y}\in Y_c$ we have (by definition of $Y_c$) that $\gamma(\vec{y})=c$ and (by definition of $\gamma$) that $\gamma(\vec{y})=\chi(\vec{f}(\vec{y}))$ showing that $\chi(f(\vec{y}))=c$ and thus (by definition of $X_c$) that $f(\vec{y})\in X_c$. 
}).

Note that if there is some $\vec{y}\in Y_c$ with $y_i\leq-\frac12$, then $f(y_i)=-\frac12$ so the fact that $X_c\ni \vec{f}(\vec{y})$ implies that $\pi_i(X_c)\ni f(y_i)=-\frac12$. Similarly, if there is some $\vec{y}\in Y_c$ with $y_i\geq\frac12$, then $\pi_i(X_c)\ni\frac12$. Recall that by hypothesis, $\pi_i(X_c)$ does not contain both $-\frac12$ and $\frac12$ which means it is either the case that for all $\vec{y}\in Y_c$ we have $y_i>-\frac12$ (so $\pi_i(Y_c)\subseteq(-\frac12,\frac12+\epsilon]$) or it is the case that for all $\vec{y}\in Y_c$ we have $y_i<\frac12$ (so $\pi_i(Y_c)\subseteq[-\frac12-\epsilon,\frac12)$).

Thus we can choose $v_i\in\set{-1,1}$ such that $\pi_i(Y_c)\subseteq v_i\cdot(-\frac12,\frac12+\epsilon]$. Since this is true for each coordinate $i\in[d]$ we can select $\vec{v}\in\set{-1,1}^d$ such that
\[
Y_c \subseteq \prod_{i=1}^d\pi_i(Y_c) \subseteq \prod_{i=1}^d v_i\cdot(-\tfrac12,\tfrac12+\epsilon]
\]
as claimed.
\end{subclaimproof}

For an orientation $\vec{v}\in\set{-1,1}^d$, let $B_{\vec{c}}$ denote the set $B_{\vec{v}}\defeq\prod_{i=1}^d -v_i\cdot(0,\epsilon)$ which should be interpreted as a on open orthant of the $\ell_\infty$ $\epsilon$-ball centered at the origin---specifically the orthant opposite the orientation $\vec{v}$. Building on \Autoref{kkm-lebesgue-variant-subclaim-1}, we get the following:

\begin{subclaim}\label{subclaim-2}
For each color $c\in C'$, there exists an orientation $\vec{v}\in\set{-1,1}^d$ such that $Y_c+B_{\vec{v}} \subseteq [-\tfrac12-\epsilon,\tfrac12+\epsilon]^d$.
\end{subclaim}
\begin{subclaimproof}
Let $\vec{v}$ be an orientation given in \Autoref{kkm-lebesgue-variant-subclaim-1} for color $c$. We get the following chain of containments:
\begin{align*}
Y_c+B_{\vec{v}} &= Y_c + \left(\prod_{i=1}^d -v_i\cdot(0,\epsilon)\right) \tag{Def'n of $B_{\vec{v}}$} \\
&\subseteq \left( \prod_{i=1}^d v_i\cdot(-\tfrac12,\tfrac12+\epsilon] \right) + \left(\prod_{i=1}^d -v_i\cdot(0,\epsilon)\right) \tag{\Autoref{kkm-lebesgue-variant-subclaim-1}} \\
&= \left( \prod_{i=1}^d v_i\cdot(-\tfrac12,\tfrac12+\epsilon] \right) + \left(\prod_{i=1}^d v_i\cdot(-\epsilon,0)\right) \tag{Factor a negative} \\
&= \prod_{i=1}^d v_i\cdot(-\tfrac12-\epsilon,\tfrac12+\epsilon) \tag{Minkowski sum of rectangles} \\
%
&\subseteq [-\tfrac12-\epsilon,\tfrac12+\epsilon]^d \tag{$v_i\in\set{-1,1}$}.
\end{align*}
This proves the claim.
\end{subclaimproof}

We also claim that $Y_c+B_{\vec{v}}$ has substantial measure.

\begin{subclaim}\label{subclaim-3}
For each color $c\in C'$ and any orientation $\vec{v}\in\set{-1,1}^d$, the set $Y_c+B_{\vec{v}}$ is Borel measurable and $m(Y_c+B_{\vec{v}})\geq m_{out}(Y_c) \cdot \left(1+\frac{\epsilon}{1+\epsilon}\right)^d$.
\end{subclaim}
\begin{subclaimproof}
Let $M=(1+\epsilon)^d$ which is the measure of $\prod_{i=1}^d v_i\cdot(-\tfrac12,\tfrac12+\epsilon]$, and because by \Autoref{kkm-lebesgue-variant-subclaim-1}, $Y_c$ is a subset of some such set, we have $m_{out}(Y_c)\leq M$.

We have that $Y_c+B_{\vec{v}}$ is Borel measurable and that $m\left(Y_c+B_{\vec{v}}\right)\geq \left(m_{out}(Y_c)^\frac1d+\epsilon\right)^d$ by \Autoref{outer-measure-brunn-minkowsi-bound-l-infty} (because $B_{\vec{v}}$ is some translation of $\balloinf{\frac\epsilon2}{\vec{0}}$ and translations are irrelevant to the measure concerns of \Autoref{outer-measure-brunn-minkowsi-bound-l-infty}). Thus, we have the following chain of inequalities:
\begin{align*}
    m(Y_c+B_{\vec{v}}) &\geq \left(m_{out}(Y_c)^{1/d} + \epsilon\right)^d \tag{Above}\\
    &= M\cdot \left(\frac{m_{out}(Y_c)^{1/d}}{M^{1/d}} + \frac{\epsilon}{M^{1/d}}\right)^d \tag{Factor out $M$} \\
    &\geq M\cdot \left(\frac{m_{out}(Y_c)}{M}\right) \cdot \left(1+\frac{\epsilon}{M^{1/d}}\right)^d \tag{\Autoref{brunn-minkowski-bound-lemma}} \\
    &= m_{out}(Y_c) \cdot \left(1+\frac{\epsilon}{1+\epsilon}\right)^d \tag{Simplify and use $M=(1+\epsilon)^d$}
\end{align*}
\end{subclaimproof}

Now, consider the indexed family $\A=\set{Y_c+B_{\vec{v}(c)}}_{c\in C'}$ (where $\vec{v}(c)$ is an orientation for $c$ as in \Autoref{kkm-lebesgue-variant-subclaim-1} and \Autoref{subclaim-2}) noting that this family has finite cardinality because $C'$ has finite cardinality. Considering the sum of measures of sets in $\A$, we have the following:
\begin{align*}
    \sum_{A\in\A}m(A) &= \sum_{c\in C'}m\left(Y_c+B_{\vec{v}(c)}\right) \tag{Def'n of $\A$; measurability was shown above}\\
    &\geq \left(1+\frac{\epsilon}{1+\epsilon}\right)^d\cdot\sum_{c\in C'}m_{out}(Y_c) \tag{\Autoref{subclaim-3} and linearity of summation}\\
    &\geq \left(1+\frac{\epsilon}{1+\epsilon}\right)^d\cdot m_{out}\left(\bigcup_{c\in C'}Y_c\right) \tag{Countable/finite subaddativity of outer measures}\\
    &= \left(1+\frac{\epsilon}{1+\epsilon}\right)^d\cdot m_{out}\left([-\tfrac12-\epsilon,\tfrac12+\epsilon]^d\right) \tag{The $Y_c$'s partition $[-\tfrac12-\epsilon,\tfrac12+\epsilon]^d$}\\
    &= \left(1+\frac{\epsilon}{1+\epsilon}\right)^d\cdot (1+2\epsilon)^d \tag{Evaluate outer measure}\\
\end{align*}

By \Autoref{subclaim-2}, each member of $\A$ is a subset of $[-\frac12-\epsilon,\frac12+\epsilon]^d$, so by \Autoref{lower-bound-cover-number-Rd}, there exists a point $\vec{p}\in[-\tfrac12-\epsilon,\tfrac12+\epsilon]^d$ that belongs to at least
\[
\ceil{\frac{\left(1+\frac{\epsilon}{1+\epsilon}\right)^d\cdot (1+2\epsilon)^d}{(1+2\epsilon)^d}} = \ceil{\left(1+\frac{\epsilon}{1+\epsilon}\right)^d}
\]
sets in $\A$. That is, $\vec{p}$ belongs to $Y_c+B_{\vec{v}(c)}$ for at least $\ceil{\left(1+\frac{\epsilon}{1+\epsilon}\right)^d}$ colors $c\in C'$. For each such color $c$, it follows that $\vec{p}+(-\epsilon,\epsilon)^d$ intersects $Y_c$ (see justification\footnote{
    If $\vec{p}\in Y_c+B_{\vec{v}(c)} \subseteq Y_c + (-\epsilon,\epsilon)^d$, then by definition of Minkowski sum there exists $\vec{y}\in Y_c$ and $\vec{b}\in (-\epsilon,\epsilon)^d$ such that $\vec{p}=\vec{y}+\vec{b}$ so $Y_c\ni \vec{y}=\vec{p}-\vec{b}$ and also $\vec{p}-\vec{b}\in \vec{p}+(-\epsilon,\epsilon)^d$ demonstrating that these two sets contain a common point.
}). Note that $\vec{p}+(-\epsilon,\epsilon)^d=\balloinf{\epsilon}{\vec{p}}$ showing that $\balloinf{\epsilon}{\vec{p}}$ contains points of at least $\ceil{\left(1+\frac{\epsilon}{1+\epsilon}\right)^d}$ colors (according to the coloring of $\gamma$ since we are discussing sets $Y_c$).

What we really want, though, is a point in the unit cube that has this property rather than a point in the extended cube, and we want it with respect to the original coloring $\chi$ rather than the extended coloring $\gamma$. We will show that the point $\vec{f}\left(\vec{p}\right)$ suffices.

\begin{subclaim}\label{kkm-lebesgue-variant-subclaim-4}
If $c\in C'$ is a color for which $\balloinf{\epsilon}{\vec{p}}\cap Y_c \not=\emptyset$, then also $\balloinf{\epsilon}{\vec{f}\left(\vec{p}\right)}\cap X_c \not=\emptyset$.
\end{subclaim}
\begin{subclaimproof}
Let $\vec{y}\in \balloinf{\epsilon}{\vec{p}}\cap Y_{\vec{c}}$. Then because $\vec{y}\in \balloinf{\epsilon}{\vec{p}}$, we have $\norm{\vec{y}-\vec{p}}_\infty<\epsilon$, so for each coordinate $i\in[d]$ we have $\abs{y_i-p_i}<\epsilon$. It is easy to analyze the $9$ cases (or $3$ by symmetries) arising in the definition of $f$ to see that this implies $\abs{f(y_i)-f(p_i)}<\epsilon$ as well (i.e. $f$ maps pairs of values in its domain so that they are no farther apart), thus $\norm{\vec{f}\left(\vec{y}\right)-\vec{f}\left(\vec{p}\right)}_\infty<\epsilon$ and thus $\vec{f}\left(\vec{y}\right)\in \balloinf{\epsilon}{\vec{f}\left(\vec{p}\right)}$.

Also, as justified in a prior footnote\footref{footnote:matching-colors}, for any $\vec{y}\in Y_c$ we have $\vec{f}(\vec{y})\in X_c$ so that $\vec{f}\left(\vec{y}\right)\in\balloinf{\epsilon}{\vec{f}\left(\vec{p}\right)}\cap X_c$ which shows that the intersection is non-empty.
\end{subclaimproof}

Thus, because $\balloinf{\epsilon}{\vec{p}}$ intersects $Y_c$ for at least $\ceil{\left(1+\frac{\epsilon}{1+\epsilon}\right)^d}$ choices of color $c\in C'$, by \Autoref{kkm-lebesgue-variant-subclaim-4} $\vec{f}\left(\vec{p}\right)$ is a point in the unit cube for which $\balloinf{\epsilon}{\vec{f}\left(\vec{p}\right)}$ intersects $X_c$ for at least $\ceil{\left(1+\frac{\epsilon}{1+\epsilon}\right)^d}$ different colors $c\in C'$. That is, this ball contains points from at least this many of the original color sets.

The final step in the proof of the theorem is to clean up the expression with an inequality. Note that $C'$ must contain of at least $2^d$ colors because each of the $2^d$ corners of the unit cube must be assigned a unique color since any pair of corners belong to an opposite pair of faces on the cube. For this reason it is trivial that for $\epsilon>\frac12$ there is a point $\vec{p}$ such that $\balloinf{\epsilon}{\vec{p}}$ intersects at least $2^d$ colors: just let $\vec{p}$ be the midpoint of the unit cube. Thus, the only interesting case is $\epsilon\in(0,\frac12]$, and for such $\epsilon$ we have $1+\epsilon\leq\frac32$ and thus $\frac{\epsilon}{1+\epsilon}\geq\frac23\epsilon$ showing that $\left(1+\frac{\epsilon}{1+\epsilon}\right)^d\geq (1+\frac23\epsilon)^d$ which completes the proof.

\end{proof}
\section{Proof of the \nameref*{sperner-variant}}
\label{sec:sperner}

Now we restate and prove the \nameref{sperner-variant}. Basically, we assign each point of $[0,1]^d$ to a color of a point in $\Lambda$ nearby in a careful way and show that the resulting coloring still doesn't use the same color on opposite faces. We then use the \nameref{kkm-lebesgue-variant} to find the point $\vec{p}$ and pass back the result because $\Lambda$ approximates $[0,1]^d$. Part of the proof is illustrated in \Autoref{fig:proof-of-sperner-variant}.

\begin{figure}
\sbox1{%
    \begin{subfigure}{.3\textwidth}
        \includegraphics[width=\textwidth]{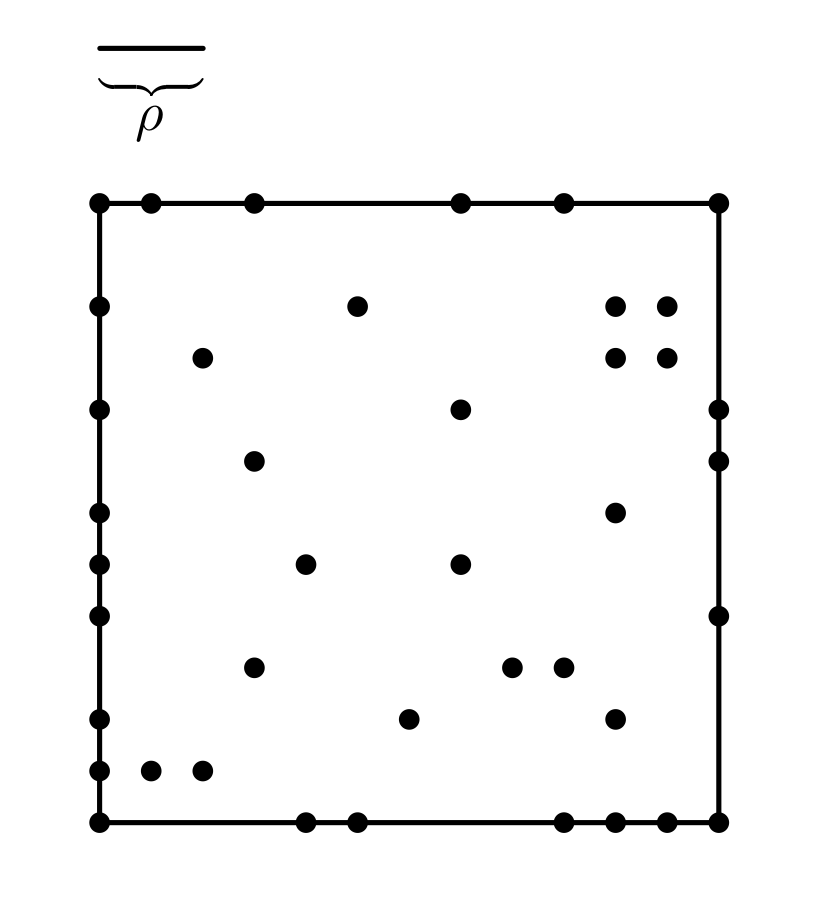}
        \subcaption{A $\rho$-proximate set $\Lambda$ with $\rho=\frac16$}
        \label{subfig:1}
    \end{subfigure}
}
\sbox2{%
    \begin{subfigure}{.3\textwidth}
        \includegraphics[width=\textwidth]{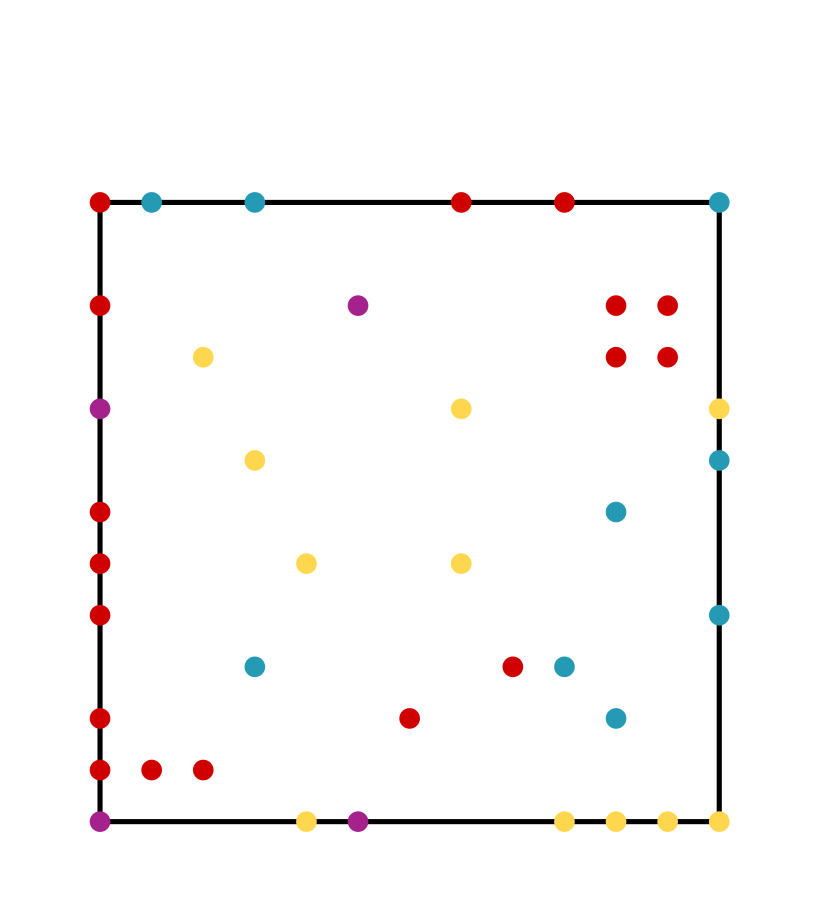}
        \subcaption{Coloring $\chi:\Lambda\to\set{R,B,P,Y}$}
        \label{subfig:2}
    \end{subfigure}
}
\sbox3{
    \begin{subfigure}{.3\textwidth}
        \includegraphics[width=\textwidth]{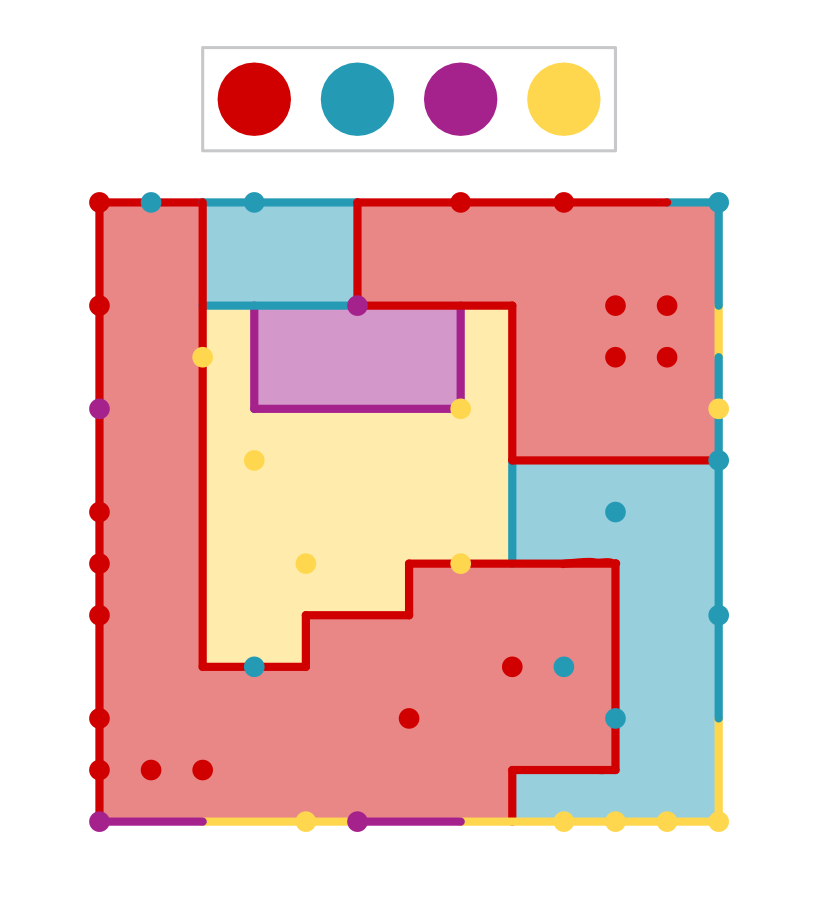}
        \subcaption{Sperner/KKM $\gamma:[0,1]^2\to\set{0,1}^2$}
        \label{subfig:3}
    \end{subfigure}
}
\sbox4{
    \begin{subfigure}{.3\textwidth}
        \includegraphics[width=\textwidth]{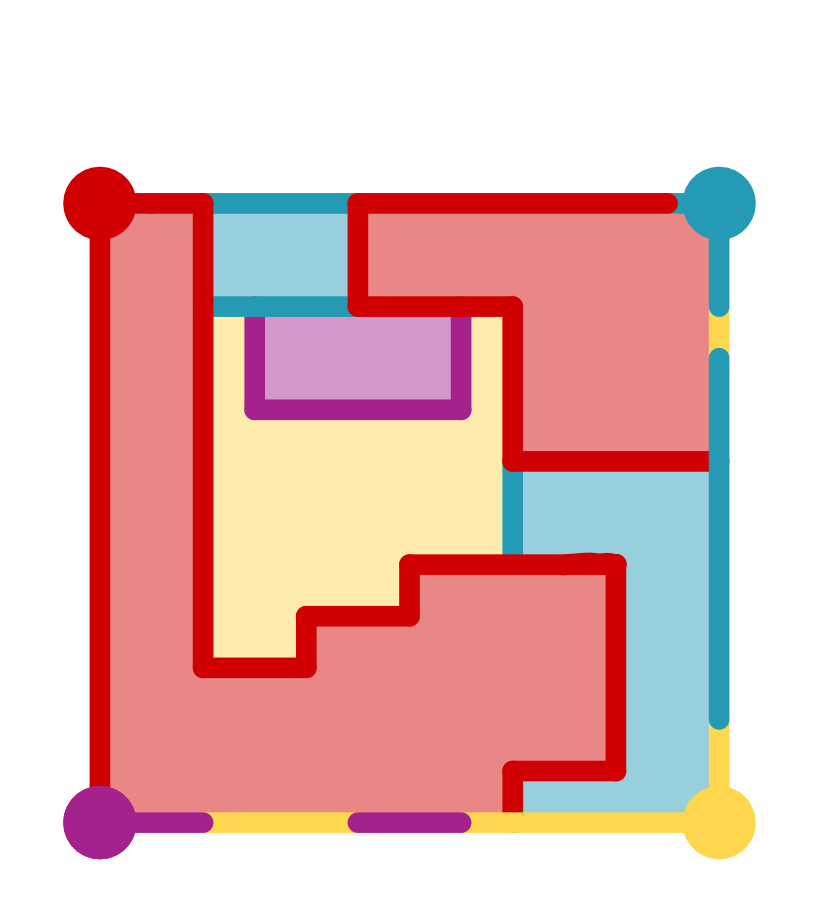}
        \subcaption{$\gamma$ emphasized}
        \label{subfig:4}
    \end{subfigure}
}
\sbox0{%
    \begin{subfigure}{.15\textwidth}
        \includegraphics[width=\textwidth]{images/kkm-lebesgue-coloring/blank_square.png}
    \end{subfigure}
}

\centering
\usebox0 \hfill \usebox1 \hfill \usebox2 \hfill \usebox0
\usebox0 \hfill \usebox3 \hfill \usebox4 \hfill \usebox0

\caption{
\subref{subfig:1} shows a set $\Lambda\subseteq[0,1]^2$ which is $\rho$-proximate for $\rho=\frac16$ (i.e. every vertex is displayed, every point of each edge is within distance $\frac16$ of some displayed point on the same edge, and every point in the interior is within distance $\frac16$ of some displayed point). 
The distance $\rho$ is shown visually in the upper left. \subref{subfig:2} shows an SLKKM coloring of $\Lambda$ (i.e. a function $\chi:\Lambda\to C=\{R_{ed},B_{lue},P_{urple},Y_{ellow}\}$ in which no color is used on opposite faces). 
\subref{subfig:3} shows how this coloring 
is used to produce a coloring of $[0,1]^2$ 
(i.e. a function $\gamma:[0,1]^d\to\set{0,1}^2$): an order is put on the set of colors $C$
(in this case $R_{ed},B_{lue},P_{urple},Y_{ellow}$ as shown at the top of \subref{subfig:3}) and each point $\vec{x}$ of the cube is mapped to the first color in the ordering for which there is a point $\vec{y}\in\Lambda$ of this color that is (1) within distance $\rho$ of $\vec{x}$, and (2) is in the smallest face containing $\vec{x}$.
For example, if $\vec{x}$ is on an edge, then the only points $\vec{y}$ considered are points in $\Lambda$ on the same edge which are distance at most $\rho$ away. \subref{subfig:4} clarifies the coloring $\gamma$ by emphasizing the colors on the vertices, the edges, and the boundaries between colors.
}

\label{fig:proof-of-sperner-variant}

\end{figure}

\restatableNeighborhoodSperner
\begin{proof}
Let $C$ be a set and $\chi:\Lambda\to C$ denote the coloring of $\Lambda$, and let $C'=\range(\chi)$.

We first deal with the case where $C'$ has infinite cardinality\footnote{ 
    If one accepts the axiom of choice, then we don't need to deal with this as a special case, but by doing so, we can avoid requiring the axiom of choice in the proof. 
}. If $C'$ has infinite cardinality, then because we can cover the cube (and thus $\Lambda$) with finitely many $\epsilon$-balls, one of these balls must contain points of infinitely many colors, so the result is true. Thus, we assume from now on that $C'$ has finite cardinality.

The next step in the proof is the following observation.
\begin{subclaim}
If $\rho\geq\frac12$ then the fact that $\Lambda$ is $\rho$-proximate implies that $\Lambda$ is $\frac12$-proximate.
\end{subclaim}
\begin{subclaimproof}
For any face $F$ of the cube, we have $F=\prod_{i=1}^d F_i$ where each $F_i$ is $\set{0}$, $\set{1}$, or $[0,1]$. This means that any $\vec{x}\in F$, we can find a vertex $\vec{v}$ of the cube (a point where each coordinate $v_i$ is $0$ or $1$) which also belongs to $F$ such that in each coordinate we have $\abs{x_i-v_i}\leq\frac12$. Since $\vec{v}$ is a vertex, by \Autoref{vertices-in-proximate-set-remark}, we have $\vec{v}\in\Lambda$. Thus, $\vec{v}\in F\cap\Lambda$ and $\norm{\vec{x}-\vec{v}}_\infty\leq\frac12$.
\end{subclaimproof}

Thus, we may continue knowing that $\Lambda$ is not only $\rho$-proximate but in fact $\rho'$-proximate where $\rho'=\min(\rho,\frac12)$. Next we comment on what happens if the term $(\epsilon-\rho')$ is non-positive. Essentially, we have to take show that the expressions $(1+\frac{(\epsilon-\rho')}{1+(\epsilon-\rho')})$ and $(1+\frac23(\epsilon-\rho'))$ are never large negative values to make sure the bound we are giving does not take on large positive values when $d$ is even. 

\begin{subclaim}
The stated result holds when $\epsilon\leq\rho'$.
\end{subclaim}
\begin{subclaimproof}
Note that when $\epsilon\leq\rho'$, then because $\rho'\in[0,\frac12]$ this implies $\epsilon\in(0,\frac12]$, so $(\epsilon-\rho')\in(-\frac12,0]$. On the interval $x\in(-\frac12,0]$, the expression $1+\frac1{1+x}$ is in $(0,1]$, so we have $1+\frac{(\epsilon-\rho')}{1+(\epsilon-\rho')}\in(0,1]$ and thus $(1+\frac{(\epsilon-\rho')}{1+(\epsilon-\rho')})^d\in(0,1]$, so $\ceil{(1+\frac{(\epsilon-\rho')}{1+(\epsilon-\rho')})^d}=1$.
Similarly, because $(\epsilon-\rho')\in(-\frac12,0]$, we have $\frac23(\epsilon-\rho')\in(-\frac13,0]$, so $1+\frac23(\epsilon-\rho')\in(\frac23,1]$, so $(1+\frac23(\epsilon-\rho'))^d\in(0,1]$, and again $\ceil{(1+\frac23(\epsilon-\rho'))^d}=1$.
Because $\Lambda$ is non-empty (by \Autoref{vertices-in-proximate-set-remark}) it is trivial to find a point where the $\epsilon$ ball contains points of at least $1$ color showing that the result is true when $\epsilon\leq\rho'$.
\end{subclaimproof}

Thus, we assume from now on that $\epsilon>\rho'$ which implies $(\epsilon-\rho')\in(0,\infty)$ (the hypothesis we need on the ball radius to apply the \nameref{kkm-lebesgue-variant}).

Next, for each $\vec{x}\in[0,1]^d$, let $F^{(\vec{x})}$ denote the smallest face containing $\vec{x}$ (i.e. the intersection of all faces containing $\vec{x}$) and let $C^{(\vec{x})}$ denote the set of colors present in the face $F^{(\vec{x})}$ and within $\rho'$ of $\vec{x}$ (formally, $C^{(\vec{x})}=\set{\chi(\vec{y}):\vec{y}\in F^{(\vec{x})}\cap\Lambda\cap\ballcinf{\rho'}{\vec{x}}}$) noting that $C^{(\vec{x})}$ is non-empty by \Autoref{proximate-rephrasing-remark}.

Let $\gamma:[0,1]^d\to C'\subseteq C$ map each $\vec{x}$ to some\footnote{
    Because $C'$ has finite cardinality, this does not require the axiom of choice.
} color in $C^{(\vec{x})}$. We claim that $\gamma$ is an SLKKM coloring so that we will be able to apply the \nameref{kkm-lebesgue-variant} to $\gamma$.

\begin{subclaim}\label{sperner-variant-subclaim-non-spanning-coloring}
The coloring $\gamma$ is does not assign the same color to points on opposite faces.
\end{subclaim}
\begin{subclaimproof}
We show that points on opposite faces are assigned different colors by $\gamma$. Let $F^{(0)}$ and $F^{(1)}$ be opposite faces of the cube---that is, there is some coordinate $j$ such that $\pi_{j}(F^{(0)})=\set{0}$ and $\pi_{j}(F^{(1)})=\set{1}$.

Let $\vec{x}^{(0)}\in F^{(0)}$ be arbitrary. Because $F^{(\vec{x}^{(0)})}$ is the intersection of all faces containing $\vec{x}$, we have $\vec{x}^{(0)}\in F^{(\vec{x}^{(0)})}\subseteq F^{(0)}$. By definition of $\gamma$, there is some $\vec{y}^{(0)}\in F^{(\vec{x}^{(0)})}\cap\Lambda\subseteq F^{(0)}\cap\Lambda$ such that $\gamma(\vec{x}^{(0)})=\chi(\vec{y}^{(0)})$. Similarly, there is some $\vec{y}^{(1)}\in F^{(1)}\cap\Lambda$ such that $\gamma(\vec{x}^{(1)})=\chi(\vec{y}^{(1)})$.

By hypothesis of the coloring $\chi$, because $\vec{y}^{(0)}$ and $\vec{y}^{(1)}$ belong to opposite faces of the cube (i.e. $F^{(0)}$ and $F^{(1)}$), we have $\chi(\vec{y}^{(0)})\not=\chi(\vec{y}^{(0)})$ showing that $\gamma(\vec{x}^{(0)})\not=\gamma(\vec{x}^{(1)})$.
\end{subclaimproof}


The following claim says that for any point $\vec{p}$, all of the colors (of $\gamma$) appearing in the $(\epsilon-\rho')$ ball at $\vec{p}$ also appear (as colors of $\chi$) in $\Lambda$ within the $\epsilon$ ball at $\vec{p}$. The connection back to $\Lambda$ below is because for any $c\in C'$, we have $\chi^{-1}(c)\subseteq\Lambda$.

\begin{subclaim}\label{sperner-variant-subclaim-containment}
The following subset containment holds for each point $\vec{p}\in[0,1]^d$ (see comment\footnote{
    In the former set we express $c\in\range(\gamma)$ rather than $c\in C'$, because it is possible that $\range(\gamma)\subsetneq C'$. The coloring $\gamma$ was defined as one of many choices, and in the natural choices we would have $\gamma(\vec{y})=\chi(\vec{y})$ for each $\vec{y}\in\Lambda$, but we did not require this, so $\gamma$ is not necessarily an extension of $\chi$, and it is possible that it does not surject onto $C'$.
}):
\[
\set{c\in\range(\gamma):\gamma^{-1}(c)\cap\balloinf{\epsilon-\rho'}{\vec{p}}\not=\emptyset}
\quad\subseteq\quad
\set{c\in\range(\chi)=C':\chi^{-1}(c)\cap\balloinf{\epsilon}{\vec{p}}\not=\emptyset}
\]
\end{subclaim}
\begin{subclaimproof}
If $c$ belongs to the left set, then $\gamma^{-1}(c)\cap\balloinf{\epsilon-\rho'}{\vec{p}}\not=\emptyset$, so let $\vec{x}\in\gamma^{-1}(c)\cap\balloinf{\epsilon-\rho'}{\vec{p}}$. Then $\gamma(\vec{x})=c$ and using the defining property of $\gamma$ that $\gamma(\vec{x})\in C^{(\vec{x})}$, we have the following:
\[
c = \gamma(\vec{x}) \in C^{(\vec{x})} = \set{\chi(\vec{y}):\vec{y}\in F^{(\vec{x})}\cap\Lambda\cap\ballcinf{\rho'}{\vec{x}}}.
\]
This means that there is some $\vec{y}\in\Lambda\cap\ballcinf{\rho'}{\vec{x}}$ such that $\chi(\vec{y})=c$ (i.e. $\vec{y}\in\chi^{-1}(c)$). Also, for this $\vec{y}$, because $\norm{\vec{y}-\vec{x}}_\infty\leq\rho'$ and $\norm{\vec{x}-\vec{p}}_\infty<\epsilon-\rho'$ we have by the triangle inequality that $\vec{y}\in\balloinf{\epsilon}{\vec{p}}$. Thus, $\vec{y}$ demonstrates that $\chi^{-1}(c)\cap\balloinf{\epsilon}{\vec{p}}$ is non-empty which shows that $c$ belongs to the right set.
\end{subclaimproof}

Now we can finish off the proof. By the \nameref{kkm-lebesgue-variant} (using the fact that $(\epsilon-\rho')\in(0,\infty)$ along with \Autoref{sperner-variant-subclaim-non-spanning-coloring}), there exists $\vec{p}\in[0,1]^d$ such that $\balloinf{\epsilon-\rho'}{\vec{p}}$ intersects at least 
$\ceil{(1+\frac{(\epsilon-\rho')}{1+(\epsilon-\rho')})^d}$ colors (formally, $\set{c\in\range(\gamma): \gamma^{-1}(c)\cap \balloinf{\epsilon-\rho'}{\vec{p}}\not=\emptyset}$ has cardinality at least $\ceil{(1+\frac{(\epsilon-\rho')}{1+(\epsilon-\rho')})^d}$). Thus, by \Autoref{sperner-variant-subclaim-containment} the set $\set{c\in\range(\chi)=C': \chi^{-1}(c)\cap \balloinf{\epsilon}{\vec{p}}\not=\emptyset}$ also has cardinality at least $\ceil{(1+\frac{(\epsilon-\rho')}{1+(\epsilon-\rho')})^d}$ which is what we set out to prove. (Informally, this latter set is the colors $c$ for which there is a point in $\Lambda\cap\balloinf{\epsilon}{\vec{p}}$ that is mapped to $c$ by the original coloring $\chi$.)

Finally, if $\epsilon\in(0,\frac12]$, then because we have at this point that $\epsilon>\rho'$, we in fact have $\epsilon-\rho'\in(0,\frac12]$. Thus, by the same inequalities used in the proof of the \nameref{kkm-lebesgue-variant}, we have
$\ceil{\left(1+\frac{(\epsilon-\rho')}{1+(\epsilon-\rho')}\right)^d}\geq \ceil{(1+\tfrac23(\epsilon-\rho'))^d}$ which completes the proof.
\end{proof}
\section{Discussion}
\label{sec:discussion}

In this section, we will give some discussion of the bound that we achieved in the \nameref{kkm-lebesgue-variant} (and equivalently in the \nameref{sperner-variant}) including some limitations on improving that bound and some desires for future improvements of our result. We begin by defining the best possible function we could use to replace the expression ``$\ceil{(1+\frac\epsilon{1+\epsilon})^d}$'' in the statement of the \nameref{kkm-lebesgue-variant}. Let $K^\circ:\N\times(0,\infty)\to\N$ be defined by
\[
K^\circ(d,\epsilon)\defeq \max\Bigg\lbrace\kappa\in\N:\;\; \parbox{2.5in}{for every SLKKM coloring of $[0,1]^d$, there exists a point $\vec{p}\in[0,1]^d$ such that
$\balloinf{\epsilon}{\vec{p}}$ intersects at least $\kappa$ colors}\Bigg\rbrace
\]
and define the function $\clos{K}$ similarly but using closed balls instead of open balls.

A few comments are in order. First, this maximum is well defined because there exist SLKKM colorings of $[0,1]^d$ using $2^d$ colors (e.g. color each of the $2^d$ orthants of the cube a unique color dealing with boundaries between colors arbitrarily) and so the set that the maximum is being taken over is necessarily a subset of $[2^d]$ and thus has a maximum. Second, it is trivially the case for all $d\in\N$ and $\epsilon\in(0,\infty)$ that $\clos{K}(d,\epsilon)\geq K^\circ(d,\epsilon)$ because the closed $\epsilon$-ball is a superset of the open $\epsilon$-ball. Third, by the same superset reasoning we also know that for each fixed $d\in\N$, both functions $K^\circ$ and $\clos{K}$ are non-decreasing in $\epsilon$ (we will refer to both of these properties as monotonicity throughout the discussion). We can say much more about $K^\circ$ and $\clos{K}$, and the information from the discussion that follows is summarized in \Autoref{tab:K-bounds}.

\paragraph{Immediate lower bound:} The first obvious thing we can say is that for all $d\in\N$ and $\epsilon\in(0,\infty)$, we know that $K^\circ(d,\epsilon)\geq d+1$ (ditto for $\clos{K}$) which follows straightforwardly from the \nameref{kkm-lebesgue} taking care with the infinite cardinality\footnote{
    One can ``collapse'' the possibly infinitely many colors into just $2^d$ colors to obtain a new finite SLKKM coloring, and by the \nameref{kkm-lebesgue-variant}, there is a point at the closure of at least $d+1$ of the collapsed colors, and any open set around this point intersects at least $d+1$ of the original color sets. See \Autoref{kkm-implies-kkm-lebesgue} for example, where this result shows up as part of the larger proof.
}. This is a better bound for small $\epsilon$ (relative to $d$) than our bound in the \nameref{kkm-lebesgue-variant} because $\lim_{\epsilon\to0}\ceil{(1+\frac\epsilon{1+\epsilon})^d}=1<d+1$, so for sufficiently small $\epsilon$ it holds that $\ceil{(1+\frac\epsilon{1+\epsilon})^d}\leq d+1$.

\paragraph{Trivial tight bound for large $\boldsymbol{\epsilon}$:} The second obvious thing we can say is that for any $d\in\N$ and $\epsilon>\frac12$, we have $K^\circ(d,\epsilon)=2^d$. This is because the open $\ell_\infty$ $\epsilon$-ball placed at the center of the unit $d$-cube is a superset of the cube itself, so it intersects all $2^d$ corners---no two of which have the same color in an SLKKM coloring---so $K^\circ(d,\epsilon)\geq2^d$. And (by definition) $K^\circ(d,\epsilon)\leq2^d$ because there are SLKKM colorings with only $2^d$ colors. Similarly, for any $d\in\N$ and $\epsilon\geq\frac12$ (note the non-strict inequality this time), we have $\clos{K}(d,\epsilon)=2^d$.

\paragraph{Perspective of this paper:} The third thing that we can say is that for all $d\in\N$ and $\epsilon\in(0,\infty)$ we have $\clos{K}(d,\epsilon)\geq K^\circ(d,\epsilon)\geq \ceil{(1+\tfrac{\epsilon}{1+\epsilon})^d}$ by the \nameref{kkm-lebesgue-variant}. In other words, the purpose of this paper is to put some lower bound on $K^\circ$ (and consequently on $\clos{K}$).

\paragraph{Non-trivial tight bound for small $\boldsymbol{\epsilon}$:} The fourth thing we can say is something that we don't believe is at all obvious; it turns out that we know the value of $K^\circ$ (and $\clos{K}$) exactly for all dimensions for a specific small regime of $\epsilon$ near $0$. Specifically, for any $d\in\N$ and $\epsilon\in(0,\frac1{2d}]$, we know that $d+1\geq \clos{K}(d,\epsilon) \geq K^\circ(d,\epsilon)$; along with the ``first obvious thing'' we said, this gives equality with $d+1$. The reason for this is the following. We demonstrated in \cite[Theorem~7.20]{geometry_of_rounding} that in each dimension $d$, there is a partition $\P_{d}$ of $\R^d$ consisting of translates of the half-open cube $[0,1)^d$ with the property that for every point $\vec{p}\in\R^d$, the closed ball $\ballcinf{\frac1{2d}}{\vec{p}}$ intersects at most $d+1$ cubes in the partition (we called such partitions $(d+1,\frac{1}{2d})$-secluded partitions). This immediately gives an SLKKM coloring of $[0,1]^d$ with the same property: define the coloring $\chi:[0,1]^d\to\P_d$ by mapping each point in $[0,1]^d$ to the unique member of $\P_d$ which it belongs to. This is an SLKKM coloring because no member of $\P_d$ contains points $\ell_\infty$ distance $1$ apart, and so no points distance $1$ apart are given the same color (i.e. no points on opposite faces are given the same color). In fact, this coloring uses exactly $2^d$ colors\footnote{
    The range of $\chi$ can be shown to have cardinality exactly $2^d$.
    This is because for any $X\in\P_d$ which intersects $[0,1]^d$, it follows by simple analysis that because $X$ is a translate of $[0,1)^d$, it must be that $X$ contains one of the corners of $[0,1]^d$. And since no member of $\P_d$ contains two corners of $[0,1]^d$ (because any two corners of $[0,1]^d$ are $\ell_\infty$ distance $1$ apart, and no two points in a translate of $[0,1)^d$ are distance $1$ apart), the subset of members of $\P_d$ which intersect $[0,1]^d$ (i.e. the range of $\chi$) are in bijection with the $2^d$ corners of $[0,1]^d$.
} and consists of color sets which are rectangles\footnote{
    For each color, the set of points in $[0,1]^d$ of that color is $[0,1]^d$ intersected with some translate of $[0,1)^d$, and this intersection of a product of intervals is itself a product of intervals (i.e. a $d$-dimensional rectangle).
}. This demonstrates the existence of (finite) SLKKM colorings for which no closed $\ell_\infty$ $\epsilon$-ball intersects more than the $d+1$ colors even when taking $\epsilon$ as large as $\epsilon=\frac1{2d}$. Once more for clarity: for any $d\in\N$ and $\epsilon\in(0,\frac1{2d}]$ we have that $\clos{K}(d,\epsilon) = K^\circ(d,\epsilon) = d+1$.

\paragraph{Non-trivial upper bound:} We can generalize the result above by utilizing our newer unit cube partitions \cite[Theorem~1.9]{FOCS23_submission} rather than our initial constructions in \cite{geometry_of_rounding}. Specifically, for each $d,n\in\N$ there is a partition $\P_{d,n}$ of $\R^d$ by translates of $[0,1)^d$ which is $\left(\frac1{2n}, (n+1)^{\ceil{\frac dn}}\right)$-secluded (i.e. for every $\vec{p}\in\R^d$, the closed ball $\ballcinf{\frac1{2n}}{\vec{p}}$ intersects at most $(n+1)^{\ceil{\frac dn}}$-many members of the partition)\footnote{
    In \cite[Theorem~1.9]{FOCS23_submission}, replace ``$f(d)$'' with ``$n$.''
}. As in the prior paragraph, this immediately gives rise to an SLKKM coloring of $[0,1]^d$ such that for every $\vec{p}\in[0,1]^d$, we have that $\ballcinf{\frac1{2n}}{\vec{p}}$ contains points of at most $(n+1)^{\ceil{\frac dn}}$ colors. Thus, (along with monotonicity) we have for all $d,n\in\N$ and $\epsilon\in(0,\frac1{2n}]$ that $K^\circ(d,\epsilon)\leq\clos{K}(d,\epsilon)\leq (n+1)^{\ceil{\frac dn}}$. Taking $n=1$, we recover the trivial upper bound of $2^d$ when $\epsilon\in(0,\frac12]$ (which is tight for $\clos{K}$ at $\frac12$), and taking $n=d$ we recover the upper bound of $d+1$ when $\epsilon\in(0,\frac1{2d}]$ (which we have already said is tight on this whole interval). 

Since we can freely choose $n$, this gives some non-trivial upper bound on $\clos{K}$ and $K^\circ$ for every choice of $d$ and $\epsilon$. That is to say that for given $\epsilon\in(0,\frac12]$ and $d\in\N$, we take any $n\in\N\cap[1,\frac1{2\epsilon}]$ (in particular we can take the $n$ in this range which minimizes $(n+1)^{\ceil{\frac dn}}$) so that $\epsilon\in(0,\frac1{2n}]$ and thus by the above $K^\circ(d,\epsilon)\leq\clos{K}(d,\epsilon)\leq (n+1)^{\ceil{\frac dn}}$. Based on the fact that for any $d\in\N$ this bound is tight at the extremes $\epsilon\in(0,\frac1{2d}]$ and $\epsilon=\frac12$, we wonder if this bound is nearly tight everywhere else.

We quickly note that we really only need to consider $n\in[d]$ and not $n\in\N$ because for $n>d$, we have $(n+1)^{\ceil{\frac dn}}>d+1$ so this is a worse upper bound than $d+1$, but for $\epsilon\in(0,\frac1{2n}]$ (the domain of $\epsilon$ for which this bound applies) we already know by the ``Non-trivial tight bound for small $\epsilon$'' paragraph that that $\epsilon\in(0,\frac1{2n}\subseteq(0,\frac1{2d}]$ so $K^\circ(d,\epsilon)\leq\clos{K}(d,\epsilon)\leq d+1$. Thus, we get no new information from this bound when $n>d$. We also emphasize that it is important for $n$ to be an integer in the above bounds as is necessary in proving \cite[Theorem~1.9]{FOCS23_submission}.

\paragraph{Difference between $\boldsymbol{K^\circ}$ and $\boldsymbol{\clos{K}}$:} The final thing we will say about these functions is that $\clos{K}$ and $K^\circ$ are not in general equal,
and in fact they differ noticeably when $\epsilon=\frac12$. We have already stated that $\clos{K}(d,\frac12)=2^d$ (in the ``Trivial tight bound for large $\epsilon$'' paragraph), and we will now argue that $K^\circ(d,\frac12)\leq 2^{d-1}+1$. Consider first $d=2$ along with \Autoref{fig:hamming-colorings}; it is obviously impossible for any positioning of the {\em open} $\ell_\infty$ $\frac12$-ball to intersect all $2^d=4$ colors because it can't include both of the colors which are single points as those points are $\ell_\infty$ distance $1$ apart. This idea extends into higher dimensions by using an SLKKM coloring with $2^d$ colors which are identified with the vertices of the cube. Each of the $2^{d-1}$ colors in $\set{0,1}^d$ with even hamming weight are used only on the corresponding vertex (e.g. in \Autoref{fig:hamming-colorings} these are $\langle0,0\rangle$ (purple) and $\langle1,1\rangle$ (blue)). Every other point on the cube is given a color with odd hamming weight; this is possible because aside from vertices (which are given their own color), every other point of the cube belongs to a face which is a superset of an edge of the cube, and every edge contains both an even and odd hamming weight vertex (by the standard definition of the $d$-dimensional hypercube graph) so there is at least $1$ odd hamming weight color which can be used. This results in a KKM-style cover (but not with closed sets) which is in fact an SLKKM coloring because each point can be given a color of one of the vertices on the smallest face to which it belongs, and thus points on opposite faces are not given the same color because opposite faces are necessarily disjoint.
Such colorings have $2^{d-1}$ colors which are single points, so even the open $\ell_\infty$ $\frac12$-ball cannot hit more than one of them because all of the vertices are distance $1$ apart. Thus, the $\epsilon=\frac12$-ball can hit at most $2^{d-1}+1$ colors (possibly all of the odd hamming weight colors and one even hamming weight color), so $K^\circ(d,\frac12)\leq 2^{d-1}+1$. And similarly, for $\epsilon<\frac12$ we have $\clos{K}(d,\epsilon)\leq K^\circ(d,\frac12)\leq 2^{d-1}+1$.

In light of this, we believe it is interesting to ask for each dimension $d$ what the ranges of the functions $\clos{K}$ and $K^\circ$ are (as functions of $\epsilon$).
\begin{question}
For each $d\in\N$, what is $\range(\clos{K}(d,\cdot))$ and $\range(K^\circ(d,\cdot))$? In particular, what are the cardinalities of these ranges?
\end{question}
We know that the range is trivially a subset of the integers between $d+1$ and $2^d$ as already discussed, but now we see that it is in fact a proper subset because $K^\circ(d,\cdot)$ and $\clos{K}(d,\cdot)$ are monotonic, so the hamming coloring discussion above shows that neither range includes any values strictly between $2^{d-1}+1$ and $2^d$. We wonder if the functions $K^\circ(d,\cdot)$ and $\clos{K}(d,\cdot)$ are constant along these $d$-many open intervals:
\[
    \left(0,\frac1{2d}\right), \quad 
    \left(\frac1{2d},\frac1{2(d-1)}\right), \quad
    \left(\frac1{2(d-1)},\frac1{2(d-2)}\right), \quad
    \ldots, \quad
    \left(\frac1{8},\frac1{6}\right), \quad
    \left(\frac1{6},\frac1{4}\right), \quad
    \left(\frac1{4},\frac1{2}\right).
\]
If so, this might align nicely with the upper bounds that we obtained in the ``Non-trivial upper bound'' paragraph using \cite[Theorem~1.9]{FOCS23_submission} which gave an separate upper bound on each such interval.

We do at least know that $K^\circ(d,\cdot)$ is broken into non-trivial intervals (i.e. intervals which are not singleton sets) because it is left continuous (justified as follows). For any SLKKM coloring of $[0,1]^d$ and $\epsilon\in(0,\infty)$, there is a point $\vec{p}\in[0,1]^d$ such that $\balloinf{\epsilon}{\vec{p}}$ contains points of at least $K^\circ(d,\epsilon)$ different colors by definition. Pick $K^\circ(d,\epsilon)$-many points---each of a distinct color that is included (which is a finite number of points because $K^\circ(d,\epsilon)\leq 2^d$). Because each of these finite number of points is contained in the {\em open} ball $\balloinf{\epsilon}{\vec{p}}$, then for each $\epsilon'<\epsilon$ sufficiently large, we have that $\balloinf{\epsilon'}{\vec{p}}$ contains all of these points, and so includes at least $K^\circ(d,\epsilon)\leq 2^d$ colors. Thus, $\lim_{\epsilon'\uparrow\epsilon}K^\circ(d,\epsilon')\geq K^\circ(d,\epsilon)$ and we get the other inequality by the monotonicity of $K^\circ$ showing that $K^\circ$ is right continuous:
\[
    \lim_{\epsilon'\uparrow\epsilon}K^\circ(d,\epsilon') = K^\circ(d,\epsilon).
\]
Either by using a similar argument, or by noting that for all $\epsilon'<\epsilon$ we have $K^\circ(d,\epsilon') \leq \clos{K}(d,\epsilon') \leq K^\circ(d,\epsilon)$ we have by a squeeze theorem that also
\[
    \lim_{\epsilon'\uparrow\epsilon}\clos{K}(d,\epsilon') = K^\circ(d,\epsilon).
\]
Furthermore, on the interior of any interval $(\epsilon_a,\epsilon_b]$ for which $K^\circ(d,\cdot)$ is constant, we also know that $\clos{K}(d,\cdot)$ is constant because for $\epsilon\in(\epsilon_a,\epsilon_b)$ we have $\balloinf{\epsilon_a}{\vec{0}}\subseteq \ballcinf{\epsilon}{\vec{0}} \subseteq \balloinf{\epsilon_b}{\vec{0}}$ so $K^\circ(d,\epsilon_a)\leq \clos{K}(d,\epsilon)\leq K^\circ(d,\epsilon_b)$.

\begin{table}[h]
    \centering
    \[\def\arraystretch{1.4}
    \begin{array}{|l|l|l|l|}\hline
    \epsilon              & K^\circ(d,\epsilon)                       & \clos{K}(d,\epsilon)                      & \text{Reason} \\\hline
    \in(0,\infty)         & \leq 2^d                                  & \leq 2^d                                  & \text{Trivial}\\
    \in(\frac12,\infty)   & = 2^d                                     & = 2^d                                     & \text{Trivial} \\\hline
    =\frac12              & \leq 2^{d-1}+1                            & = 2^d                                     & \text{Hamming coloring discussion (c.f. \Autoref{fig:hamming-colorings})} \\\hline
    \in(0,\infty)         & \geq d+1                                  & \geq d+1                                  & \text{\nameref{kkm-lebesgue}} \\
    \in(0,\infty)         & \geq (1+\tfrac{\epsilon}{1+\epsilon})^d   & \geq (1+\tfrac{\epsilon}{1+\epsilon})^d   & \text{\nameref{kkm-lebesgue-variant}} \\
    \in(0,\frac12]        & \geq (1+\tfrac23\epsilon)^d               & \geq (1+\tfrac23\epsilon)^d               & \text{\nameref{kkm-lebesgue-variant}} \\\hline
    \in(0,\frac1{2d}]     & = d+1                                     & = d+1                                     & \text{\cite[Theorem~7.20]{geometry_of_rounding} \& \nameref{kkm-lebesgue}} \\
    \in(0,\frac1{2n}], n\in\N  & \leq (n+1)^{\ceil{\frac dn}}         & \leq (n+1)^{\ceil{\frac dn}}                 & \text{\cite[Theorem~1.9]{FOCS23_submission}} \\\hline
    \end{array}
    \]

    \caption{Known information about the ideal functions $K^\circ$ and $\clos{K}$.}
    \label{tab:K-bounds}
\end{table}


\begin{figure}
\centering
    \includegraphics[width=0.27\textwidth]{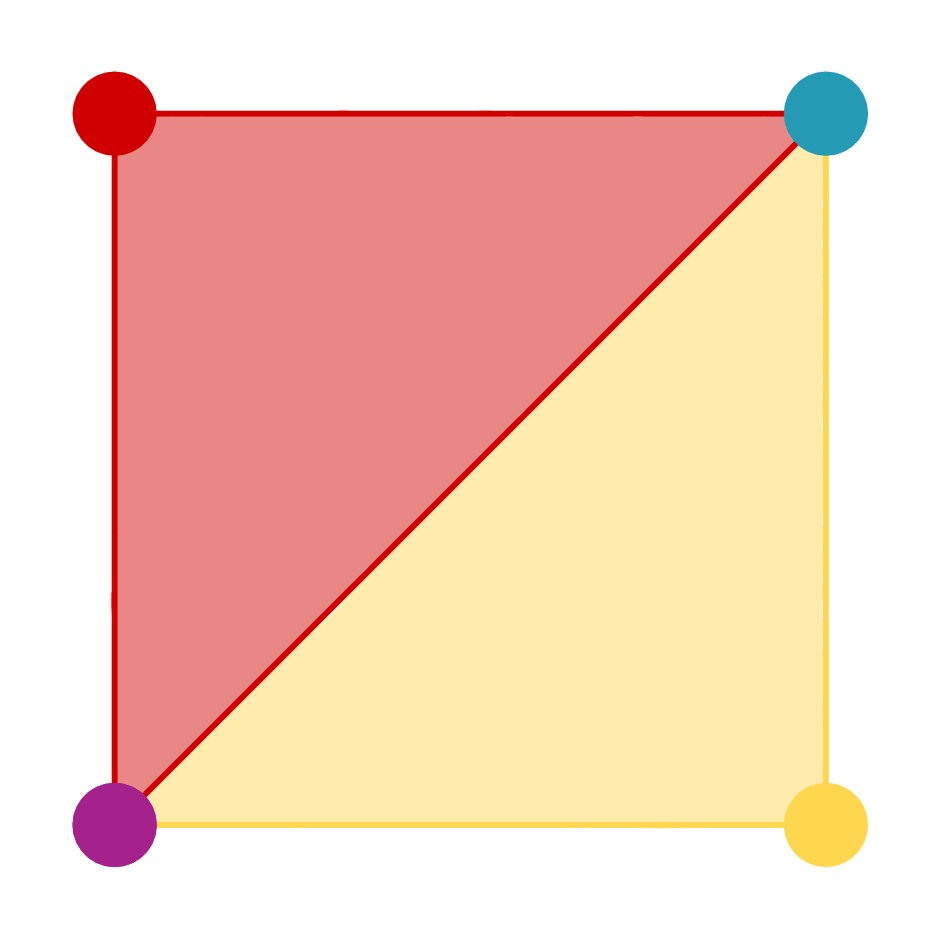}

\caption{
An example of a coloring in which each even hamming weight vertex has a color only used at that vertex. It is impossible for an $\epsilon=\frac12$ open $\ell_\infty$ ball to contain more than one of the even hamming weight colors because all vertices are distance $1$ apart.
}

\label{fig:hamming-colorings}

\end{figure}

\paragraph{Bounding the constant in the \nameref*{kkm-lebesgue-variant}:} A consequence of the ``Difference between $K^\circ$ and $\clos{K}$'' paragraph is that any improvement to the \nameref{kkm-lebesgue-variant} using a bound of the form $(1+c\epsilon)^d$ for some constant $c$ must have the property that $(1+c\frac12)^d\leq 2^{d-1}+1$ for all $d$. Solving for $c$ gives $c\leq2\left(\left(2^{d-1}+1\right)^{1/d}-1\right)$. Graphing shows that $d=3$ is the integer where this takes the smallest value and shows that $c\leq1.42$. This is not an asymptotic claim; we are only saying that if we want to replace the constant $\frac23$ in the \nameref{kkm-lebesgue-variant} with some other constant $c$, it must be that $c\leq1.42$. In particular, we cannot hope to obtain the bound of $(1+2\epsilon)^d$ (a bound which we were able to achieve in \cite{FOCS23_submission} for colorings (i.e. partitions) of $\R^d$ in which all color sets (i.e. partition members) had at most unit outer measure or at most unit diameter).

\paragraph{Conclusion:} All of this demonstrates that the lower bound of $(1+\frac{\epsilon}{1+\epsilon})^d$ or $(1+\frac23\epsilon)^d$ in our \nameref{kkm-lebesgue-variant} is not in general tight. For $\epsilon$ tending to $0$, this bound predicts $1$ even though the exact bound is $d+1$ for all $\epsilon\in(0,\frac1{2d}]$. Furthermore, our bound in the \nameref{kkm-lebesgue-variant} predicts only slightly more than $(1+\frac13)^d$ for $\epsilon$ slightly bigger than $\frac12$ when the tight bound is $2^d$. For these reasons, we suspect that the lower bound of the \nameref{kkm-lebesgue-variant} is not tight for any value of $\epsilon$, and we hope that future work is able to establish better lower bounds on $K^\circ(d,\epsilon)$ and $\clos{K}(d,\epsilon)$. In addition, we hope that future work can either improve our upper bounds on these functions or prove that they are nearly tight. 



We believe it would be nice if all of this could be done with a single technique. For example, as summarized in \Autoref{tab:K-bounds}, the current lower bounds on $K^\circ$ and $\clos{K}$ for small $\epsilon$ follow from the \nameref{true-kkm-lemma} or the \nameref{true-lebesgue-covering-theorem} (summarized in the \nameref{kkm-lebesgue}) while the current lower bounds on $K^\circ$ and $\clos{K}$ for larger $\epsilon$ given in this paper use very different techniques than those traditionally used to prove the \nameref{true-kkm-lemma} and the \nameref{true-lebesgue-covering-theorem}. So there are two very different proof techniques to prove two regimes of the current bounds. It would be  particularly satisfying if there was a single technique giving some ``nice'' lower bound expression $k^\circ(d,\epsilon)$ for $K^\circ(d,\epsilon)$ such that
\[
k^\circ(d,\epsilon) \geq \max
\begin{cases}
    \ceil{(1+\frac\epsilon{1+\epsilon})^d}\\[1em]
    d+1
\end{cases}
\]
so that it supersedes both of the two bounds we have used.
This would be nice because the \nameref{kkm-lebesgue} would then follow from this result\footnote{
    This would mean that for every $\epsilon$, there is a point $\vec{p}$ where $\balloinf{\epsilon}{\vec{p}}$ intersects at least $k^\circ(d,\epsilon)\geq d+1$ colors. 
    Consider a sequence $\langle\epsilon^{(n)}\rangle_{n=1}^\infty$ such that $\lim_{n\to\infty}\epsilon^{(n)}=0$, and for each $n$, let $\vec{p}^{(n)}\in[0,1]^d$ 
    be a point such that $\balloinf{\epsilon^{(n)}}{\vec{p^{(n)}}}$ intersects points of at least $d+1$ colors. 
    By compactness, we may assume that $\vec{p}^{(n)}$ converges (otherwise we just pick a convergent subsequence). Let $\vec{p}$ be the point of convergence. Then for any $\delta\in(0,\infty)$, it holds that $\balloinf{\delta}{\vec{p}}$ intersects at least $d+1$ colors because there is some $N\in\N$ such that for all $n\geq N$ we have $\norm{\vec{p}-\vec{p}^{(n)}}_\infty<\frac\delta2$ and $\epsilon^{(n)}<\frac\delta2$ so $\balloinf{\delta}{\vec{p}}\supseteq\balloinf{\epsilon^{(n)}}{\vec{p}^{(n)}}$ which intersects at least $d+1$ colors.
    It is left to show that $\vec{p}$ is in fact at the closure of some set of at least $d+1$ colors, and finiteness is critical for this. Consider a sequence $\langle\delta^{(n)}\rangle_{n=1}^\infty$ converging to $0$ and the sequence $\langle C^{(n)}\rangle_{n=1}^\infty$ where $C_n$ is the set of colors intersected by $\balloinf{\delta^{(n)}}{\vec{p}}$. Because only finitely many colors are used by hypothesis of the \nameref{kkm-lebesgue}, there are at most a finite number of distinct terms in $\langle C^{(n)}\rangle_{n=1}^\infty$ as each term is a subset of the colors. Thus, there is some set of colors $C$ which appears infinitely many times in the sequence, and since each $C^{(n)}$ has cardinality at least $d+1$, so does $C$. That is, we have found a sequence of arbitrarily small open balls at $\vec{p}$ which each intersect all $d+1$ colors in $C$, and so $\vec{p}$ is in the closure of all colors in $C$.
}; equivalently (as shown in \Autoref{subsec:kkm-lebesgue-coloring-equivalence}) the \nameref{true-lebesgue-covering-theorem} and the \nameref{true-kkm-lemma} would follow from this result and so both the \nameref{true-lebesgue-covering-theorem} and the \nameref{true-kkm-lemma} could be viewed as special cases of a more general result.

\section{Acknowledgements}
Pavan's work is partially supported by NSF award 2130536.  Vander Woude and Vinodchandran's work is partially supported by NSF award 2130608. Radcliffe's work is partially supported by Simons grant 429383.
Part of the work was done when Pavan and Vinodchandran were visiting the Simons Institute for the Theory of Computing. Part of the work was done while Dixon was a postdoc at Ben-Gurion University of the Negev. 

\bibliography{references}
\bibliographystyle{plainurl}

\newpage
\appendix

\section{Skipped Proofs}
\label{sec:appendix}

\subsection{Equivalence of the KKM Lemma, Lebesgue Covering Theorem, and Color Variant}\label{subsec:kkm-lebesgue-coloring-equivalence}

\begin{lemma}[KKM/Lebesgue $\Longrightarrow$ KKM]
The \namefullref{kkm-lebesgue} implies the \namefullref{true-kkm-lemma}.
\end{lemma}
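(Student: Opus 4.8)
The plan is to reuse the coloring construction already introduced in the proof of \Autoref{corollary:neighborhood-kkm}, passing through the \nameref{kkm-lebesgue} as the intermediate statement. Given a KKM cover $\C=\set{C_{\vec{v}}}_{\vec{v}\in\set{0,1}^d}$ of $[0,1]^d$, for each $\vec{x}\in[0,1]^d$ let $F_{\vec{x}}$ be the smallest face of the cube containing $\vec{x}$ (the intersection of all faces containing $\vec{x}$, which is again a face). The defining property of a KKM cover guarantees $\vec{x}\in C_{\vec{v}}$ for at least one $\vec{v}\in F_{\vec{x}}\cap\set{0,1}^d$, so we may define
\[
\chi(\vec{x})=\min_{\mathrm{lex}}\set{\vec{v}\in F_{\vec{x}}\cap\set{0,1}^d:\vec{x}\in C_{\vec{v}}},
\]
exactly as before. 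The argument given in the proof of \Autoref{corollary:neighborhood-kkm} shows verbatim that $\chi$ is a finite SLKKM coloring of $[0,1]^d$: finiteness is immediate since the codomain $\set{0,1}^d$ is finite, and the coloring never assigns the same vertex-label to points on opposite faces because opposite faces are disjoint and $\chi(\vec{x})\in F_{\vec{x}}$. I would cite that verification rather than reproduce it.

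Next I would apply the \nameref{kkm-lebesgue} to $\chi$: there exists a point $\vec{p}\in[0,1]^d$ belonging to the closure of at least $d+1$ of the color sets, i.e. $\abs*{\set{\vec{v}\in\set{0,1}^d:\vec{p}\in\clos{\chi^{-1}(\vec{v})}}}\geq d+1$. The final step is to transfer this back to the cover. For each vertex $\vec{v}$ we have $\chi^{-1}(\vec{v})\subseteq C_{\vec{v}}$ directly from the definition of $\chi$, and since $C_{\vec{v}}$ is closed (a requirement in the definition of a KKM cover), $\clos{\chi^{-1}(\vec{v})}\subseteq\clos{C_{\vec{v}}}=C_{\vec{v}}$. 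Hence every vertex $\vec{v}$ with $\vec{p}\in\clos{\chi^{-1}(\vec{v})}$ also satisfies $\vec{p}\in C_{\vec{v}}$, so $\vec{p}$ lies in at least $d+1$ sets of the cover, which is the conclusion of the \nameref{true-kkm-lemma}.

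The proof is essentially bookkeeping, so I expect no real obstacle. The only place anything could go wrong is the transfer step, and there the crucial (and only nontrivial) ingredient is the closedness of the sets $C_{\vec{v}}$ — which is exactly why the \nameref{kkm-lebesgue}, phrased in terms of closures of color sets, is the right intermediate statement to route through. The things to be careful about are: stating the closure/closedness implication explicitly, and making clear that the finite SLKKM property of $\chi$ is the same verification already carried out for \Autoref{corollary:neighborhood-kkm}.
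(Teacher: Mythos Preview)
Your proposal is correct and follows essentially the same approach as the paper's proof: define $\chi(\vec{x})=\min_{\mathrm{lex}}\{\vec{v}\in F_{\vec{x}}\cap\{0,1\}^d:\vec{x}\in C_{\vec{v}}\}$, verify it is a finite SLKKM coloring via the disjointness of opposite faces, apply the \nameref{kkm-lebesgue} to obtain $\vec{p}$ in the closure of at least $d+1$ color classes, and then use $\chi^{-1}(\vec{v})\subseteq C_{\vec{v}}$ together with the closedness of $C_{\vec{v}}$ to conclude. The paper carries out exactly this argument (indeed, it is the same construction already used in the proof of \Autoref{corollary:neighborhood-kkm}, which you correctly identify).
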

\begin{proof}
Let $\C=\set{C_{\vec{v}}}_{\vec{v}\in\set{0,1}^d}$ be a KKM cover of $[0,1]^d$. For each $\vec{x}\in[0,1]^d$, let $F_{\vec{x}}$ denote the smallest face of the cube containing $\vec{x}$ (i.e. $F_{\vec{x}}$ is the intersection of all faces containing $\vec{x}$). By the defining property of a KKM cover, we have $F_{\vec{x}}\subseteq\bigcup_{\vec{v}\in F_{\vec{x}}\cap\set{0,1}^d}C_{\vec{v}}$, so in particular there exists some $\vec{v}\in F_{\vec{x}}\cap\set{0,1}^d$ with $\vec{x}\in C_{\vec{v}}$. Define the function $\chi$ as follows where $\min_{\mathrm{lex}}$ denotes the minimum element in a subset of $\set{0,1}^d$ under the lexicographic ordering:
\begin{align*}
    \chi &:[0,1]^d \to \set{0,1}^d\\
    \chi(\vec{x}) &= \min_{\mathrm{lex}}\set{\vec{v}\in\set{0,1}^d\cap F_{\vec{x}}: \vec{x}\in C_{\vec{v}}}
\end{align*}
We have already demonstrated that the the set in the definition is not empty, so $\chi$ is well-defined.

We claim that $\chi$ is a finite SLKKM coloring of $[0,1]^d$. The finiteness is trivial because the codomain of $\chi$ is finite, so we need only show it is an SLKKM coloring. Suppose $F^{(0)}$ and $F^{(1)}$ are opposite faces of the cube (i.e. there is some coordinate $j\in[d]$ such that $\pi_j(F^{(0)})=\set{0}$ and $\pi_j(F^{(1)})=\set{1}$) and let $\vec{x}^{(0)}\in F^{(0)}$ and $\vec{x}^{(1)}\in F^{(1)}$. Because $\pi_j(F^{(0)})\cap \pi_j(F^{(1)})=\emptyset$, it follows that $F^{(0)}\cap F^{(1)}=\emptyset$, so $F^{(0)}$ and $F^{(1)}$ are disjoint sets.

Because $\vec{x}^{(0)}\in F^{(0)}$ and $F_{\vec{x}^{(0)}}$ is by definition the intersection of all faces containing $\vec{x}$, we have $F_{\vec{x}^{(0)}}\subseteq F^{(0)}$ (and similarly replacing ``$0$'' with ``$1$'') so that also $F^{(0)}$ and $F^{(1)}$ are disjoint. By definition of $\chi$ we have $\chi(\vec{x}^{(0)})\in F^{(0)}$ and $\chi(\vec{x}^{(1)})\in F^{(1)}$ showing that $\chi(\vec{x}^{(0)})\not=\chi(\vec{x}^{(1)})$, so $\chi$ is an SLKKM coloring.

By the \nameref{kkm-lebesgue}, there exists $\vec{p}\in[0,1]^d$ such that $\abs{\set{\vec{v}\in\set{0,1}^d:\vec{p}\in\clos{\chi^{-1}(\vec{v})}}}\geq d+1$. Fix such a $\vec{p}$ for the remainder of the proof. For each $\vec{v}\in\set{0,1}^d$, observe that $\chi^{-1}(\vec{v})\subseteq C_{\vec{v}}$ because for any $\vec{x}\in\chi^{-1}(\vec{v})$ we have $\chi(\vec{x})=\vec{v}$, so by definition of $\chi$ we have $\vec{x}\in C_{\vec{v}}$. Because closures maintain subset containment and because $C_{\vec{v}}$ is a closed set by hypothesis of the \nameref{true-kkm-lemma}, we have $\clos{\chi^{-1}(\vec{v})}\subseteq\clos{C_{\vec{v}}}=C_{\vec{v}}$. It then follows immediately that
\[
\set{\vec{v}\in\set{0,1}^d: \vec{p}\in\chi^{-1}(\vec{v})} \subseteq \set{\vec{v}\in\set{0,1}^d: \vec{p}\in C_{\vec{v}}}
\]
and since the former has cardinality at least $d+1$, so does the latter which proves the \nameref{true-kkm-lemma}.
\end{proof}


\begin{lemma}[KKM/Lebesgue $\Longrightarrow$ Lebesgue]
The \namefullref{kkm-lebesgue} implies the \namefullref{true-lebesgue-covering-theorem}.
\end{lemma}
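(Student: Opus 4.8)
The plan is to mimic almost verbatim the proof of the preceding lemma (KKM/Lebesgue $\Longrightarrow$ KKM), the only change being that we index colors by $[N]$ rather than by $\set{0,1}^d$ and that we use the ``no opposite faces'' property of a Lebesgue cover directly rather than the face-covering property of a KKM cover. First I would take a Lebesgue cover $\C=\set{C_n}_{n\in[N]}$ of $[0,1]^d$; since this is a cover, every point of $[0,1]^d$ lies in some $C_n$, so the function $\chi\colon[0,1]^d\to[N]$ defined by $\chi(\vec{x})=\min\set{n\in[N]:\vec{x}\in C_n}$ is well-defined.

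Next I would verify that $\chi$ is a finite SLKKM coloring of $[0,1]^d$. Finiteness is immediate because the codomain $[N]$ is finite. For the SLKKM property, suppose $\vec{x}^{(0)}$ and $\vec{x}^{(1)}$ lie on opposite faces; by the defining property of a Lebesgue cover no single set $C_n$ contains points on opposite faces, so there is no $n$ with both $\vec{x}^{(0)}\in C_n$ and $\vec{x}^{(1)}\in C_n$. Hence $\set{n\in[N]:\vec{x}^{(0)}\in C_n}$ and $\set{n\in[N]:\vec{x}^{(1)}\in C_n}$ are disjoint nonempty subsets of $[N]$, so in particular they have different minima, giving $\chi(\vec{x}^{(0)})\neq\chi(\vec{x}^{(1)})$.

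Then I would apply the \nameref{kkm-lebesgue} to $\chi$ to obtain a point $\vec{p}\in[0,1]^d$ with $\abs{\set{n\in[N]:\vec{p}\in\clos{\chi^{-1}(n)}}}\geq d+1$, and transfer this back to the cover. The key observation is that $\chi^{-1}(n)\subseteq C_n$ for every $n$ (immediate from the definition of $\chi$), and since each $C_n$ is closed by hypothesis of the \nameref{true-lebesgue-covering-theorem}, closures preserve containment so $\clos{\chi^{-1}(n)}\subseteq\clos{C_n}=C_n$. Therefore $\set{n\in[N]:\vec{p}\in\clos{\chi^{-1}(n)}}\subseteq\set{n\in[N]:\vec{p}\in C_n}$, and since the left-hand set has cardinality at least $d+1$, so does $\set{n\in[N]:\vec{p}\in C_n}$, which is exactly the conclusion of the \nameref{true-lebesgue-covering-theorem}.

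I do not expect a genuine obstacle here; the argument is entirely parallel to the KKM case. The only points worth a moment's attention are that a Lebesgue cover may have $N>2^d$ sets (harmless, since the \nameref{kkm-lebesgue} allows any finite number of colors and the reduction never needs colors identified with vertices), and that the passage from $\clos{\chi^{-1}(n)}$ to $C_n$ genuinely uses both the closedness of $C_n$ and the closure statement in the \nameref{kkm-lebesgue}; this is the one place where the finiteness-free \nameref{kkm-lebesgue-variant} would not by itself suffice.
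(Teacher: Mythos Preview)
Your proposal is correct and follows essentially the same approach as the paper: define $\chi(\vec{x})=\min\set{n\in[N]:\vec{x}\in C_n}$, verify it is a finite SLKKM coloring using the ``no opposite faces'' property, apply the \nameref{kkm-lebesgue} to obtain $\vec{p}$, and then pass from $\clos{\chi^{-1}(n)}$ to $C_n$ via $\chi^{-1}(n)\subseteq C_n$ and closedness of $C_n$. The paper's proof is line-for-line the same argument.
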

\begin{proof}
Let $N\in\N$ and $\C=\set{C_n}_{n\in[N]}$ be a Lebesgue cover of $[0,1]^d$. Because this is a cover, every point of $[0,1]^d$ belongs to some set, so define $\chi$ as follows:
\begin{align*}
    \chi &: [0,1]^d \to [N]\\
    \chi(\vec{x}) &= \min\set{n\in[N]: \vec{x}\in C_{n}}.
\end{align*}
This is trivially a finite SLKKM coloring of $[0,1]^d$ because the codomain of $\chi$ is finite and for $\vec{x}^{(0)}$ and $\vec{x}^{(1)}$ on opposite faces, there is no $n\in[N]$ for which both $\vec{x}^{(0)}\in C_n$ and $\vec{x}^{(1)}\in C_n$ and thus $\set{n\in[N]: \vec{x}^{(0)}\in C_{n}}$ and $\set{n\in[N]: \vec{x}^{(1)}\in C_{n}}$ are disjoint, so $\chi(\vec{x}^{(0)})\not=\chi(\vec{x}^{(1)})$.

By the \nameref{kkm-lebesgue}, there exists $\vec{p}\in[0,1]^d$ such that $\abs{\set{n\in[N]:\vec{p}\in\clos{\chi^{-1}(n)}}}\geq d+1$. Fix such a $\vec{p}$ for the remainder of the proof. For each $n\in[N]$, observe that $\chi^{-1}(n)\subseteq C_n$ because for any $\vec{x}\in\chi^{-1}(n)$ we have $\chi(\vec{x})=n$, so by definition of $\chi$ we have $\vec{x}\in C_n$. Because closures maintain subset containment and because $C_{n}$ is a closed set by hypothesis of the \nameref{true-lebesgue-covering-theorem}, we have $\clos{\chi^{-1}(n)}\subseteq\clos{C_n}=C_{n}$. It then follows immediately that
\[
\set{n\in[N]: \vec{p}\in\chi^{-1}(n)} \subseteq \set{n\in[N]: \vec{p}\in C_n}
\]
and since the former has cardinality at least $d+1$, so does the latter which proves the \nameref{true-lebesgue-covering-theorem}.
\end{proof}


\begin{lemma}[Lebesgue $\Longrightarrow$ KKM/Lebesgue]
The \namefullref{true-lebesgue-covering-theorem} implies the \namefullref{kkm-lebesgue}.
\end{lemma}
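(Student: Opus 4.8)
The plan is to convert the finite SLKKM coloring into a genuine Lebesgue cover, invoke the \nameref{true-lebesgue-covering-theorem}, and then pull the resulting point back through a truncation map. The subtlety to watch for is that one cannot simply use the closures $\set{\overline{\chi^{-1}(c)}}_c$ of the color classes: even though an SLKKM coloring has no color set meeting opposite faces, the closure of a color set can touch two opposite faces of $[0,1]^d$ (for instance $(0,1)\times\set{\tfrac12}$ in $[0,1]^2$), so these closures need not form a Lebesgue cover. The remedy is exactly the ``extension'' trick already used to prove the \nameref{kkm-lebesgue-variant}: enlarge the cube first, so that the color classes in the extension are automatically bounded away from one of each pair of opposite faces.

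Concretely, first I would fix any $\epsilon\in(0,\infty)$, let $\vec f\colon[-\epsilon,1+\epsilon]^d\to[0,1]^d$ be the coordinatewise truncation onto $[0,1]^d$, and define $\gamma\defeq\chi\circ\vec f$, a coloring of $[-\epsilon,1+\epsilon]^d$ with $\range(\gamma)=\range(\chi)=:C'$ (finite by hypothesis). Setting $Y_c=\gamma^{-1}(c)$, I would establish the exact analogue of \Autoref{kkm-lebesgue-variant-subclaim-1}: for each $c\in C'$ and each coordinate $i$, either $\pi_i(Y_c)\subseteq(0,1+\epsilon]$ or $\pi_i(Y_c)\subseteq[-\epsilon,1)$. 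The argument is identical — if some $\vec y\in Y_c$ has $y_i\le 0$ then $\vec f(\vec y)\in\chi^{-1}(c)$ witnesses $0\in\pi_i(\chi^{-1}(c))$, and $y_i\ge 1$ likewise witnesses $1\in\pi_i(\chi^{-1}(c))$, but the SLKKM hypothesis forbids both. Passing to closures, $\pi_i(\overline{Y_c})=\overline{\pi_i(Y_c)}$ is then a closed subset of $[0,1+\epsilon]$ or of $[-\epsilon,1]$, hence misses at least one of the endpoints $-\epsilon,1+\epsilon$; thus $\overline{Y_c}$ contains no pair of points on opposite faces of $[-\epsilon,1+\epsilon]^d$.

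Next I would note that $\set{\overline{Y_c}}_{c\in C'}$ is a Lebesgue cover of $[-\epsilon,1+\epsilon]^d$: it is finite (as $\abs{C'}<\infty$), consists of closed sets, covers because the $Y_c$ partition $[-\epsilon,1+\epsilon]^d$, and has no set meeting opposite faces by the previous step. The \nameref{true-lebesgue-covering-theorem} is invariant under affine bijections between cubes — such a bijection sends faces to faces and preserves oppositeness, so it carries Lebesgue covers to Lebesgue covers and the ``point in $d+1$ sets'' conclusion transfers — so it applies to $[-\epsilon,1+\epsilon]^d$ and produces a point $\vec q\in[-\epsilon,1+\epsilon]^d$ lying in $\overline{Y_c}$ for at least $d+1$ colors $c\in C'$.

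Finally I would take $\vec p\defeq\vec f(\vec q)\in[0,1]^d$ and verify it lies in the closures of those same $d+1$ color classes. Since $\gamma=\chi\circ\vec f$ gives $\vec f(Y_c)\subseteq\chi^{-1}(c)$ and $\vec f$ is continuous, we get $\vec f(\overline{Y_c})\subseteq\overline{\vec f(Y_c)}\subseteq\overline{\chi^{-1}(c)}$; hence $\vec q\in\overline{Y_c}$ forces $\vec p\in\overline{\chi^{-1}(c)}$. So $\vec p$ belongs to the closures of at least $d+1$ color sets, which is the conclusion of the \nameref{kkm-lebesgue}. The only real obstacle is the boundary phenomenon noted at the outset — recognizing that raw closures fail and that enlarging the cube repairs it — after which the proof is routine bookkeeping that mirrors the proof of the \nameref{kkm-lebesgue-variant}.
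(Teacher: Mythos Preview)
Your proposal is correct and follows essentially the same approach as the paper's own proof: both recognize that raw closures of color classes can touch opposite faces, fix this by extending the coloring to a larger cube via coordinatewise truncation (exactly the trick from the proof of the \nameref{kkm-lebesgue-variant}, invoking the analogue of \Autoref{kkm-lebesgue-variant-subclaim-1}), apply the \nameref{true-lebesgue-covering-theorem} on the enlarged cube, and then push the resulting point back through the truncation map using its continuity. The only cosmetic differences are that the paper centers its cube at the origin and argues the final closure step via convergent sequences rather than the one-line $\vec f(\overline{Y_c})\subseteq\overline{\vec f(Y_c)}\subseteq\overline{\chi^{-1}(c)}$ you give.
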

This proof is probably the trickiest of the four. In order to use the hypothesis of the \namefullref{true-lebesgue-covering-theorem}, we can't just close the sets in an SLKKM coloring because the closures might intersect opposite faces. Thus, we first have to extend the coloring, and we do so as we do in the proof of the main result of the paper (the \namefullref{kkm-lebesgue-variant}).
\begin{proof}
For this proof, we assume that the cube is $[-\frac12,\frac12]^d$ instead of $[0,1]^d$. Let $C$ be a finite set and $\chi:[-\frac12,\frac12]^d\to C$ a finite SLKKM coloring of $[-\frac12,\frac12]^d$. Let $\epsilon\in(0,\infty)$ be any fixed value throughout the entirety of the proof. Let $f$ and $\gamma=\chi\circ f$ as in the proof of the \namefullref{kkm-lebesgue-variant} so that $\gamma:[-\frac12-\epsilon,\frac12+\epsilon]^d\to C$ is an extension of the coloring $\chi$ to the larger cube $\gamma:[-\frac12-\epsilon,\frac12+\epsilon]^d$ (see also \Autoref{subfig:1_non_spanning_coloring} and \Autoref{subfig:2_extended_coloring}) with the property that for each color $c\in C$, there exists an orientation $\vec{v}^{(c)}\in\set{-1,1}^d$ so that the set $Y_c\defeq \gamma^{-1}(c)$ of points of color $c$ (according to $\gamma$) is a subset of $\prod_{i=1}^d v^{(c)}_i\cdot(-\frac12,\frac12+\epsilon]$ (see \Autoref{kkm-lebesgue-variant-subclaim-1} in the proof of the \namefullref{kkm-lebesgue-variant}). Because closures maintain subset containment we have
\[
\clos{\gamma^{-1}(c)} = \clos{Y_c} \subseteq \clos{\prod_{i=1}^d v^{(c)}_i\cdot(-\tfrac12,\tfrac12+\epsilon]} = \prod_{i=1}^d v^{(c)}_i\cdot[-\tfrac12,\tfrac12+\epsilon].
\]

This demonstrates that for each color $c\in C$, the set $\gamma^{-1}(c)$ of points given color $c$ does not include points on opposite faces of the cube $[-\frac12-\epsilon,\frac12+\epsilon]^d$ (because if it did, then there would be some coordinate $j\in[d]$ such that $\pi_j(\gamma^{-1}(c))\supseteq\set{-\frac12-\epsilon,\frac12+\epsilon}$, but the containment above shows this is not the case). Thus, $\C=\set{\gamma^{-1}(c)}_{c\in\C}$ is a Lebesgue cover of the cube $[-\frac12-\epsilon,\frac12+\epsilon]^d$ (we could rescale it to $[0,1]^d$ and re-index the cover with $[N]$ for $N=\abs{C}$ to be really formal, but we won't).

By the \nameref{true-lebesgue-covering-theorem}, there exists $\vec{q}\in[-\frac12-\epsilon,\frac12+\epsilon]^d$ such that $\abs{\set{c\in\C:\vec{q}\in\clos{\gamma^{-1}(c)}}}\geq d+1$. Fix such a $\vec{q}$ for the remainder of the proof and let $\vec{p}=f(\vec{q})$ (recalling that $f:[-\frac12-\epsilon,\frac12+\epsilon]^d\to[-\frac12,\frac12=^d$ is as in the proof of the \namefullref{kkm-lebesgue-variant}).

\begin{subclaim}
    For each $c\in C$, if $\vec{q}\in\clos{\gamma^{-1}(c)}$, then $\vec{p}\in\clos{\chi^{-1}(c)}$.
\end{subclaim}
\begin{subclaimproof}
Let $c\in C$ be arbitrary. If $\vec{q}\in\clos{\gamma^{-1}(c)}$, then there is a sequence $\langle \vec{q}^{(j)} \rangle_{j=1}^\infty$ of points in $\gamma^{-1}(c)$ converging to $\vec{q}$. Because for each $j\in\N$ we have $\vec{q}^{(j)}\in\gamma^{-1}(c)$, we have $\gamma(\vec{q}^{(j)})=c$. Then, by the definition of $\gamma$, we have
\[
    c = \gamma(\vec{q}^{(j)}) = \chi(f(\vec{q}^{(j)}))
\]
showing that $f(\vec{q}^{(j)})\in\chi^{-1}(c)$. Since $f$ is a continuous function\footnote{
    It is argued implicitly in \Autoref{kkm-lebesgue-variant-subclaim-4} in the proof of the \namefullref{kkm-lebesgue-variant} that $f$ is Lipschitz with Lipschitz constant $1$. Alternatively, this could be analyzed directly.
}, then $\langle f(\vec{q}^{(j)}) \rangle_{j=1}^\infty$ converges to $f(\vec{q})=\vec{p}$ demonstrating that $\vec{p}\in\clos{\chi^{-1}(c)}$.
\end{subclaimproof}

It then follows immediately that
\[
    \set{c\in C: \vec{p}\in\clos{\gamma^{-1}(c)}} \subseteq \set{c\in C: \vec{p}\in \clos{\chi^{-1}(c)}}
\]
and since the former has cardinality at least $d+1$ (established prior to the claim), so does the latter which proves the \nameref{kkm-lebesgue}.
\end{proof}


\begin{lemma}[KKM $\Longrightarrow$ KKM/Lebesgue]\label{kkm-implies-kkm-lebesgue}
The \namefullref{true-kkm-lemma} implies the \namefullref{kkm-lebesgue}.
\end{lemma}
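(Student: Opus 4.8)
The plan is to turn the SLKKM coloring into a KKM cover indexed by the cube's vertices, apply the \nameref{true-kkm-lemma}, and push the conclusion back to colors --- essentially the reverse of the coloring-from-cover construction used in the proof of \Autoref{corollary:neighborhood-kkm}. I would build the cover \emph{without} assuming finiteness, so that the core argument actually proves the stronger fact (mentioned in the introduction) that some point meets $d+1$ colors in every open neighborhood, and only invoke finiteness at the very end to upgrade this to a closure statement. Concretely: let $\chi\colon[0,1]^d\to C$ be an SLKKM coloring and write $X_c=\chi^{-1}(c)$. The SLKKM hypothesis says exactly that for every color $c$ and coordinate $i\in[d]$ the projection $\pi_i(X_c)$ does not contain both $0$ and $1$, so I can define $g\colon C\to\set{0,1}^d$ by $g(c)_i=1$ if $1\in\pi_i(X_c)$ and $g(c)_i=0$ otherwise; the crucial consequence is that whenever some $\vec{x}\in X_c$ has $x_i\in\set{0,1}$ we have $x_i=g(c)_i$. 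For each $\vec{v}\in\set{0,1}^d$ set $C_{\vec v}\defeq\clos{\bigcup_{c\,:\,g(c)=\vec v}X_c}$, a closed subset of $\R^d$ (possibly empty).

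Next I would check that $\C=\set{C_{\vec v}}_{\vec v\in\set{0,1}^d}$ is a KKM cover. Given a face $F=\prod_i F_i$ and a point $\vec{x}\in F$, let $c=\chi(\vec x)$, so $\vec{x}\in X_c\subseteq C_{g(c)}$; it remains to see $g(c)\in F$. When $F_i=\set{0}$, the fact $x_i=0\in\pi_i(X_c)$ forces $1\notin\pi_i(X_c)$ and hence $g(c)_i=0\in F_i$; when $F_i=\set{1}$, the fact $x_i=1\in\pi_i(X_c)$ gives $g(c)_i=1\in F_i$; and $F_i=[0,1]$ is automatic. Hence $\vec{x}\in\bigcup_{\vec v\in F\cap\set{0,1}^d}C_{\vec v}$, so the KKM-cover condition holds (the sets are closed by construction).

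Then I would apply the \nameref{true-kkm-lemma} to obtain $\vec{p}\in[0,1]^d$ lying in at least $d+1$ of the sets $C_{\vec v}$. For each such $\vec{v}$ we have $\vec{p}\in\clos{\bigcup_{c:g(c)=\vec v}X_c}$, so every open neighborhood of $\vec{p}$ meets some $X_c$ with $g(c)=\vec v$; since $g$ recovers $\vec v$ from $c$, the $d+1$ distinct vertices yield $d+1$ distinct colors met by every neighborhood of $\vec{p}$ --- this is the finiteness-free conclusion. Finally, using $\abs{C}<\infty$, the union $\bigcup_{c:g(c)=\vec v}X_c$ is a finite union, so $C_{\vec v}=\bigcup_{c:g(c)=\vec v}\clos{X_c}$, whence $\vec{p}\in C_{\vec v}$ already places $\vec{p}\in\clos{X_c}$ for some color $c$ with $g(c)=\vec v$; choosing one such color per vertex produces at least $d+1$ distinct colors $c$ with $\vec{p}\in\clos{\chi^{-1}(c)}$, which is the \nameref{kkm-lebesgue}.

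I do not expect a real obstacle. The one place needing care is the definition of $g$ in the first step: it is the SLKKM hypothesis that makes $g(c)_i$ simultaneously consistent with \emph{every} point of $X_c$ lying on a coordinate face, not merely one such point, which is what makes the cover condition in the second step go through. And the last step is where finiteness of $C$ is genuinely used --- it is exactly what lets the closure commute with the (now finite) union; without it one only recovers the ``every open neighborhood of $\vec{p}$ meets $d+1$ colors'' statement, which is precisely why the \nameref{kkm-lebesgue} itself requires the finiteness hypothesis.
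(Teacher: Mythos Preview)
Your proposal is correct and follows essentially the same approach as the paper: the paper defines the same vertex-assignment map (called $f$ there, with $f_i(c)=0$ if $0\in\pi_i(\chi^{-1}(c))$, else $1$ if $1\in\pi_i(\chi^{-1}(c))$, else $0$, which is equivalent to your $g$ under the SLKKM hypothesis), builds the same closed cover $C_{\vec v}=\clos{\bigcup_{c:f(c)=\vec v}\chi^{-1}(c)}$, verifies it is a KKM cover by the same coordinate-by-coordinate check, applies the \nameref{true-kkm-lemma}, and then uses finiteness of $C$ to commute closure past the finite union exactly as you do. Your explicit separation of the finiteness-free ``every neighborhood meets $d+1$ colors'' conclusion from the closure upgrade is a nice expository touch that the paper also makes, slightly less prominently.
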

In the proof we essentially condense an SLKKM coloring to have codomain of cardinality $2^d$---one color associated to each vertex---and then close each color set to apply the \namefullref{true-kkm-lemma}.
\begin{proof}
Let $C$ be a finite set and $\chi:[0,1]^d\to C$ a finite SLKKM coloring. Because $\chi$ does not map points on opposite faces to the same color, this means for each color $c\in C$ and coordinate $i\in[d]$ that the set of points given color $c$ (i.e. $\chi^{-1}(c)$) does not contain a point with $i$th coordinate $0$ and also a point with $i$th coordinate $1$ (i.e. $\pi_{i}(\chi^{-1}(c))\not\supseteq\set{0,1}$).

For each $i\in[d]$, define $f_{i}:C\to\set{0,1}$ by
\[
    f_{i}(c) =
    \begin{cases}
      0 & 0\in\pi_{i}(\chi^{-1}(c))\\
      1 & 1\in\pi_{i}(\chi^{-1}(c))\\
      0 & \text{otherwise}
    \end{cases}.
\]
The function $f_{i}$ is well-defined because the first two cases are mutually exclusive. Then define $f:C\to\set{0,1}^{d}$ by $f(c)=\langle f_{i}(c)\rangle_{i=1}^{d}$, and define the (coloring) function $\zeta:[0,1]^{d}\to\set{0,1}^{d}$ as the composition $f\circ\chi$.

For each $\vec{v}\in\set{0,1}^d$, let $C_{\vec{v}}=\clos{\zeta^{-1}(\vec{v})}$, and let $\C=\set{C_{\vec{v}}}_{\vec{v}\in\set{0,1}^d}$. We claim that $\C$ is a KKM cover of $[0,1]^d$ which we prove by the following claim.

\begin{subclaim}
    For each face $F$ of $[0,1]^d$, we have $F\subseteq\bigcup_{\vec{v}\in F\cap\set{0,1}^d} C_{\vec{v}}$.
\end{subclaim}
\begin{subclaimproof}
    Let $F$ be an arbitrary face of $[0,1]^d$; this means that $F=\prod_{i=1}^d F_i$ where each $F_i$ is either $\set{0}$, $\set{1}$, or $[0,1]$. Let $\vec{x}\in F$ be arbitrary noting that this implies for each coordinate $i\in[d]$ that $x_i\in F_i$. Let $c=\chi(\vec{x})$ (so $\vec{x}\in\chi^{-1}(c))$.
    
    We first show for each coordinate $i\in[d]$ that $f_i(c)\in F_i$ and do so in three cases.
    \begin{enumerate}
        \item If $x_i=0$, then $0=x_i\in\pi_i(\chi^{-1}(c))$, so by definition of $f_i$ we have $f_i(c)=0$ showing that $f_i(c)=0=x_i\in F_i$.
        \item If $x_i=1$, then $1=x_i\in\pi_i(\chi^{-1}(c))$, so by definition of $f_i$ we have $f_i(c)=1$ showing that $f_i(c)=1=x_i\in F_i$.
        \item Otherwise $x_i\in(0,1)$, so because $x_i\in F_i$ we cannot have $F_i=\set{0}$ or $F_i=\set{1}$ and so it must be that $F_i=[0,1]$. Thus, $f_i(c)\in\set{0,1}\subseteq F_i$.
    \end{enumerate}
    
    Now let $\vec{v}^{(0)}=\zeta(\vec{x})$ (so $\vec{x}\in\zeta^{-1}(\vec{v}^{(0)})$) and observe the following:
    \[
        \vec{v}^{(0)} = \zeta(\vec{x}) = f(\chi(\vec{x})) = f(c) = \langle f_i(c) \rangle_{i=1}^d \in \prod_{i=1}^d F_i = F.
    \]
    Thus $\vec{v}^{(0)}\in F$, and also vacuously $\vec{v}^{(0)}=\zeta(\vec{x})\in\set{0,1}^d$. This shows that
    \[
    \vec{x} \in \zeta^{-1}(\vec{v}^{(0)}) \subseteq \clos{\zeta^{-1}(\vec{v}^{(0)})} = C_{\vec{v}^{(0)}} \subseteq \bigcup_{\vec{v}\in F\cap\set{0,1}^d} C_{\vec{v}}.
    \]
    Since $\vec{x}\in F$ was arbitrary, we have $F \subseteq \bigcup_{\vec{v}\in F\cap\set{0,1}^d} C_{\vec{v}}$ as claimed.
\end{subclaimproof}

Because $\C$ is a KKM cover, by the \nameref{true-kkm-lemma}, there exists $\vec{p}\in[0,1]^d$ such that the set $V'\defeq\set{\vec{v}\in\set{0,1}^d:\vec{p}\in C_{\vec{v}}}$ has cardinality at least $d+1$. Fix such a $\vec{p}$ for the remainder of the proof. 

Note that for each $\vec{v}\in V$, we have
  \begin{equation}\label{eq:historic-sperner-kkm-lebesgue}
    \zeta^{-1}(\vec{v}) = (f\circ \chi)^{-1}(\vec{v}) = \chi^{-1}(f^{-1}(\vec{v})) = \bigcup_{c\in f^{-1}(\vec{v})}\chi^{-1}(c).
  \end{equation}

Now, for each $\vec{v}\in V'$, because $\vec{p}$ is in the closure of $\zeta^{-1}(\vec{v})$, any open set containing $\vec{p}$ intersects $\zeta^{-1}(\vec{v})=\bigcup_{c\in f^{-1}(\vec{v})}\chi^{-1}(c)$ and thus intersects $\chi^{-1}(c)$ for some $c\in f^{-1}(\vec{v})$. Let $g(\vec{v})$ denote one such color\footnote{
    We don't need the full axiom of choice here because $C$ has finite cardinality.
}.

Because $f^{-1}(\vec{v})$ and $f^{-1}(\vec{v}\,')$ are trivially disjoint for $\vec{v}\not=\vec{v}\,'$, this means $g(\vec{v})$ and $g(\vec{v}\,')$ are distinct colors so $g:V'\to C$ is an injection which means there are at least $d+1$ colors in $C$ that are intersected by any open set containing $\vec{p}$.

Because $\abs{C}$ is finite, then for each $\vec{v}\in V$, $f^{-1}(\vec{v})\subseteq C$ is finite, then we can use the fact the the closure of a finite union is equal to the finite union of the closures to extend this to
  \begin{align*}
    \vec{p} &\in \bigcap_{\vec{v}\in V'} \clos{\zeta^{-1}(\vec{v})}\\
            &= \bigcap_{\vec{v}\in V'} \clos{\bigcup_{c\in f^{-1}(\vec{v})}\chi^{-1}(c)}\\
            &= \bigcap_{\vec{v}\in V'} \bigcup_{c\in f^{-1}(\vec{v})}\clos{\chi^{-1}(c)}\tag{$f^{-1}(\vec{v})$ is finite}
  \end{align*}
and thus, for each $\vec{v}\in V'$, $\vec{p}$ belongs to the closure of $\chi^{-1}(c)$ for some $c\in f^{-1}(\vec{v})$. By the same argument there are at least $d+1$ such colors in $C$. That is, $\abs{\set{c\in C: \vec{p}\in\clos{\chi^{-1}(c)}}}\geq d+1$ which proves the \nameref{kkm-lebesgue}.
\end{proof}

\subsection{Background for the \nameref*{kkm-lebesgue-variant}}
\label{appendix-sub:brunn-minkowski}

\restatableSmallerCeiling
\begin{proof}
Let $n=\ceil{\alpha}$. This implies that $\alpha>n-1$ (otherwise $\alpha\leq n-1$ so $\ceil{\alpha}\leq n-1$). Thus $(n-1,\alpha)$ is non-empty and we can take any $\gamma\in(n-1,\alpha)$. Then $n-1<\gamma\leq\ceil{\gamma}\leq\ceil{\alpha}=n$ showing that $\ceil{\gamma}=n$ as well.
\end{proof}


\restatableBrunnMinkowskiBoundLemma
\begin{proof}
We will show that $(x^{1/d}+\alpha)^d - x(1+\alpha)^d\geq 0$ for these parameters. Let For $d$, $\alpha$ as above, let $f:[0,1]\to\R$ be defined by $f(x)=(x^{1/d}+\alpha)^d - x(1+\alpha)^d$. Observe that $f(0)=\alpha^d\geq0$ and $f(1)=(1+\alpha)^d-(1+\alpha)^d=0$. We will now prove that $f$ is convex on the domain\footnote{
    Actually we could have defined the domain of $f$ to be $[0,\infty)$ and we show that $f$ is convex on that domain. However, we only have need of the interval $[0,1]$ because that is where we know that $f$ is non-negative.
} $[0,1]$. This will be sufficient to prove the claim because $f$ is also non-negative at $0$ and at $1$.

We show that $f$ is convex on $[0,1]$ by considering its second derivative on $(0,1]$. We start with the first derivative with respect to $x$.
\begin{align*}
    \frac{d}{dx}f(x) & = \frac{d}{dx} \left[(x^{1/d}+\alpha)^d - x(1+\alpha)^d\right] \\
    &= d(x^{1/d}+\alpha)^{d-1}\cdot\frac{1}{d}x^{(1/d)-1} - (1+\alpha)^d \\
    &= (x^{1/d}+\alpha)^{d-1}x^{(1/d)-1} - (1+\alpha)^d \\
\end{align*}
where we use the standard convention that $0^0=1$. Then
\begin{align*}
    \frac{d^2}{dx^2}f(x) & = \frac{d}{dx} \left[(x^{1/d}+\alpha)^{d-1}x^{1/d-1} - (1+\alpha)^d\right] \\
    &= (x^{1/d}+\alpha)^{d-1}\cdot\left(\frac1d-1\right)x^{1/d-2}+x^{1/d-1}\cdot(d-1)(x^{1/d}+\alpha)^{d-2}\frac1d x^{1/d-1} \\
    &= (x^{1/d}+\alpha)(x^{1/d}+\alpha)^{d-2}\left(-\frac{d-1}d\right)\left(x^{1/d-2}\right) + (x)\left(x^{1/d-2}\right)\left(\frac{d-1}{d}\right)\left(x^{1/d}+\alpha\right)^{d-2}\left(x^{1/d-1}\right) \\
    &= \frac{(x^{1/d}+\alpha)^{d-2}(d-1)\left(x^{1/d-2}\right)}{d}\left[ -(x^{1/d}+\alpha) + (x)\left(x^{1/d-1}\right) \right] \\
    &= \frac{(x^{1/d}+\alpha)^{d-2}(d-1)\left(x^{1/d-2}\right)}{d}\left[ -(x^{1/d}+\alpha) + \left(x^{1/d}\right) \right] \\
    &= \frac{-\alpha(x^{1/d}+\alpha)^{d-2}(d-1)\left(x^{1/d-2}\right)}{d} \\
    &= \frac{-\alpha(x^{1/d}+\alpha)^{d}(d-1)\left(x^{1/d}\right)}{d(x^{1/d}+\alpha)^{2}x^2}.
\end{align*}
Note that because $\alpha\geq0$ and $d\geq1$, we have for $x\in(0,\infty]$ that the denominator is positive and the numerator is non-positive showing that $\frac{d^2}{dx^2}f(x)\leq0$ so $f$ is convex on $(0,\infty)$ and by continuity on $[0,\infty)$. Because $f(x)$ is convex on $[0,1]$ and non-negative at $x=0$ and $x=1$, it is non-negative on all of $[0,1]$.
\end{proof}


In order to prove \Autoref{outer-measure-brunn-minkowsi-bound-l-infty} we need some simple background results.

The following fact says that the Minkowski sum of a set $X$ and an open ball at the origin can be viewed as a union of open balls positioned at each point of $X$. This is useful not only conceptually but because it guarantees that this Minkowski sum is open and thus measurable.

\begin{restatable}{fact}{restatableMinkowskiBubbleUnion}\label{minkowski-bubble-union}
For any normed vector space, given a set $X$ and $\epsilon\in(0,\infty)$, then
\[
X+\ballo{\epsilon}{\vec{0}} = \bigcup_{\vec{x}\in X}\ballo{\epsilon}{\vec{x}}.
\]
The same can be said replacing open balls with closed balls.
\end{restatable}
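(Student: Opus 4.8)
The plan is to prove the two set containments directly from the definition of the Minkowski sum, using the translation-invariance of balls in a normed space. The one structural fact underlying everything is that for any point $\vec{x}$ we have $\ballo{\epsilon}{\vec{x}}=\set{\vec{x}}+\ballo{\epsilon}{\vec{0}}$, since $\norm{\vec{y}-\vec{x}}<\epsilon$ if and only if $\vec{y}-\vec{x}\in\ballo{\epsilon}{\vec{0}}$, i.e. $\vec{y}\in\vec{x}+\ballo{\epsilon}{\vec{0}}$. Given this, the claimed identity is the special case (with first argument $X$ and second argument $\ballo{\epsilon}{\vec{0}}$) of the elementary fact that a Minkowski sum distributes over unions in its first argument, namely $A+B=\bigcup_{\vec{a}\in A}\big(\set{\vec{a}}+B\big)$ for subsets of an abelian group. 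So morally there is nothing to prove beyond unwinding definitions.

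Concretely I would verify both inclusions. For ``$\subseteq$'': if $\vec{z}\in X+\ballo{\epsilon}{\vec{0}}$, write $\vec{z}=\vec{x}+\vec{b}$ with $\vec{x}\in X$ and $\norm{\vec{b}}<\epsilon$; then $\norm{\vec{z}-\vec{x}}=\norm{\vec{b}}<\epsilon$, so $\vec{z}\in\ballo{\epsilon}{\vec{x}}\subseteq\bigcup_{\vec{x}'\in X}\ballo{\epsilon}{\vec{x}'}$. For ``$\supseteq$'': if $\vec{z}\in\ballo{\epsilon}{\vec{x}}$ for some $\vec{x}\in X$, set $\vec{b}=\vec{z}-\vec{x}$; then $\norm{\vec{b}}<\epsilon$, so $\vec{b}\in\ballo{\epsilon}{\vec{0}}$ and $\vec{z}=\vec{x}+\vec{b}\in X+\ballo{\epsilon}{\vec{0}}$. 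This closes the proof for open balls.

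For the closed-ball version, the same two arguments go through verbatim with each strict inequality $<\epsilon$ replaced by $\leq\epsilon$ (equivalently, $\ballo{\cdot}{\cdot}$ replaced by $\ballc{\cdot}{\cdot}$ throughout); nothing else changes. The degenerate case $X=\emptyset$ makes both sides empty and needs no separate treatment. There is no genuine obstacle here: the statement is purely set-theoretic, so the only thing to be careful about is keeping the quantifiers straight in the two inclusions; I would present the argument in the compact ``$\ballo{\epsilon}{\vec{x}}=\vec{x}+\ballo{\epsilon}{\vec{0}}$, then apply distributivity of Minkowski sum over unions'' form, since it is cleaner than the element-chasing and makes transparent why the claim holds in any normed vector space.
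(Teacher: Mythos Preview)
Your proof is correct and follows essentially the same approach as the paper: both verify the two inclusions by direct element-chasing, using the translation identity $\ballo{\epsilon}{\vec{x}}=\vec{x}+\ballo{\epsilon}{\vec{0}}$, and then note that the closed-ball case is identical with strict inequalities replaced by non-strict ones.
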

\begin{proof}
We show this only for the open balls. Switching all open balls in the proof with closed ones gives the proof for closed balls.

($\subseteq$) A generic element of $X+\ballo{\varepsilon}{\vec{0}}$ is $\vec{x}+\vec{b}$ for some $\vec{x}\in X$ and $\vec{b}\in \ballo{\varepsilon}{\vec{0}}$ which means $\vec{x}+\vec{b}\in\vec{x}+\ballo{\varepsilon}{\vec{0}}=\ballo{\varepsilon}{\vec{x}}\subseteq \bigcup_{\vec{x}\in X}\ballo{\varepsilon}{\vec{x}}$.

($\supseteq$) Given $\vec{y}\in \bigcup_{\vec{x}\in X}\ballo{\varepsilon}{\vec{x}}$ there is some particular $\vec{x}\in X$ such that $y\in\ballo{\varepsilon}{\vec{x}}=\vec{x}+\ballo{\varepsilon}{\vec{0}}$ which means $\vec{y}=\vec{x}+\vec{b}$ for some $\vec{b}\in \ballo{\varepsilon}{\vec{0}}$, and since $\vec{x}\in X$, we have $\vec{y}\in X+\ballo{\varepsilon}{\vec{0}}$.
\end{proof}


The next fact (\Autoref{sum-of-balls}) says that we can decompose a ball into a Minkowski sum of two smaller balls. We will use this along with associativity of the Minkowski sum to deal swiftly with issues of non-measurable sets.

\begin{restatable}{fact}{restatableSumOfBalls}\label{sum-of-balls}
For any normed vector space, and any $\alpha,\beta\in(0,\infty)$, it holds that $\ballo{\alpha}{\vec{0}}+\ballo{\beta}{\vec{0}}=\ballo{\alpha+\beta}{\vec{0}}$.
\end{restatable}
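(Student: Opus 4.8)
The plan is to establish the set equality by proving the two inclusions separately; both are elementary consequences of the norm axioms, so I would not expect any genuine obstacle.

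For the forward inclusion $\ballo{\alpha}{\vec{0}}+\ballo{\beta}{\vec{0}}\subseteq\ballo{\alpha+\beta}{\vec{0}}$, I would take an arbitrary element of the Minkowski sum, which by definition has the form $\vec{a}+\vec{b}$ with $\norm{\vec{a}}<\alpha$ and $\norm{\vec{b}}<\beta$, and apply the triangle inequality: $\norm{\vec{a}+\vec{b}}\leq\norm{\vec{a}}+\norm{\vec{b}}<\alpha+\beta$, so $\vec{a}+\vec{b}\in\ballo{\alpha+\beta}{\vec{0}}$.

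For the reverse inclusion, the key idea is to split a vector of norm strictly less than $\alpha+\beta$ in proportion to the two radii rather than, say, in half. Given $\vec{y}$ with $\norm{\vec{y}}<\alpha+\beta$, I would set $t=\tfrac{\alpha}{\alpha+\beta}\in(0,1)$ and write $\vec{y}=t\vec{y}+(1-t)\vec{y}$. By homogeneity of the norm, $\norm{t\vec{y}}=t\norm{\vec{y}}<t(\alpha+\beta)=\alpha$ and $\norm{(1-t)\vec{y}}=(1-t)\norm{\vec{y}}<(1-t)(\alpha+\beta)=\beta$, so $t\vec{y}\in\ballo{\alpha}{\vec{0}}$ and $(1-t)\vec{y}\in\ballo{\beta}{\vec{0}}$, which exhibits $\vec{y}$ as a point of the Minkowski sum. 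This covers the case $\vec{y}=\vec{0}$ as well, since then both summands are $\vec{0}$.

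The only thing to be mildly careful about is the choice of the splitting ratio in the reverse inclusion, which must depend on $\alpha$ and $\beta$; aside from that the argument is routine, and it adapts verbatim to closed balls by replacing every strict inequality with its non-strict analogue.
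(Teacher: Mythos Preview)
Your proof is correct. The forward inclusion is identical to the paper's. For the reverse inclusion, however, the paper takes a more roundabout route: given $\vec{x}$ with $\norm{\vec{x}}<\alpha+\beta$, it first disposes of the cases $\norm{\vec{x}}<\alpha$ and $\norm{\vec{x}}<\beta$ separately (writing $\vec{x}=\vec{x}+\vec{0}$ or $\vec{0}+\vec{x}$), and in the remaining case sets $\varepsilon=\alpha+\beta-\norm{\vec{x}}$ and decomposes $\vec{x}$ as $(\alpha-\tfrac{\varepsilon}{2})\tfrac{\vec{x}}{\norm{\vec{x}}}+(\beta-\tfrac{\varepsilon}{2})\tfrac{\vec{x}}{\norm{\vec{x}}}$. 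The case split is there precisely to guarantee $\norm{\vec{x}}\neq 0$ so that the normalization $\vec{x}/\norm{\vec{x}}$ makes sense. Your proportional split $\vec{y}=\tfrac{\alpha}{\alpha+\beta}\vec{y}+\tfrac{\beta}{\alpha+\beta}\vec{y}$ sidesteps the division by $\norm{\vec{y}}$ entirely, so no case analysis is needed and the argument is uniform (including at $\vec{y}=\vec{0}$). Both approaches are elementary, but yours is the cleaner one; the paper's decomposition has the minor feature that the two pieces have norms exactly $\alpha-\tfrac{\varepsilon}{2}$ and $\beta-\tfrac{\varepsilon}{2}$, which is not needed here.
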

\begin{proof}
($\subseteq$) A generic element of $\ballo{\alpha}{\vec{0}}+\ballo{\beta}{\vec{0}}$ is $\vec{a}+\vec{b}$ for $\vec{a}\in\ballo{\alpha}{\vec{0}}$ and $\vec{b}\in\ballo{\beta}{\vec{0}}$. Then $\norm{\vec{a}}<\alpha$ and $\norm{\vec{b}}<\beta$, so $\norm{\vec{a}+\vec{b}}<\alpha+\beta$ showing $\vec{a}+\vec{b}\in\ballo{\alpha+\beta}{\vec{0}}$.

($\supseteq$) Let $\vec{x}\in\ballo{\alpha+\beta}{\vec{0}}$ which implies $\norm{\vec{x}}<\alpha+\beta$. If $\norm{\vec{x}}<\alpha$, then $\vec{x}\in\ballo{\alpha}{\vec{0}}$ and $\vec{0}\in\ballo{\beta}{\vec{0}}$, so $\vec{x}=\vec{x}+\vec{0}\in\ballo{\alpha}{\vec{0}}+\ballo{\beta}{\vec{0}}$. Similarly, if $\norm{\vec{x}}<\beta$, then $\vec{x}=\vec{0}+\vec{x}\in\ballo{\alpha}{\vec{0}}+\ballo{\beta}{\vec{0}}$. In either case we would be done, so we may now assume that $\norm{\vec{x}}\geq\alpha,\beta$. Let $\varepsilon=\alpha+\beta-\norm{\vec{x}}\in(0,\infty)$. Since $\norm{\vec{x}}\geq\alpha$, we have $\varepsilon\leq\beta$, and because $\norm{\vec{x}}\geq\beta$, we have $\varepsilon\leq\alpha$. This shows $\frac\varepsilon2<\alpha,\beta$. Let $\vec{a}=(\alpha-\frac\varepsilon2)\frac{\vec{x}}{\norm{\vec{x}}}$ and $\vec{b}=(\beta-\frac\varepsilon2)\frac{\vec{x}}{\norm{\vec{x}}}$ noting that $\norm{\vec{a}}=\alpha-\frac\varepsilon2\in(0,\alpha)$ and $\norm{\vec{b}}=\beta-\frac\varepsilon2\in(0,\beta)$. Also, note that $\vec{a}+\vec{b}=(\alpha-\frac\varepsilon2+\beta-\frac\varepsilon2)\frac{\vec{x}}{\norm{\vec{x}}}=\norm{\vec{x}}\frac{\vec{x}}{\norm{\vec{x}}}=\vec{x}$ which shows $\vec{x}\in\ballo{\alpha}{\vec{0}}+\ballo{\beta}{\vec{0}}$.
\end{proof}


The final fact (\Autoref{antisymmetry-containment-pre-claim-minkowski-sum}), while also very simple, is the key change of perspective that allowed us to prove the main result. It says that if we are checking if a member $X$ in our partition intersects an $\epsilon$-ball located at $\vec{p}$ (in order to see how many such members there are), we can instead enlarge $X$ by taking its Minkowski sum with the origin-centered $\epsilon$-ball, and check if this enlarged member contains the point $\vec{p}$.

\begin{restatable}{fact}{restatableAntisymmetryContainmentPreClaimMinkowskiSum}\label{antisymmetry-containment-pre-claim-minkowski-sum}
For any normed vector space, for any set $X$ and vector $\vec{p}$, the following are equivalent:
\begin{enumerate}
    \item $\ballc{\epsilon}{\vec{p}}\cap X\not=\emptyset$
    \item $\vec{p}\in X+\ballc{\epsilon}{\vec{0}}$
\end{enumerate}
The same can be said replacing both closed balls with open balls.
\end{restatable}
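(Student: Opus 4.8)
The plan is to prove the equivalence by simply unwinding the definition of the Minkowski sum and using the symmetry of the metric induced by a norm (namely that $\norm{\vec{x}-\vec{p}}=\norm{\vec{p}-\vec{x}}$). The statement is short enough that I would prove both implications directly rather than appealing to any machinery, though one could alternatively invoke \Autoref{minkowski-bubble-union} to rewrite $X+\ballc{\epsilon}{\vec{0}}$ as $\bigcup_{\vec{x}\in X}\ballc{\epsilon}{\vec{x}}$ and observe that membership of $\vec{p}$ in this union says exactly that some $\vec{x}\in X$ lies in $\ballc{\epsilon}{\vec{p}}$.

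For the forward direction, I would start from $\ballc{\epsilon}{\vec{p}}\cap X\not=\emptyset$, pick a point $\vec{x}$ in the intersection, set $\vec{b}=\vec{p}-\vec{x}$, note $\norm{\vec{b}}=\norm{\vec{p}-\vec{x}}=\norm{\vec{x}-\vec{p}}\leq\epsilon$ so $\vec{b}\in\ballc{\epsilon}{\vec{0}}$, and conclude $\vec{p}=\vec{x}+\vec{b}\in X+\ballc{\epsilon}{\vec{0}}$. For the reverse direction, I would start from $\vec{p}\in X+\ballc{\epsilon}{\vec{0}}$, write $\vec{p}=\vec{x}+\vec{b}$ with $\vec{x}\in X$ and $\norm{\vec{b}}\leq\epsilon$, and observe $\norm{\vec{x}-\vec{p}}=\norm{\vec{b}}\leq\epsilon$, so $\vec{x}\in\ballc{\epsilon}{\vec{p}}\cap X$. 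The open-ball version is literally the same argument with every ``$\leq\epsilon$'' replaced by ``$<\epsilon$,'' so I would just remark that the identical reasoning applies, as the proofs of \Autoref{minkowski-bubble-union} and \Autoref{sum-of-balls} also did.

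There is no real obstacle here; the only thing to be careful about is not to assume the norm is anything more than a norm (so the key facts used are homogeneity/symmetry $\norm{-\vec{v}}=\norm{\vec{v}}$ and nothing about inner products), and to keep the closed/open cases cleanly parallel. I expect the entire proof to be three or four lines.
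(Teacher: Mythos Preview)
Your proposal is correct and essentially identical to the paper's proof: both directions are handled by picking a witness point, writing $\vec{p}=\vec{x}+\vec{b}$ (or equivalently observing $\vec{p}\in\vec{x}+\ballc{\epsilon}{\vec{0}}$), and using $\norm{\vec{p}-\vec{x}}=\norm{\vec{x}-\vec{p}}\leq\epsilon$. The paper likewise dispatches the open-ball case with a one-line remark that replacing closed balls with open ones throughout gives the same proof.
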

\begin{proof}
We show this only for the closed balls. Switching all closed balls in the proof with open ones gives the proof for open balls.

($\Longrightarrow$) If $\ballc{\varepsilon}{\vec{p}}\cap X\not=\emptyset$, then there exists $\vec{y}\in\ballc{\varepsilon}{\vec{p}}\cap X$, and since $\vec{y}\in\ballc{\varepsilon}{\vec{p}}$ we have $\norm{\vec{y}-\vec{p}}\leq\varepsilon$ so $\vec{p}\in\ballc{\varepsilon}{\vec{y}}=\vec{y}+\ballc{\varepsilon}{\vec{0}}$, and since $\vec{y}\in X$, $\vec{y}+\ballc{\varepsilon}{\vec{0}}\subseteq X+\ballc{\varepsilon}{\vec{0}}$ showing that $\vec{p}\in X+\ballc{\varepsilon}{\vec{0}}$.

($\Longleftarrow$) If $\vec{p}\in X+\ballc{\varepsilon}{\vec{0}}$ then there exists $\vec{x}\in X$ and $\vec{b}\in\ballc{\varepsilon}{\vec{0}}$ such that $\vec{p}=\vec{x}+\vec{b}$. Thus $\vec{p}-\vec{x}=\vec{b}$, so $\norm{\vec{p}-\vec{x}}=\norm*{\vec{b}}\leq\varepsilon$, so $\vec{x}\in\ballc{\varepsilon}{\vec{p}}$. Since $\vec{x}$ belongs to both $X$ and $\ballc{\varepsilon}{\vec{p}}$, their intersection is non-empty.
\end{proof}


With this background, we can state the \nameref{generalized-brunn-minkowski-inequality} and prove \Autoref{outer-measure-brunn-minkowsi-bound-l-infty}. The \nameref*{generalized-brunn-minkowski-inequality} gives a bound on the volume of Minkowski sums. It is common in measure theory for $d$th roots of $d$-dimensional volumes to show up and they serve as a type of characteristic length scale. With this interpretation, the \nameref*{generalized-brunn-minkowski-inequality} says that the characteristic length scale of a Minkowski sum is no less than the sum of characteristic length scales of the parts.

\begin{restatable}[Generalized Brunn-Minkowski Inequality]{theorem-known}{restatable-generalized-brunn-minkowski-inequality}\label{generalized-brunn-minkowski-inequality}
Let $d\in\N$ and $A,B\subseteq\R^d$ be non-empty and Lebesgue measurable such that $A+B$ is also Lebesgue measurable. Then
\[
m(A+B)^\frac1d \geq m(A)^\frac1d + m(B)^\frac1d.
\]
\end{restatable}

This version of the statement can be obtained from \cite[Equation~11]{gardner_brunn-minkowski_2002}; in that survey, Gardner states this theorem with a requirement that the sets be bounded, but in the following paragraph notes that this is not necessary and the requirement is only stated for convenience of the presentation in that survey. In the theorem, the requirement that $A+B$ is Lebesgue measurable is not a triviality; Gardner discusses that there exist known Lebesgue measurable sets $A$ and $B$ such that the Minkowski sum $A+B$ is not Lebesgue measurable as shown in \cite{sierpinski_sur_1920}. The next result gives us a way to circumvent this issue in our application even if the members of our partition are not measurable by taking $B$ to be an open set so that the sum $A+B$ is open (and thus measurable), and using the outer measure of $A$ so that we don't need the assumption that $A$ is measurable.


Now we can prove the result which immediate implies \Autoref{outer-measure-brunn-minkowsi-bound-l-infty} when we instantiate it with the $\ell_\infty$ norm specifically.

\begin{lemma}\label{outer-measure-brunn-minkowsi-bound}
Let $d\in\N$ and let $\R^d$ be equipped with any norm $\norm{\cdot}$. Let $Y\subseteq\R^d$, and $\epsilon\in(0,\infty)$. Then $Y+\ballostd{\epsilon}{\vec{0}}$ is open (and thus Borel measurable), and $m(Y+\ballostd{\epsilon}{\vec{0}})\geq \left(m_{out}(Y)^\frac1d+\epsilon\cdot (\unitballmeasurestd)^\frac1d\right)^d$.
\end{lemma}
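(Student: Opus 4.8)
The plan is to dispatch the topological assertion using \Autoref{minkowski-bubble-union}, and then to obtain the measure bound by ``absorbing'' a sliver of the $\epsilon$-ball into $Y$ so as to replace $Y$ by an open (hence measurable) set, applying the \nameref{generalized-brunn-minkowski-inequality} to that enlarged set together with the leftover ball, and finally passing to a limit. Throughout, every ball is taken with respect to the fixed norm $\norm{\cdot}$, so \Autoref{minkowski-bubble-union} and \Autoref{sum-of-balls} apply verbatim.

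First I would observe that by \Autoref{minkowski-bubble-union}, $Y+\ballostd{\epsilon}{\vec{0}}=\bigcup_{\vec{y}\in Y}\ballostd{\epsilon}{\vec{y}}$ is a union of open balls, hence open, hence Borel and therefore Lebesgue measurable; the same reasoning shows $Y+\ballostd{\delta}{\vec{0}}$ is open for every $\delta>0$, a fact used below.

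For the measure bound I would assume $Y\neq\emptyset$ (nonemptiness is needed to invoke Brunn--Minkowski and holds in the intended application, where $Y$ is a nonempty color class; and if $m_{out}(Y)=\infty$ both sides of the claimed inequality are infinite, so one may also assume $m_{out}(Y)<\infty$). Fix $\delta\in(0,\epsilon)$ and set $A_{\delta}\defeq Y+\ballostd{\delta}{\vec{0}}$. By \Autoref{sum-of-balls} and the commutativity and associativity of the Minkowski sum,
\[
Y+\ballostd{\epsilon}{\vec{0}}=Y+\bigl(\ballostd{\delta}{\vec{0}}+\ballostd{\epsilon-\delta}{\vec{0}}\bigr)=A_{\delta}+\ballostd{\epsilon-\delta}{\vec{0}}.
\]
Now $A_{\delta}$ and $\ballostd{\epsilon-\delta}{\vec{0}}$ are open and nonempty, and their Minkowski sum is open (hence measurable), so the \nameref{generalized-brunn-minkowski-inequality} gives
\[
m\bigl(Y+\ballostd{\epsilon}{\vec{0}}\bigr)^{1/d}\geq m(A_{\delta})^{1/d}+m\bigl(\ballostd{\epsilon-\delta}{\vec{0}}\bigr)^{1/d}.
\]
Since $Y\subseteq A_{\delta}$ with $A_{\delta}$ measurable, $m(A_{\delta})=m_{out}(A_{\delta})\geq m_{out}(Y)$; and by the $r\mapsto r^{d}$ scaling of Lebesgue measure, $m(\ballostd{\epsilon-\delta}{\vec{0}})=(\epsilon-\delta)^{d}\,\unitballmeasurestd$. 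Substituting,
\[
m\bigl(Y+\ballostd{\epsilon}{\vec{0}}\bigr)^{1/d}\geq m_{out}(Y)^{1/d}+(\epsilon-\delta)\,(\unitballmeasurestd)^{1/d}.
\]
The left side does not depend on $\delta$, so letting $\delta\downarrow0$ and raising to the $d$th power yields the asserted inequality. Specializing to $\norm{\cdot}=\norm{\cdot}_{\infty}$, where the unit ball $(-1,1)^{d}$ has volume $\unitballmeasureinf=2^{d}$ so that $(\unitballmeasureinf)^{1/d}=2$, recovers \Autoref{outer-measure-brunn-minkowsi-bound-l-infty}.

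The main obstacle is precisely the possible non-measurability of $Y$: one cannot feed $Y$ directly into the \nameref{generalized-brunn-minkowski-inequality}, and since we need a \emph{lower} bound on $m(Y+\ballostd{\epsilon}{\vec{0}})$, enlarging $Y$ to a measurable hull is useless. The $\delta$-thickening resolves this by trading the non-measurable $Y$ for the open set $A_{\delta}$ at the cost of shrinking the remaining radius from $\epsilon$ to $\epsilon-\delta$, a loss that disappears in the limit; everything else is routine manipulation of Minkowski sums and of the scaling of Lebesgue measure.
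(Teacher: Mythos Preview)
Your proof is correct and essentially identical to the paper's: both split the $\epsilon$-ball via \Autoref{sum-of-balls}, absorb one piece into $Y$ to get an open set, apply the \nameref{generalized-brunn-minkowski-inequality}, bound below by $m_{out}(Y)$, and pass to a limit; the only difference is the bookkeeping variable (you take $\delta\downarrow 0$ where the paper takes $\epsilon'\uparrow\epsilon$, i.e.\ $\delta=\epsilon-\epsilon'$). You are in fact slightly more careful than the paper in flagging the nonemptiness hypothesis, which is genuinely needed since the stated inequality fails for $Y=\emptyset$.
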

\begin{proof}
By \Autoref{minkowski-bubble-union}, $Y+\ballostd{\epsilon}{\vec{0}}=\bigcup_{\vec{y}\in Y}\ballostd{\epsilon}{\vec{y}}$  which is a union of open sets, so is itself open and thus Borel measurable. Now, for any $\epsilon'\in(0,\epsilon)$, observe that by \Autoref{sum-of-balls}, $\ballostd{\epsilon}{\vec{0}}=\ballostd{\epsilon-\epsilon'}{\vec{0}}+\ballostd{\epsilon'}{\vec{0}}$ and thus, this sum is measurable because it is an open ball. Using this equality and the associativity of the Minkowski sum, we have 
\[
Y+\ballostd{\epsilon}{\vec{0}} = Y + \left[\ballostd{\epsilon-\epsilon'}{\vec{0}}+\ballostd{\epsilon'}{\vec{0}}\right] = \left[Y + \ballostd{\epsilon-\epsilon'}{\vec{0}}\right] +\ballostd{\epsilon'}{\vec{0}}.
\]
Thus, we have the following inequalities:
\begin{align*}
    m\left(Y+\ballostd{\epsilon}{\vec{0}}\right) &= m\left(\left[Y + \ballostd{\epsilon-\epsilon'}{\vec{0}}\right] +\ballostd{\epsilon'}{\vec{0}}\right) \tag{Above} \\
    &\geq 
    \left( 
        m\left(Y + \ballostd{\epsilon-\epsilon'}{\vec{0}}\right)^{\frac1d}
        +
        m\left(\ballostd{\epsilon'}{\vec{0}}\right)^{\frac1d}
    \right)^d
    \intertext{The above comes from the \namefullref{generalized-brunn-minkowski-inequality} noting that as demonstrated above, both sets $Y + \ballostd{\epsilon-\epsilon'}{\vec{0}}$ and $\ballostd{\epsilon'}{\vec{0}}$ are open and thus measurable. We continue.}
    &\geq
    \left( 
        m_{out}\left(Y\right)^{\frac1d}
        +
        m\left(\ballostd{\epsilon'}{\vec{0}}\right)^{\frac1d}
    \right)^d
    \intertext{The above inequality comes from the definition of the outer measure of $Y$ being the infimum of the measures of all measurable supersets of $Y$. Since $Y\subseteq Y+\ballostd{\epsilon'}{\vec{0}}$, we get the inequality above. Continuing, we have the following:}
    &=
    \left( 
        m_{out}\left(Y\right)^{\frac1d}
        +
        m\left(\epsilon'\cdot\ballostd{1}{\vec{0}}\right)^{\frac1d}
    \right)^d
    \tag{Scaling of norm-based balls} \\
    &=
    \left( 
        m_{out}\left(Y\right)^{\frac1d}
        +
        \left[(\epsilon')^d \cdot m\left(\ballostd{1}{\vec{0}}\right)\right]^{\frac1d}
    \right)^d
    \tag{Scaling for Lebesgue measure} \\
    &=
    \left( 
        m_{out}\left(Y\right)^{\frac1d}
        +
        \epsilon' \cdot (\unitballmeasurestd)^{\frac1d}
    \right)^d
    \tag{Algebra and $\unitballmeasurestd\defeq m\left(\ballostd{1}{\vec{0}}\right)$}
\end{align*}
Since the inequality above holds for all $\epsilon'\in(0,\epsilon)$, it must also hold in the limit (keeping $d$ and $Y$ fixed):
\[
m\left(Y+\ballostd{\epsilon}{\vec{0}}\right) \geq \lim_{\epsilon'\to\epsilon}\left[\left(m_{out}\left(Y\right)^{\frac1d}+\epsilon' \cdot (\unitballmeasurestd)^{\frac1d}\right)^d\right] = \left(m_{out}\left(Y\right)^{\frac1d}+\epsilon \cdot (\unitballmeasurestd)^{\frac1d}\right)^d
\]
which concludes the proof.
\end{proof}


\restatableOuterMeasureBrunnMinkowskiBoundLInfty
\begin{proof}
This follows immediately from \Autoref{outer-measure-brunn-minkowsi-bound} noting that for the $\ell_\infty$ norm, $\unitballmeasureinf=2^d$.
\end{proof}

\section{Proof of the Folklore \nameref*{lower-bound-cover-number-Rd}}
\label{appendix:measure-theory}

\renewcommand{\A}{\mathcal{A}}
\newcommand{\FF}{\mathcal{F}}

In this appendix, we provide a proof of the known \nameref{lower-bound-cover-number-Rd} restated below.

\restatableLowerBoundCoverNumberRd*

To prove it, we build up the proof via three results: \Autoref{exact-measure-of-multiplicity} handles a general measure space where each point is covered exactly the same number of times and shows what the total measure of the covering sets must be. \Autoref{upper-bound-measure-of-multiplicity} uses this as a lemma to prove what the total measure of the covering sets must be even if points are not all covered the same number of times. \Autoref{lower-bound-cover-number} changes the perspective and focuses on how many times some point must be covered given the total measure of the covering sets. Finally, \numberfullref{lower-bound-cover-number-Rd} follows as an immediate corollary for $\R^d$ specifically.

Throughout this section, we use the word ``countable'' to mean finite or countably infinite.

\begin{fact}[Interchange of Countable Sums with Non-negative Terms]\label{interchange-of-countable-sums}
If $I,J$ are countable sets, and $a_{i,j}\geq0$ for all $(i,j)\in I\times J$, then
\[
    \sum_{i\in I}\sum_{j\in J}a_{i,j} = \sum_{j\in J}\sum_{i\in I}a_{i,j}
\]
\end{fact}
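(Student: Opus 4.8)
The plan is to route everything through the notion of an \emph{unordered sum}, so that no enumeration needs to be chosen until the very end. For an arbitrary index set $K$ and a function $b\colon K\to[0,\infty]$, define
\[
\Sigma_K(b)\;\defeq\;\sup\left\{\,\sum_{k\in E}b_k\;:\;E\subseteq K\text{ finite}\,\right\}\;\in\;[0,\infty].
\]
First I would record the elementary fact that when $K$ is countable, $\Sigma_K(b)$ equals the ordinary value of $\sum_{k\in K}b_k$ computed under any enumeration of $K$: this is trivial when $K$ is finite, and when $K$ is countably infinite, writing $K=\set{k_1,k_2,\dots}$, every initial partial sum $\sum_{m\le n}b_{k_m}$ is one of the finite partial sums in the supremum, so the (monotone) limit of the partial sums is at most $\Sigma_K(b)$, while conversely any finite $E\subseteq K$ is contained in some initial segment $\set{k_1,\dots,k_n}$, so $\sum_{k\in E}b_k\le\sum_{m\le n}b_{k_m}$ and hence $\Sigma_K(b)$ is at most that limit. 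In particular $\sum_{k\in K}b_k$ does not depend on the enumeration, and the whole statement will follow once I show that both iterated sums equal $\Sigma_{I\times J}(a)$.

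The core step is a partition lemma: if $K=\bigsqcup_{i\in I}K_i$ is a partition of $K$ into countably many (possibly empty) countable blocks indexed by a countable set $I$, then $\Sigma_K(b)=\Sigma_I\big(i\mapsto\Sigma_{K_i}(b|_{K_i})\big)$. The inequality ``$\le$'' is immediate: any finite $E\subseteq K$ meets only the finitely many blocks with index in $F\defeq\set{i\in I:E\cap K_i\ne\emptyset}$, so $\sum_{k\in E}b_k=\sum_{i\in F}\sum_{k\in E\cap K_i}b_k\le\sum_{i\in F}\Sigma_{K_i}(b|_{K_i})\le\Sigma_I\big(i\mapsto\Sigma_{K_i}(b|_{K_i})\big)$, and taking the supremum over all finite $E$ gives the bound. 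For ``$\ge$'' I would fix a finite $F\subseteq I$; if $\Sigma_{K_i}(b|_{K_i})=\infty$ for some $i\in F$ then already $\Sigma_K(b)\ge\Sigma_{K_i}(b|_{K_i})=\infty$, and otherwise, given $\delta>0$, for each $i\in F$ pick a finite $E_i\subseteq K_i$ with $\sum_{k\in E_i}b_k\ge\Sigma_{K_i}(b|_{K_i})-\delta/\abs{F}$, so that $E\defeq\bigsqcup_{i\in F}E_i$ is a finite subset of $K$ with $\sum_{k\in E}b_k=\sum_{i\in F}\sum_{k\in E_i}b_k\ge\sum_{i\in F}\Sigma_{K_i}(b|_{K_i})-\delta$; letting $\delta\downarrow0$ and then taking the supremum over finite $F\subseteq I$ yields ``$\ge$''.

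Finally I would apply the partition lemma to $K=I\times J$, which is countable as a product of countable sets, in the two obvious ways. Partitioning $K=\bigsqcup_{i\in I}\big(\set{i}\times J\big)$ and noting (by the first step, since $\set{i}\times J$ is in bijection with $J$) that $\Sigma_{\set{i}\times J}(a|_{\set{i}\times J})=\sum_{j\in J}a_{i,j}$ gives $\Sigma_{I\times J}(a)=\sum_{i\in I}\sum_{j\in J}a_{i,j}$; partitioning instead as $K=\bigsqcup_{j\in J}\big(I\times\set{j}\big)$ gives $\Sigma_{I\times J}(a)=\sum_{j\in J}\sum_{i\in I}a_{i,j}$. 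Equating the two expressions proves the identity. The only delicate point in the whole argument is the ``$\ge$'' half of the partition lemma — in essence the assertion that a finite sum of suprema equals the supremum of the finite sums, performed in the extended nonnegative reals so that $+\infty$ summands cause no trouble — and that is exactly the $\delta/\abs{F}$ approximation above; the rest is bookkeeping with monotone limits.
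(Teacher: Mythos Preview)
Your proof is correct. The paper, however, does not actually give a self-contained argument here: it simply remarks that the identity can be shown either by elementary analysis (enumerating $I$ and $J$ by $\N$ and using limits of partial sums) or as an immediate corollary of Tonelli's theorem with counting measure. Your approach is genuinely different in that you work through the unordered-sum formalism and prove a partition lemma to make both iterated sums equal the single supremum $\Sigma_{I\times J}(a)$. This buys you a fully elementary, enumeration-independent proof that avoids invoking Tonelli, at the cost of more bookkeeping; the paper's route is shorter but leans on measure-theoretic machinery (or leaves the elementary version as an exercise). Both are standard and valid.
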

\begin{proof}
This can be proved directly via basic analysis methods if $I$ and $J$ are assumed to be $\N$ and the definition of the infinite sum as a limit of finite sums is used. Alternatively, viewing the summation as an integral over a countable measure space, this can be viewed as an immediate corollary to Tonelli's theorem.
\end{proof}


\begin{lemma}[Exact Measure of Multiplicity]\label{exact-measure-of-multiplicity}
Let $n\in\N$. Let $X$ be a measurable set in some measure space (the measure being denoted by $\mu$) and $\A$ a countable family of measurable subsets of $X$ such that for each $x\in X$, $x$ belongs to {\em exactly} $n$ members of $\A$. Then
\[
\sum_{A\in\A}\mu(A) = n\cdot\mu(X).
\]
\end{lemma}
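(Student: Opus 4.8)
The plan is to reduce the identity to an interchange of summation and integration via indicator functions. First I would observe that for each $A\in\A$, since $A\subseteq X$ is measurable, $\mu(A)=\int_X \mathbf 1_A\,d\mu$ where $\mathbf 1_A$ is the indicator function of $A$. Summing this over the countable family $\A$ gives
\[
\sum_{A\in\A}\mu(A) \;=\; \sum_{A\in\A}\int_X \mathbf 1_A\,d\mu.
\]
The crux is to pull the summation inside the integral to rewrite the right-hand side as $\int_X\big(\sum_{A\in\A}\mathbf 1_A\big)\,d\mu$. Once that is in hand, the hypothesis that every $x\in X$ belongs to \emph{exactly} $n$ members of $\A$ is precisely the statement that the (measurable) function $\sum_{A\in\A}\mathbf 1_A$ equals the constant $n$ pointwise on $X$, so the integral evaluates to $\int_X n\,d\mu = n\cdot\mu(X)$, which is the claim.

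The main obstacle — indeed the only nontrivial point — is justifying the interchange of the possibly countably infinite sum with the integral. Here I would use non-negativity crucially: enumerate $\A$ (using countability) and note that the partial sums $\sum_{A\in\A'}\mathbf 1_A$ over finite subfamilies $\A'$ exhausting $\A$ form a monotone increasing sequence of non-negative measurable functions converging pointwise to $\sum_{A\in\A}\mathbf 1_A$. The monotone convergence theorem then yields both the measurability of the limiting function and the desired equality $\sum_{A\in\A}\int_X\mathbf 1_A\,d\mu=\int_X\sum_{A\in\A}\mathbf 1_A\,d\mu$; equivalently, one may invoke Tonelli's theorem for the product of $(X,\mu)$ with counting measure on $\A$, exactly as in the proof of \Autoref{interchange-of-countable-sums}. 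The finite-family case needs only finite additivity of the integral, so no convergence theorem is required when $\A$ is finite.

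Finally I would remark that the argument is insensitive to whether $\mu(X)$ is finite: if $n\ge 1$ and $\mu(X)=\infty$ then both sides are $\infty$, while if $n=0$ then every $A\in\A$ must be empty (any point of $A$ would lie in at least one member), so both sides are $0$; in all cases the equality holds as an identity in $[0,\infty]$. This lemma will then feed directly into \Autoref{upper-bound-measure-of-multiplicity} and ultimately into the proof of \numberfullref{lower-bound-cover-number-Rd}.
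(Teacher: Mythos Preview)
Your proof is correct and is in fact the slicker, more standard argument: write $\sum_{A\in\A}\mu(A)=\int_X\sum_{A\in\A}\mathbf 1_A\,d\mu$ via monotone convergence (or Tonelli), observe that the integrand is the constant $n$, and conclude. The paper, however, takes a deliberately more elementary route that avoids integration entirely. It partitions $X$ into the pieces $G_{\FF}=\bigcap_{A\in\FF}A$ indexed by $\FF\in\binom{\A}{n}$ (these are pairwise disjoint and cover $X$ by the ``exactly $n$'' hypothesis), decomposes each $\mu(A)$ as $\sum_{\FF}\mu(A\cap G_{\FF})$ by countable additivity, interchanges the two countable sums of non-negative terms (\Autoref{interchange-of-countable-sums}), and then uses that $A\cap G_{\FF}$ is $G_{\FF}$ or $\emptyset$ according as $A\in\FF$ or not, so the inner sum contributes $n\cdot\mu(G_{\FF})$.

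What each approach buys: yours is shorter and conceptually transparent---the sum of indicators \emph{is} the multiplicity function---but it invokes the monotone convergence theorem. The paper's argument needs only countable additivity of $\mu$ and the purely arithmetic fact that non-negative double series can be interchanged, so it stays within the most basic measure axioms; this is consistent with the appendix's goal of giving a self-contained proof of the folklore \nameref{lower-bound-cover-number-Rd} from first principles.
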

\begin{proof}
We note that if $n=0$, then the statement is trivially true because $\A$ is either empty or contains just the empty set, and in either case $\sum_{A\in\A}\mu(A) = 0 = 0\cdot\mu(X)$ if we use the standard convention that the empty sum is $0$.

For any $\FF\subseteq\A$, let $G_{\FF}=\bigcap_{A\in\FF}A$ noting that this is a countable intersection of measurable sets, so it is measurable (mnemonically, the $G$ represents an intersection as it does in the notation for $G_{\delta}$ sets).

Let $\binom\A n$ denote all subsets of $\A$ of size $n$ noting that because $\A$ is countable, so is $\binom\A n$. Observe that for distinct $\FF,\FF'\in\binom\A n$, the sets $G_{\FF}$ and $G_{\FF'}$ are disjoint because
\[
    G_{\FF}\cap G_{\FF'} = \left(\bigcap_{A\in\FF}A\right) \cap \left(\bigcap_{A\in\FF'}A\right) = \bigcap_{A\in\FF\cup\FF'}A
\]
and since $\FF$ and $\FF'$ are distinct and each contain $n$ items, $\abs{\FF\cup\FF'}\geq n+1$, and by assumption no point in $X$ belongs to $n+1$ members, so $\bigcap_{A\in\FF\cup\FF'}A=\emptyset$. Furthermore, for each $x\in X$, since $x$ belongs to exactly $n$ members $A_1,\ldots,A_n$ of $\A$, taking $\FF=\set{A_1,\ldots,A_n}$ we have $x\in G_{\FF}$ which shows that $\set{G_{\FF}\colon \FF\in\binom\A n}$ is a partition of $X$ into countably many measurable sets (allowing that some $G_{\FF}$ might be empty).

The last observation we need is that for any $\FF\in\binom\A n$ and any $A\in\A$, it holds that if $A\in\FF$, then $A\supseteq G_{\FF}$ and if $A\not\in\FF$ then $A\cap G_{\FF}=\emptyset$. To see this, note that for any $x\in G_{\FF}$, $x$ belongs to each of the $n$ members of $\FF\subseteq\A$, and since by assumption $x$ belongs to exactly $n$ members of $\A$, it does not belong to any other members of $\A$.

Now we have the following chain of equalities:
\begingroup
\allowdisplaybreaks
\begin{align*}
    \sum_{A\in\A}\mu(A) &= \sum_{A\in\A}\mu(A\cap X) \tag{$A\subseteq X$ so $A\cap X=A$} \\
    &= \sum_{A\in\A}\mu\left(A\cap\left[\bigsqcup_{\FF\in\binom\A n}G_{\FF}\right]\right) \tag{Set equality; the $G_{\FF}$ partition $X$} \\
    %
    &= \sum_{A\in\A}\mu\left(\bigsqcup_{\FF\in\binom\A n}\left[A\cap G_{\FF}\right]\right) \tag{Set equality} \\
    %
    &= \sum_{A\in\A}\left[\sum_{\FF\in\binom\A n}\mu\left(A\cap G_{\FF}\right)\right] \tag{Countable additivity of measures} \\
    &= \sum_{\FF\in\binom\A n}\left[\sum_{A\in\A}\mu\left(A\cap G_{\FF}\right)\right] \tag{Interchange sums by \Autoref{interchange-of-countable-sums}} \\
    &= \sum_{\FF\in\binom\A n}\left[\sum_{A\in\A}
    \begin{cases}
        \mu\left(A\cap G_{\FF}\right) = \mu\left(G_{\FF}\right) & A\in\FF \\
        \mu\left(A\cap G_{\FF}\right) = \mu(\emptyset) = 0 & A\not\in\FF
    \end{cases}
    \right] \tag{Previous paragraph} \\
    &= \sum_{\FF\in\binom\A n}\left[\sum_{A\in\FF}\mu\left(G_{\FF}\right)\right] \tag{Remove $0$ terms from summation} \\
    &= \sum_{\FF\in\binom\A n}\left[n\cdot\mu\left(G_{\FF}\right)\right] \tag{$\abs{\FF}=n$} \\
    &= n\sum_{\FF\in\binom\A n}\mu\left(G_{\FF}\right) \tag{Linearity of summation} \\
    &= n\cdot\mu\left(\bigsqcup_{\FF\in\binom\A n}G_{\FF}\right) \tag{Countable additivity of measures} \\
    &= n\cdot\mu\left(X\right) \tag{Set equality; the $G_{\FF}$ partition $X$} \\
\end{align*}
\endgroup
This proves the result.
\end{proof}


\begin{lemma}[Upper Bound Measure of Multiplicity]\label{upper-bound-measure-of-multiplicity}
Let $n\in\N$. Let $X$ be a measurable set in some measure space (the measure being denoted by $\mu$) and $\A$ a countable family of measurable subsets of $X$ such that for each $x\in X$, $x$ belongs to {\em at most} $n$ members of $\A$. Then
\[
\sum_{A\in\A}\mu(A) \leq n\cdot\mu(X).
\]
\end{lemma}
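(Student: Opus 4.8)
The plan is to reduce to \Autoref{exact-measure-of-multiplicity} by ``padding'' the family $\A$ with extra sets so that every point of $X$ is covered \emph{exactly} $n$ times. First I would introduce the multiplicity function $f\colon X\to\set{0,1,\ldots,n}$ defined by $f(x)=\abs{\set{A\in\A\colon x\in A}}$. Since $\A$ is countable, $f=\sum_{A\in\A}\mathbf{1}_A$ is a countable sum of non-negative measurable functions and hence measurable, and it is bounded by $n$ by hypothesis. Consequently, for each $k\in\set{0,1,\ldots,n}$ the level set $X_k\defeq f^{-1}(\set{k})$ is a measurable subset of $X$, and the sets $X_0,X_1,\ldots,X_n$ partition $X$.

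Next I would form the enlarged \emph{indexed} family $\A'$ consisting of all the members of $\A$ together with, for each $k\in\set{0,1,\ldots,n}$, exactly $n-k$ additional copies of the set $X_k$. This is still a countable family of measurable subsets of $X$, and every point $x\in X$ belongs to exactly $n$ members of $\A'$: if $f(x)=k$, then $x$ lies in exactly $k$ members of the original $\A$ and in exactly the $n-k$ appended copies of $X_k$ (and in no copy of $X_{k'}$ for $k'\neq k$, since the level sets are disjoint), for a total of $n$. Applying \Autoref{exact-measure-of-multiplicity} to $\A'$ then yields $\sum_{A\in\A'}\mu(A)=n\cdot\mu(X)$.

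Finally, since $\A$ is a subfamily of $\A'$ and every term $\mu(A)$ is non-negative, $\sum_{A\in\A}\mu(A)\leq\sum_{A\in\A'}\mu(A)=n\cdot\mu(X)$, which is the desired conclusion. The only points requiring mild care are that $\A'$ must be treated as an indexed family, so that the $n-k$ copies of each $X_k$ are genuinely counted with multiplicity even when $X_k$ is empty or already occurs in $\A$, and the measurability of the level sets $X_k$; neither is a real obstacle, so I do not anticipate a hard step in this argument.
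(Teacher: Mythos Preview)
Your argument is correct. Both you and the paper partition $X$ into the level sets of the multiplicity function (the paper writes $S_k$ for your $X_k$) and then reduce to \Autoref{exact-measure-of-multiplicity}, but the reductions differ. The paper restricts the family to each level set, applying \Autoref{exact-measure-of-multiplicity} once per $k$ to the family $\A_k=\set{A\cap S_k: A\in\A}$ to obtain $\sum_{A\in\A}\mu(A\cap S_k)=k\cdot\mu(S_k)$, then sums over $k$ and uses $k\le n$. You instead pad $\A$ with $n-k$ extra copies of each $X_k$ so that the enlarged family covers every point exactly $n$ times, apply \Autoref{exact-measure-of-multiplicity} once globally, and drop the non-negative padding terms. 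Your route is a bit slicker (a single invocation of the lemma rather than $n+1$), while the paper's route makes the intermediate equality $\sum_{A\in\A}\mu(A)=\sum_k k\,\mu(S_k)$ explicit, which is mildly more informative. The indexed-family caveat you flag is apt and is equally present in the paper's own application of \Autoref{exact-measure-of-multiplicity} to $\A_k$, so it is not a weakness specific to your approach.
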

\begin{proof}
As in the last proof, for any $\FF\subseteq\A$, let $G_{\FF}=\bigcap_{A\in\FF}A$ noting that this is a countable intersection of measurable sets, so it is measurable (mnemonically, the $G$ represents an intersection as it does in the notation for $G_{\delta}$ sets). And for any $k\in[n]\cup\set{0}$, let $\binom\A k$ denote all subsets of $\A$ of size $k$ noting that because $\A$ is countable, so is $\binom\A k$.

For each $k\in[n]\cup\set{0}$, let
\begin{align*}
    S_k &= \set{x\in X\colon \text{$x$ belongs to {\em exactly} $k$ members of $\A$}} \\
    S_k' &= \set{x\in X\colon \text{$x$ belongs to {\em at least} $k$ members of $\A$}}
\end{align*}
We will show that $S_k$ and $S_k'$ are measurable.

To show that the $S_k'$ are measurable, note that for any $k\in[n]\cup\set{0}$, $S_k'$ can be expressed as $S_k' = \bigcup_{\FF\in\binom\A k}G_{\FF}$. This is because for any $x\in X$, if $x$ belongs to at least $k$ members of $\A$, then there is a subset $\FF\subseteq\A$ with $\abs{\FF}=k$ such that $x\in\bigcap_{A\in\FF}A=G_{\FF}$. Conversely, if $x\in\bigcap_{A\in\FF}A=G_{\FF}$, then there is some $\FF\in\binom\A k$ (i.e. some $\FF\subseteq\A$ with $\abs{\FF}=k$) such that $x\in G_{\FF}=\bigcap_{A\in\FF}A$, so $x$ belongs to at least $k$ members of $\A$. Thus, since $\A$ is countable, so is $\binom\A k$ (for each $k$) implying that each $S_k'$ is a countable union of measurable sets, so is itself measurable.

To show the measurability of each $S_k$, first consider $k=n$. Observe that $S_n=S_n'$ because by assumption each $x\in X$ belongs to at most $n$ members of $\A$, so it belongs to exactly $n$ members if and only if it belongs to at least $n$ members. Thus, $S_n$ is also measurable.

Now for $k\in[n-1]\cup\set{0}$ observe that $S_k=S_k'\setminus S_{k+1}'$ because some $x\in X$ belongs to exactly $k$ members of $\A$ if and only if it belongs to at least $k$ members and does not belong to at least $k+1$ members of $\A$. Thus, for $k\in[n-1]\cup\set{0}$, $S_k$ is the set difference of two measurable sets, so is itself measurable.

Finally, note that $\set{S_k\colon k\in[n]\cup\set{0}}$ is a partition of $X$ (allowing the possibility that some $S_k$ are empty) because every point of $x$ belongs to some number of members of $\A$, and that number is (by assumption) between $0$ and $n$ inclusive.

Now we have the following chain of inequalities:
\begingroup
\allowdisplaybreaks
\begin{align*}
    \sum_{A\in\A}\mu(A) &= \sum_{A\in\A}\mu(A\cap X) \tag{$A\subseteq X$ so $A\cap X=A$} \\
    &= \sum_{A\in\A}\mu\left(A\cap\left[\bigsqcup_{k\in[n]\cup\set{0}}S_k\right]\right) \tag{Set equality; the $S_k$ partition $X$} \\
    &= \sum_{A\in\A}\mu\left(\bigsqcup_{k\in[n]\cup\set{0}}\left[A\cap S_k\right]\right) \tag{Set equality} \\
    &= \sum_{A\in\A}\left[\sum_{k\in[n]\cup\set{0}}\mu\left(A\cap S_k\right)\right] \tag{Countable additivity of measures} \\
    &= \sum_{k\in[n]\cup\set{0}}\left[\sum_{A\in\A}\mu\left(A\cap S_k\right)\right] \tag{Interchange sums by \Autoref{interchange-of-countable-sums}} \\
    &= \sum_{k\in[n]\cup\set{0}}\left[k\cdot\mu\left(S_k\right)\right] \tag{By \Autoref{exact-measure-of-multiplicity}; see details below} \\
    &= \sum_{k\in[n]}\left[k\cdot\mu\left(S_k\right)\right] \tag{$k=0$ term is $0$} \\
    &\leq \sum_{k\in[n]}\left[n\cdot\mu\left(S_k\right)\right] \tag{$k\leq n$} \\
    &= n\sum_{k\in[n]}\left[\mu\left(S_k\right)\right] \tag{Linearity of summation} \\
    &= n\cdot\mu\left(\bigsqcup_{k\in[n]}S_k\right) \tag{Countable additivity of measures} \\
    &\leq n\cdot\mu\left(X\right) \tag{Set inequality; the $S_k$ partition $X$, but $S_0$ is missing from the union}
\end{align*}
\endgroup

After justifying the use of \Autoref{exact-measure-of-multiplicity}, this completes the proof. For each $k\in[n]\cup\set{0}$, let $X_k=S_k$ and $\A_k=\set{A\cap S_k\colon A\in\A}$. Then observe that for each $x\in X_k=S_k$, by the definition of $S_k$, $x$ belongs to exactly $k$ members of $\A$, and thus (since it also belongs to $S_k$) belongs to exactly $k$ members of $\A_k$. Applying \Autoref{exact-measure-of-multiplicity} once for each $k$ with $X=X_k$ and $\A=\A_k$ shows that 
\[
 \sum_{A\in\A}\mu(A\cap S_k) = \sum_{A'\in \A_k}\mu(A') = k\cdot\mu(X_k) = k\cdot\mu(S_k)
\]
(the middle equality is where \Autoref{exact-measure-of-multiplicity} was applied). This is what we claimed in the long chain of equalities above and completes the proof.
\end{proof}


\begin{restatable}[Lower Bound Cover Number]{corollary}{restatableLowerBoundCoverNumber}\label{lower-bound-cover-number}
\renewcommand{\A}{\mathcal{A}}
Let $X$ be a measurable set in some measure space (the measure being denoted by $\mu$) such that $0<\mu(X)<\infty$. Let $\A$ be a countable family of measurable subsets of $X$ such that $\sum_{A\in\A}\mu(A)<\infty$. Then there exists $x\in X$ such that $x$ belongs to at least $\ceil{\frac{\sum_{A\in\A}\mu(A)}{\mu(X)}}$-many members of $\A$.
\end{restatable}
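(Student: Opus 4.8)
The plan is to argue by contradiction, using \Autoref{upper-bound-measure-of-multiplicity} essentially off the shelf. Write $k=\ceil{\frac{\sum_{A\in\A}\mu(A)}{\mu(X)}}$, and observe first that $k$ is a well-defined non-negative integer: the numerator is finite by hypothesis and the denominator is positive, so the quotient is a finite non-negative real and its ceiling is a finite integer. If $k=0$ the statement is trivial, since $X$ is non-empty (as $\mu(X)>0$) and every point of $X$ vacuously belongs to at least $0$ members of $\A$; so from now on assume $k\geq 1$, which ensures $k-1\in\N\cup\set{0}$.

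Suppose, toward a contradiction, that \emph{no} point of $X$ belongs to $k$ or more members of $\A$; equivalently, every $x\in X$ belongs to \emph{at most} $k-1$ members of $\A$. Since $\A$ is a countable family of measurable subsets of $X$ and $k-1\in\N\cup\set{0}$, \Autoref{upper-bound-measure-of-multiplicity} applies with $n=k-1$ and gives
\[
\sum_{A\in\A}\mu(A)\;\leq\;(k-1)\cdot\mu(X).
\]
On the other hand, the defining property of the ceiling yields the \emph{strict} inequality $k-1<\frac{\sum_{A\in\A}\mu(A)}{\mu(X)}$ (note $\ceil{t}-1<t$ for every real $t$, including integer $t$), and since $\mu(X)>0$ we may multiply through by $\mu(X)$ to obtain $(k-1)\cdot\mu(X)<\sum_{A\in\A}\mu(A)$. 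Chaining this with the displayed inequality gives $\sum_{A\in\A}\mu(A)<\sum_{A\in\A}\mu(A)$, which is absurd. Hence the assumption fails and there exists $x\in X$ belonging to at least $k=\ceil{\frac{\sum_{A\in\A}\mu(A)}{\mu(X)}}$ members of $\A$, as claimed.

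I do not expect a genuine obstacle here: all the combinatorial and measure-theoretic substance has already been extracted into \Autoref{exact-measure-of-multiplicity} and \Autoref{upper-bound-measure-of-multiplicity}, and the corollary is just the contrapositive of the latter dressed up with ceiling-function bookkeeping. The only two places deserving a moment of attention are (i) checking that $k$ is finite, so that "every point is in at most $k-1$ sets" is a legitimate instantiation of the hypothesis of \Autoref{upper-bound-measure-of-multiplicity} (in particular ruling out the degenerate behaviour where the quotient could be interpreted as infinite), and (ii) invoking the strict inequality $\ceil{t}-1<t$ rather than $\ceil{t}-1\le t$, which is exactly what turns the non-strict $\leq$ supplied by the lemma into a contradiction; this is also why the floor would \emph{not} work in place of the ceiling here.
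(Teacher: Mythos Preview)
Your proof is correct and follows essentially the same route as the paper's: both argue by contradiction, set $n=k-1$ (your $k$ is the paper's $\ceil{\cdot}$), invoke \Autoref{upper-bound-measure-of-multiplicity}, and use the strict inequality $\ceil{t}-1<t$ together with $0<\mu(X)<\infty$ to derive the contradiction. Your explicit handling of the $k=0$ case is a small bit of extra care the paper leaves implicit, but otherwise the arguments coincide.
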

\begin{proof}

First observe that by hypothesis, $\ceil{\frac{\sum_{A\in\A}\mu(A)}{\mu(X)}}$ is finite. Suppose for contradiction that each $x\in X$ belongs to strictly less than $\ceil{\frac{\sum_{A\in\A}\mu(A)}{\mu(X)}}$-many members of $\A$. Let $n=\ceil{\frac{\sum_{A\in\A}\mu(A)}{\mu(X)}}-1$ (noting that $n<\frac{\sum_{A\in\A}\mu(A)}{\mu(X)}$). Then each $x\in X$ belongs to at most $n$-many members of $A$, so we have
\begin{align*}
    \sum_{A\in\A}\mu(A) &\leq n\cdot\mu(X) \tag{\Autoref{upper-bound-measure-of-multiplicity}} \\
    &< \frac{\sum_{A\in\A}\mu(A)}{\mu(X)}\mu(X) \tag{$0<\mu(X)<\infty$ and $n<\frac{\sum_{A\in\A}\mu(A)}{\mu(X)}$} \\
    &= \sum_{A\in\A}\mu(A)
\end{align*}
which is a contradiction.
\end{proof}

\begin{remark}\label{infinite-cover-case}
In \Autoref{lower-bound-cover-number} above, it was important that we required $\sum_{A\in\A}\mu(A)$ to be finite. If we allowed it to be infinite, then the claim would have been that there was some $x\in X$ belonging to infinitely many members of $\mathcal{A}$, but this is in general not true (see \Autoref{harmonic-cover-example} below). Nonetheless, it is true (and a straightforward corollary of the above) that if $\sum_{A\in\A}\mu(A)=\infty$, then for any $n\in\N_0$, there exists a point $x_n\in X$ that is contained in at least $n$-many sets of $\A$. The distinction is that this point might have to depend on the choice of $n$.
\end{remark}

\begin{temporary}[Harmonic Cover of Open Unit Interval]\label{harmonic-cover-example}
Let $X=(0,1)$ be equipped with the Borel or Lebesgue measure $\mu$. Let $\mathcal{A}=\set{(0,\tfrac1i): i\in\N}$. Then $\sum_{A\in\A}\mu(A)=\sum_{i\in\N}\tfrac1i=\infty$. For any $n\in\N$, we can consider the point $x_n=\tfrac1{n+1}$ which is contained in $(0,\tfrac1i)$ for $i\in[n]$ and not for any other $i$, so it belongs to exactly $n$ sets in $\A$.

However, no point in $X$ belongs to infinitely many sets in $\A$. To see this, consider an arbitrary point $x\in X=(0,1)$. Then for sufficiently large $i\in\N$, $x\not\in(0,\tfrac1i)$ so $x$ belongs to only finitely many members of $\A$.
\end{temporary}

The prior three results have been stated in typical measure theory notation, but in the body of the paper we present \Autoref{lower-bound-cover-number} as follows for $\R^d$ specifically with notation matching what is used elsewhere in the paper.

\hypertarget{hypertarget-lower-bound-cover-number-Rd}{}
\restatableLowerBoundCoverNumberRd
\begin{proof}
This follows trivially from \Autoref{lower-bound-cover-number} and \Autoref{infinite-cover-case}.
\end{proof}


\end{document}